  \providecommand\BibTeX{{%
    \normalfont B\kern-0.5em{\scshape i\kern-0.25em b}\kern-0.8em\TeX}}}
\newtheorem{Problem}{Problem}
\newtheorem{Theorem}{Theorem}
\newtheorem{Proposition}{Proposition}
\newtheorem{Lemma}{Lemma}
\newtheorem{Corollary}{Corollary}
\newtheorem{Algorithm}{Algorithm}
\newtheorem{Definition}{Definition}
\newtheorem{Example}{Example}
\newcommand{\uproman}[1]{\uppercase\expandafter{\romannumeral#1}}
\pgfplotsset{compat=1.13}
\def\clap#1{\hbox to0pt{\hss#1\hss}}
\def\eatspace#1{#1}
\def\step#1#2{\par\kern1pt\dimen44=#2em\advance\dimen44 1.67em\hangindent\dimen44\hangafter=1\noindent\rlap{\small#1}\kern\dimen44\relax\eatspace}
\let\set\mathbb
\def\<#1>{\langle#1\rangle}
\def\K{\set K}
\DeclareMathOperator{\sep}{sep}
\def\diag{\operatorname{diag}}
\def\val{\operatorname{val}}
\def\clap#1{\hbox to0pt{\hss#1\hss}}
  \providecommand\BibTeX{{%
    Bib\TeX}}}
\begin{document}

\title{Separating Variables in Bivariate Polynomial Ideals: the Local Case}

\date{}

\author{Manfred Buchacher}

\maketitle

\section{Abstract}
We present a semi-algorithm which for any irreducible $p\in\mathbb{K}[x,y]$ finds all elements of $\mathbb{K}(x) + \mathbb{K}(y)$ that are of the form $qp$ for some $q\in\mathbb{K}(x,y)$ whose denominator is not divisible by $p$.

\section{Introduction}

The following is the continuation of the work on an elimination problem that was started in~\cite{buchacher2020separating}. It discussed how $I \cap \left( \mathbb{K}[x] + \mathbb{K}[y]\right)$ can be determined when $I$ is an ideal of $\mathbb{K}[x,y]$. This article extends the ideas presented therein to ideals of the local ring of $\mathbb{K}[x,y]$ at an irreducible $p\in\mathbb{K}[x,y]$. It explains how to find all elements of $\mathbb{K}(x) + \mathbb{K}(y)$ that are of the form $qp$ for some $q\in\mathbb{K}(x,y)$ whose denominator is not a multiple of $p$. In contrast to~\cite{buchacher2020separating}, the procedure presented here is just a semi-algorithm. It terminates if $p$ has a non-trivial rational multiple in $\mathbb{K}(x) + \mathbb{K}(y)$, but it may not, when there is none. Termination depends on a dynamical system on the curve defined by $p$, and the finiteness of the orbit of a point thereon. 

The starting point of this work was a problem in enumerative combinatorics and the enumeration of restricted lattice walks. The question of how many there are can often be approached by considering their generating function and studying the functional equation they satisfy. A systematic study of equations that arise in this context, so-called discrete differential equations, was initiated in~\cite{bousquet2010walks, mishna2009classifying} and has received a lot of attention since then. We refer to~\cite{dreyfus2018nature, dreyfus2020walks, dreyfus2024enumeration} and the references therein for an overview of the relevant literature. In~\cite{bernardi2020counting, bousquet2023enumeration} it was explained how certain partial discrete differential equations can be reduced to ordinary discrete differential equations~\cite{bousquet2006polynomial}. The reduction relies on the existence of certain rational functions, so-called invariants and decoupling functions. Whether they exist, and in case they do, how to construct them, are therefore important questions. This work provides an answer on how to determine invariants. The construction of decoupling functions is discussed in~\cite{buchacher2024separated}. Motivated by the same line of research, (partial) answers on these questions can also be found in~\cite{bonnet2024galoisian, hardouin2021differentially}. 

Further applications are found in computer vision~\cite{binder2009algorithms}, the problem of parameter identification for ODE models~\cite{ovchinnikov2021computing}, and the design of diffractive optical systems~\cite{muller1998algorithmic},.

The problem this work is about has a field theoretic interpretation. By abuse of notation, let $\mathbb{K}(x,y)$ be the field that is generated by $x$ and $y$ over~$\mathbb{K}$, and assume that $p(x,y) = 0$ is the only relation satisfied by them. Let $\mathbb{K}(x)$ and $\mathbb{K}(y)$ be the subfields generated by $x$ and $y$, respectively. Their intersection $\mathbb{K}(x) \cap \mathbb{K}(y)$ might be different from $\mathbb{K}$, and one may ask what it is. Its computation amounts to finding all relations of the form $q(x,y) p(x,y) = f(x) - g(y)$, where $f$ and $g$ are rational functions that depend only on $x$ and $y$, respectively, and $q$ is a rational function, whose denominator is not a multiple of $p$.

 A semi-algorithm for computing intersections of intermediate fields of $\mathbb{K}$ and $\mathbb{K}(x_1,\dots,x_n)$ based on Gr{\"o}bner bases~\cite{buchberger2006bruno} is presented in~\cite[Algorithm~2.38]{binder2009algorithms}. It is shown~\cite[p. 37-38]{binder2009algorithms} to terminate when the fields are algebraically closed in $\mathbb{K}(x_1,\dots,x_n)$. It is shown in~\cite{ovchinnikov2021computing} that the condition can be relaxed. It is sufficient to require only one of the fields to be algebraically closed in the ambient rational function field~\cite[Proposition~27]{ovchinnikov2021computing}. An algorithm when the fields are linearly disjoint over their intersection is presented in~\cite{steinwandt2000freeness}. It follows a similar approach, again relying on Gr{\"o}bner bases computations. 

Intersections of algebraically closed fields are studied in~\cite{ash1986intersections} using methods from mathematical logic. An algorithm for their computation is sketched in~\cite[Theorem~2.3]{ash1986intersections}.


A characterization of when the intersection of two fields is different from their coefficient field is discussed in~\cite{fried1978poncelet}. It is related to the finiteness of correspondences between algebraic varieties~\cite[Theorem~1]{fried1978poncelet}. The arguments are geometric and Galois theoretic.

A semi-algorithm for computing $\mathbb{K}(x) \cap \mathbb{K}(y)$ given that $p(x,y) = 0$ is presented in~\cite{bonnet2024galoisian}. Similar as in~\cite{fried1978poncelet}, it characterizes the non-triviality of the intersection in terms of the finiteness of a correspondence. Furthermore, it explains how to construct a generator of the intersection when the correspondence is finite. It relies on Galois theoretic arguments and a constructive version of L{\"u}roth's theorem~\cite[Theorem~6.66]{rotman2015advanced}.

The particular problem of how to compute in the lattice of polynomial rational function fields is explained in~\cite{binder1996fast}. The approach is elementary. 

The present work can be seen as part of the more general problem of how to answer field theoretic questions effectively and efficiently~\cite{muller1999basic, muller2000grobner, kemper1993algorithm, sweedler1993using}. Some problems in this context have not yet found an algorithmic solution. And the solutions to others are computationally expensive. The present paper addresses the open problem of computing intersections of fields and proposes a linearization of the problem based on tropical~\cite{maclagan2015introduction} and Galois theoretic arguments. Though the procedure presented is just a semi-algorithm that solves the problem in its simplest form, we believe that the underlying ideas and arguments will be helpful to solve the problem in full generality. We also believe that they will be useful in answering other field theoretic questions such as, for instance, the field membership problem or more general decomposition problems~\cite{buchacher2024separated, bonnet2024galoisian}.

Polynomials and rational functions of the form $f(x) - g(y)$ have received considerable attention with regard to their reducibility~\cite{fried1973field, fried1987irreducibility,fried1974arithmetical, fried1973theorem, fried1980exposition, fried1999variables}, see also~\cite{geyer1994irreducibility}, or their solvability over the rational numbers, see~\cite{bilu2000diophantine}, the survey paper~\cite{kreso2015functional} and the references therein.

The paper is organized as follows. In Section~\ref{section:problem} we make precise what this article is about and give two different but equivalent formulations of the problem. In Section~\ref{section:hom} we explain how it can be solved for the particular case when $p$ is homogenous. And in Section~\ref{section:reduction} we show how the general case reduces to the homogenous one. The paper closes with Section~\ref{section:questions} and an open question on how the semi-algorithm could be turned into an algorithm.

This paper comes with an implementation in Mathematica. It can be found on https://github.com/buchacm/nearSeparation.git.

\section{Problem}\label{section:problem}

We assume throughout that $\mathbb{K}$ is an algebraically closed field of characteristic $0$. We denote by $\mathbb{K}[x,y]$ the ring of polynomials in $x$ and $y$ over $\mathbb{K}$, and we write $\mathbb{K}(x,y)$ for its quotient field. Given a rational function $r\in\mathbb{K}(x,y)$ in reduced form, we write $r_n$ and $r_d$ for its numerator and denominator, respectively. Conversely, given two coprime polynomials $r_n, r_d\in\mathbb{K}[x,y]$, we denote by $r$ their quotient $r_n/r_d$. Given $p\in\mathbb{K}[x]$, we denote by $\deg p$ its degree, and we write $\val p$ for its valuation, i.e. for the degree of the lowest order term of $p$.

\begin{Definition}
Let $p$ be an irreducible polynomial of $\mathbb{K}[x,y]\setminus\left( \mathbb{K}[x] \cup \mathbb{K}[y]\right)$. We write $\mathbb{K}[x,y]_p$ for the set of rational functions of $\mathbb{K}(x,y)$ whose denominator is not divisible by $p$. It is closed under the addition and multiplication of rational functions, and hence forms a ring. It is the \textbf{local ring} of $\mathbb{K}[x,y]$ at $p$.
\end{Definition}
The polynomial $p$ is an element of $\mathbb{K}[x,y]_p$. We denote the ideal it generates therein by $\langle p\rangle$. It consists of all rational functions of $\mathbb{K}(x,y)$ whose numerator is a multiple of $p$. An element of~$\mathbb{K}[x,y]_p$ has a multiplicative inverse if and only if it does not belong to $\langle p \rangle$. It is therefore the unique maximal ideal in $\mathbb{K}[x,y]_p$. Furthermore, $\mathbb{K}[x,y]_p$ is a principal ideal domain and every ideal is generated by some power of~$p$.

\begin{Problem}\label{prob:sep}
Given an irreducible polynomial $p\in\mathbb{K}[x,y]\setminus \left( \mathbb{K}[x] \cup \mathbb{K}[y]\right)$ and an ideal $I\subseteq \mathbb{K}[x,y]_p$, find a description of
\begin{equation*}
I \cap \left( \mathbb{K}(x) + \mathbb{K}(y)\right).
\end{equation*}
\end{Problem}

Although we have not made any restriction on the ideal in Problem~\ref{prob:sep}, it turns out that the problem is only interesting for $I = \langle p \rangle$.

\begin{Lemma}\label{lem:squareFree}
Let $p\in\mathbb{K}[x,y]\setminus\left(\mathbb{K}[x] \cup \mathbb{K}[y] \right)$ be irreducible, and let $m > 1$ be an integer. Then
\begin{equation*}
\langle p^m \rangle \cap \left( \mathbb{K}(x) + \mathbb{K}(y)\right) = \{0\}.
\end{equation*}
\end{Lemma}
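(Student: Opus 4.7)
The plan is to differentiate and exploit the fact that separated functions have partial derivatives depending on a single variable, while elements of $\langle p^m\rangle$ for $m\geq 2$ have partial derivatives still divisible by $p$.

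Suppose for contradiction that $r = qp^m$ for some nonzero $q\in\mathbb{K}[x,y]_p$, and that $r = f(x)+g(y)$ with $f\in\mathbb{K}(x)$, $g\in\mathbb{K}(y)$. First I would observe that $\mathbb{K}[x,y]_p$ is closed under $\partial/\partial y$: if $q = q_n/q_d$ with $p\nmid q_d$, then $\partial q/\partial y$ has denominator dividing $q_d^2$, which is still coprime to $p$. Applying $\partial/\partial y$ to $r = qp^m$ yields
\begin{equation*}
\frac{\partial r}{\partial y} \;=\; p^{m-1}\!\left(\frac{\partial q}{\partial y}\,p \;+\; m\,q\,\frac{\partial p}{\partial y}\right),
\end{equation*}
so $\partial r/\partial y \in \langle p^{m-1}\rangle \subseteq \langle p\rangle$ in $\mathbb{K}[x,y]_p$, since $m\geq 2$.

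On the other hand, $\partial r/\partial y = g'(y)$, an element of $\mathbb{K}(y)$. Writing $g'(y) = h_n(y)/h_d(y)$ in reduced form, the containment $g'(y)\in\langle p\rangle$ forces $p$ to divide $h_n(y)$ in $\mathbb{K}[x,y]$ (after clearing denominators and using that the $p$-free part can be cancelled). But $p$ is irreducible and, by hypothesis, $p\notin\mathbb{K}[y]$, so $p$ genuinely involves $x$ and cannot divide a nonzero polynomial in $y$ alone. Hence $h_n = 0$, so $g'=0$ and $g$ is a constant. Interchanging the roles of $x$ and $y$ and using $p\notin\mathbb{K}[x]$ gives $f$ constant as well, whence $r=c\in\mathbb{K}$.

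It remains to rule out a nonzero constant. This falls out immediately from the discrete valuation structure on the local PID $\mathbb{K}[x,y]_p$: the relation $qp^m = c$ would give $\operatorname{val}_p(q)+m = \operatorname{val}_p(c) = 0$ for $c\neq 0$, contradicting $\operatorname{val}_p(q)\geq 0$ and $m\geq 1$. Therefore $c=0$ and $r=0$, as claimed. The only subtle point I would take care with is the bookkeeping that shows $\partial r/\partial y$ truly lies in $\langle p^{m-1}\rangle$ as an element of the local ring (not merely in $\mathbb{K}(x,y)$); everything else is formal manipulation.
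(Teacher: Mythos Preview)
Your proof is correct, and shares the same core mechanism as the paper's: differentiate and use $m\geq 2$ to force $p$ to divide a derivative that is purely univariate. The execution differs, however. The paper clears denominators to obtain $p^m \mid f_n g_d - g_n f_d$ in $\mathbb{K}[x,y]$, picks a root $x_0\in\overline{\mathbb{K}(y)}\setminus\mathbb{K}$ of $p$ (which is then a repeated root of $f_n g_d - g_n f_d$), and shows that $x_0$ must be a root of the numerator of $\partial f/\partial x\in\mathbb{K}(x)$, contradicting $x_0\notin\mathbb{K}$. You instead differentiate the identity $qp^m=f+g$ directly in the local ring and read off $g'\in\langle p\rangle$, which is immediately incompatible with $g'\in\mathbb{K}(y)$ and $p\notin\mathbb{K}[y]$ unless $g'=0$. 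Your route is a bit more self-contained: it avoids passing to $\overline{\mathbb{K}(y)}$, treats $f$ and $g$ symmetrically, and handles the constant case explicitly via the $p$-adic valuation, whereas the paper implicitly assumes $f$ is non-constant. Both arguments are short and equally effective here.
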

\begin{proof}
Assume that there is a $q\in\mathbb{K}[x,y]_p\setminus\{0\}$ such that
\begin{equation*}
q p^m = f - g
\end{equation*}
for some $f\in\mathbb{K}(x)$ and $g\in\mathbb{K}(y)$. The numerator of the right hand side is $f_n g_d - g_nf_d$, and the numerator of the left hand side is $q_n p^m$. Hence
\begin{equation*}
p^m \mid f_n g_d - g_nf_d.
\end{equation*}
Since $p$ does not have any univariate factors, there is an $x_0\in\overline{\mathbb{K}(y)}\setminus\mathbb{K}$ that is a root of $f_n g_d - g_nf_d$ and $\frac{\partial}{\partial x}(f_n g_d - g_nf_d)$ in $\mathbb{K}(y)[x]$. It is then easily seen to be a root of $\frac{\partial}{\partial x} f$ too. Because $f$ is not a constant, $\frac{\partial}{\partial x} f$ cannot be identically zero. So $x_0\in\mathbb{K}$. A contradiction to the choice of $x_0$.
\end{proof}

There is an uncertainty in the formulation of Problem~\ref{prob:sep}. We asked for a ``description'' of the intersection of an ideal of $\mathbb{K}[x,y]_p$ with $\mathbb{K}(x) + \mathbb{K}(y)$ but we did not make clear what kind of description. In general, the intersection is not an ideal, so there is no point in asking for an ideal basis. And although it is a vector space, a vector space basis is not very helpful as it will be infinite in general.
The following two propositions provide an alternative description that will turn out to be convenient.

\begin{Proposition}
Let $p\in\mathbb{K}[x,y]\setminus\left( \mathbb{K}[x] \cup \mathbb{K}[y]\right)$ be irreducible. Then
\begin{equation*}
\mathrm{F}(p) := \left\{ (f,g) \in\mathbb{K}(x) \times \mathbb{K}(y) : f-g \in \langle p \rangle \right\}
\end{equation*}
is a field with respect to componentwise addition and multiplication. It is referred to as the \textbf{field of separated multiples} of $p$.
\end{Proposition}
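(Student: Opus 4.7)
The plan is to verify first that $\mathrm{F}(p)$ is a subring of $\mathbb{K}(x)\times\mathbb{K}(y)$, then to show it has no zero divisors, and finally to construct multiplicative inverses for nonzero elements. Closure under addition is immediate from $\langle p\rangle$ being closed under addition. For closure under multiplication I would write, for $(f_1,g_1),(f_2,g_2)\in\mathrm{F}(p)$,
\begin{equation*}
f_1f_2 - g_1g_2 = f_1(f_2-g_2) + g_2(f_1-g_1),
\end{equation*}
noting that $f_1,g_2\in\mathbb{K}[x,y]_p$ because their denominators lie in $\mathbb{K}[x]$ respectively $\mathbb{K}[y]$ and $p$, being irreducible in $\mathbb{K}[x,y]\setminus(\mathbb{K}[x]\cup\mathbb{K}[y])$, cannot divide a nonzero element of $\mathbb{K}[x]$ or $\mathbb{K}[y]$. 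Hence the right-hand side lies in $\langle p\rangle$. The multiplicative identity $(1,1)$ obviously satisfies $1-1\in\langle p\rangle$.

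The core observation, which drives both the absence of zero divisors and the existence of inverses, is that a pair $(f,g)\in\mathrm{F}(p)$ with $f=0$ forces $g=0$, and symmetrically. Indeed, $-g\in\langle p\rangle$ with $g\in\mathbb{K}(y)$ would mean $p$ divides the numerator $g_n\in\mathbb{K}[y]$ in $\mathbb{K}[x,y]$, again impossible by irreducibility of $p$ unless $g_n=0$. Consequently every nonzero element of $\mathrm{F}(p)$ has both components nonzero, and componentwise products of such elements remain nonzero; so $\mathrm{F}(p)$ is an integral domain.

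For inverses, given a nonzero $(f,g)\in\mathrm{F}(p)$ I would set $(f^{-1},g^{-1})\in\mathbb{K}(x)\times\mathbb{K}(y)$ and verify
\begin{equation*}
f^{-1}-g^{-1} = \frac{g-f}{fg} = -\frac{1}{fg}(f-g).
\end{equation*}
Since $f-g\in\langle p\rangle$ by hypothesis, it suffices to check that $1/(fg)\in\mathbb{K}[x,y]_p$, which again reduces to the fact that the numerators of $f$ and $g$ (as rational functions in $x$ and $y$ alone) cannot be divisible by $p$. Then $(f^{-1},g^{-1})\in\mathrm{F}(p)$ and it is manifestly the componentwise inverse of $(f,g)$.

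The only subtle point, used throughout, is the ``$p\nmid$ univariate polynomials'' fact; once that is recorded, everything else is mechanical verification, so I would state this as a preliminary remark and then run the three checks (subring, no zero divisors, inverses) in sequence.
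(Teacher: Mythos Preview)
Your argument is correct and follows essentially the same route as the paper: closure under multiplication via $f_1f_2-g_1g_2=f_1(f_2-g_2)+g_2(f_1-g_1)$, the observation that $f=0$ forces $g=0$ because $p$ cannot divide a nonzero univariate polynomial, and the inverse formula $f^{-1}-g^{-1}=-(fg)^{-1}(f-g)$ with $f,g$ units in $\mathbb{K}[x,y]_p$. Your separate ``no zero divisors'' step is harmless but redundant once inverses are established.
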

\begin{proof}
Since $\mathrm{F}(p)$ is a subset of $\mathbb{K}(x)\times\mathbb{K}(y)$ and the latter is a commutative ring with respect to componentwise addition and multiplication it is enough to note that it contains $(0,0)$ and $(1,1)$ and to observe that $\mathrm{F}(p)$ is closed under componentwise addition and multiplication to prove that it is a ring with unity. It is clearly closed under componentwise addition, and it is closed under componentwise multiplication, because for $(f,g), (f',g')\in\mathrm{F}(p)$ we have $f-g, f'-g'\in\langle p \rangle$, and so $ff' - gg'=(f-g)f' + g(f'-g')\in\langle p\rangle$. Hence $\mathrm{F}(p)$ is indeed a ring. It is also a field since, if $(f,g)\in\mathrm{F}(p)$, then clearly $-(f,g)\in\mathrm{F}(p)$, and if $f\neq 0$, then also $g\neq0$, and $f^{-1} - g^{-1} = -f^{-1}g^{-1}(f-g)\in\langle p \rangle$ as $f$ and $g$ are units in $\mathbb{K}[x,y]_p$.
\end{proof}

\begin{Proposition}~\label{cor:simply}
Let $p\in\mathbb{K}[x,y]\setminus(\mathbb{K}[x]\cup \mathbb{K}[y])$ be irreducible. Then
\begin{equation*}
\mathrm{F}(p) = \mathbb{K}((f,g))
\end{equation*}
for some~$(f,g)\in\mathbb{K}(x) \times \mathbb{K}(y)$.
\end{Proposition}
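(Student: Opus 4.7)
The plan is to reduce everything to L\"uroth's theorem through a coordinate projection. Consider the map
\[
\pi_1 \colon \mathrm{F}(p) \to \mathbb{K}(x), \quad (f,g) \mapsto f,
\]
which is a $\mathbb{K}$-algebra homomorphism (with $\mathbb{K}$ sitting inside $\mathrm{F}(p)$ via $c \mapsto (c,c)$). Because $\mathrm{F}(p)$ is a field by the previous proposition and $\pi_1((1,1)) = 1 \neq 0$, the map $\pi_1$ is automatically injective. Hence $K := \pi_1(\mathrm{F}(p))$ is a subfield of $\mathbb{K}(x)$ containing $\mathbb{K}$, and $\pi_1$ becomes a field isomorphism $\mathrm{F}(p) \xrightarrow{\sim} K$.

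Next I would invoke L\"uroth's theorem: every subfield of $\mathbb{K}(x)$ containing $\mathbb{K}$ has the form $\mathbb{K}(f)$ for some $f \in \mathbb{K}(x)$, allowing $f \in \mathbb{K}$ in the degenerate case $K = \mathbb{K}$. Let $(f, g) \in \mathrm{F}(p)$ denote the unique preimage of $f$ under $\pi_1$; its second coordinate $g$ lies in $\mathbb{K}(y)$ by definition of $\mathrm{F}(p)$. Under the isomorphism $\pi_1$, the subfield $\mathbb{K}((f,g)) \subseteq \mathrm{F}(p)$ is sent bijectively onto $\mathbb{K}(f) = K$. Since $\pi_1$ is itself an isomorphism of $\mathrm{F}(p)$ onto $K$, this forces $\mathbb{K}((f,g)) = \mathrm{F}(p)$, which is the desired conclusion.

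The main conceptual point is the identification of $\mathrm{F}(p)$ with a subfield of $\mathbb{K}(x)$ via $\pi_1$; once the injectivity of $\pi_1$ is in hand, L\"uroth's theorem does the rest without further work. I anticipate no real obstacle, since injectivity reduces to $\mathrm{F}(p)$ being a field, already established. If one wanted a constructive version producing $(f,g)$ explicitly, the harder task would be computing a generator of $K \subseteq \mathbb{K}(x)$; for the existence statement at hand this is unnecessary.
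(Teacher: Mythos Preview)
Your proof is correct and follows exactly the same approach as the paper: project $\mathrm{F}(p)$ onto $\mathbb{K}(x)$ via the first coordinate, observe this is a field isomorphism onto its image, and apply L\"uroth's theorem to conclude that $\mathrm{F}(p)$ is simple. Your write-up is slightly more explicit about why the projection is injective, but the argument is the same.
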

\begin{proof}
The projection $\pi: \mathbb{K}(x)\times \mathbb{K}(y) \rightarrow \mathbb{K}(x)$ on the first component induces a field isomorphism between $\mathrm{F}(p)$ and its image $\pi(\mathrm{F}(p))$. By L\"{u}roth's theorem~\cite{luroth1875beweis} every subfield of $\mathbb{K}(x)$ that contains $\mathbb{K}$ is simple, i.e. of the form $\mathbb{K}(f)$ for some $f\in\mathbb{K}(x)$. Therefore $\mathrm{F}(p)$ is simple too.
\end{proof}

We can now formulate Problem~\ref{prob:sep} more precisely.

\begin{Problem}\label{prob:gen}
Given an irreducible polynomial $p\in\mathbb{K}[x,y]\setminus \left( \mathbb{K}[x] \cup \mathbb{K}[y] \right)$, find
a generator of $\mathrm{F}(p)$.
\end{Problem}

There is a formulation of Problem~\ref{prob:gen} which does not involve rational functions but only polynomials. It relies on the notion of near-separateness~\cite{alonso1997note, alonso1995rational} and near-separability.

\begin{Definition}
A polynomial $p\in\mathbb{K}[x,y]$ is said to be \textbf{near-separated}, if there exist $f\in\mathbb{K}(x)$ and $g\in\mathbb{K}(y)$ such that $p = f_n g_d - g_n f_d$. It is called \textbf{near-separable}, if there is a $q\in\mathbb{K}[x,y]\setminus \{0\}$ such that $qp$ is near-separated.
\end{Definition}

It is useful to be able to recognize whether a given polynomial is near-separated. In order to explain how, let $p$ be an element of $\mathbb{K}[x,y]$, and assume that it does not have any univariate factors. If $p$ is near-separated, then there are $f\in\mathbb{K}(x)$ and $g\in\mathbb{K}(y)$ such that $p = f_ng_d - g_nf_d$. Wlog we can assume that $\deg f_n > \deg f_d$ and $\mathrm{lc}(f_n) = 1$. So $\mathrm{lc}_x(p) = g_d$, and $g_d$ can be read off from $p$. The derivative of $p / \mathrm{lc}_y(p)$ with respect to $y$ equals $f_d \frac{\partial}{\partial_y} g$. Assuming that $\mathrm{lc}(f_d) = 1$, one can easily determine $f_d$ from that. Furthermore, $g_n$ can be reconstructed from it by making an ansatz, and solving a system of linear equations that results from a comparison of coefficients. An upper bound on the degree of $g_n$ is $\deg_y p$. The unknown $f_n$ is then easily derived from $p$.

The next lemma relates divisibility of near-separated polynomials with composition of rational functions. It will establish the existence of a distinguished near-separated multiple of a polynomial. For a proof we refer to~\cite[Theorem~1]{schicho1995note}.

\begin{Lemma}\label{lem:nearSep}
Let $ f, F\in\mathbb{K}(x)$ and $g, G\in\mathbb{K}(y)$ be non-constant rational functions. Then
\begin{equation*}
f_n g_d - g_n f_d  \mid F_nG_d - G_nF_d
\end{equation*}
if and only if 
\begin{equation*}
\exists \hspace{2 pt} h\in\mathbb{K}(t): h((f,g)) = (F,G).
\end{equation*}
\end{Lemma}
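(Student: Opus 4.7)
\emph{Proof plan.} The two directions call for rather different techniques.

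For the easy direction ($\Leftarrow$), given $h \in \mathbb{K}(t)$ with $h(f) = F$ and $h(g) = G$, I would homogenise $h$: setting $k = \max(\deg h_n, \deg h_d)$ and $P(A,B) = B^k h_n(A/B)$, $Q(A,B) = B^k h_d(A/B)$, both degree-$k$ forms. Then $h(f) = P(f_n, f_d)/Q(f_n, f_d)$, and since $F$ is reduced there exists a single $C \in \mathbb{K}[x]$ with $P(f_n, f_d) = F_n C$ and $Q(f_n, f_d) = F_d C$; symmetrically one obtains $\widetilde C \in \mathbb{K}[y]$ on the $y$-side. A short calculation gives
\[
C \widetilde C \, (F_n G_d - G_n F_d) = P(f_n, f_d)\, Q(g_n, g_d) - P(g_n, g_d)\, Q(f_n, f_d),
\]
and the right-hand side, regarded as a polynomial in $A,B,C,D$, vanishes whenever $AD = BC$ (by proportionality and homogeneity of $P, Q$), so it is divisible by $AD - BC$, i.e.\ by $f_n g_d - g_n f_d$ after substitution. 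To descend this to $F_n G_d - G_n F_d$ itself I would observe that $f_n g_d - g_n f_d$ has no non-trivial univariate factor: any $r(x)$ dividing it would force $r \mid f_n$ and $r \mid f_d$ by the $\mathbb{K}$-linear independence of the coefficient sequences of $g_n$ and $g_d$, contradicting that $f$ is reduced.

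The harder direction ($\Rightarrow$) I would handle geometrically. The divisibility hypothesis means, on the Zariski-dense open subset of $\{f_n g_d - g_n f_d = 0\}$ on which no denominator vanishes, that $f(x) = g(y)$ implies $F(x) = G(y)$. From this one deduces that $g(y_1) = g(y_2)$ implies $G(y_1) = G(y_2)$: pick $x_0$ with $f(x_0) = g(y_1) = g(y_2)$ (possible because the non-constant rational map $f$ is surjective on $\mathbb{P}^1$ over $\mathbb{K} = \overline{\mathbb{K}}$), and read off $G(y_1) = F(x_0) = G(y_2)$. Symmetrically $f(x_1) = f(x_2)$ implies $F(x_1) = F(x_2)$.

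This ``fibre-invariance'' forces $G \in \mathbb{K}(g)$. By L\"uroth, $\mathbb{K}(g, G) = \mathbb{K}(w)$ for some $w \in \mathbb{K}(y)$, with $g = \phi(w)$ and $G = \psi(w)$; since $\psi$ is constant on fibres of $w$, the hypothesis that $G$ is constant on each fibre of $g$ translates to $\psi$ being constant on the generic set $\phi^{-1}(t)$ for each $t$. A standard generic-fibre argument then yields $\psi = h \circ \phi$ for some $h \in \mathbb{K}(t)$, whence $G = h(g)$. The symmetric argument gives $F = \widetilde h(f)$, and evaluating at a generic pair $(x_0, y_0)$ on $\{f = g\}$ yields $\widetilde h(f(x_0)) = F(x_0) = G(y_0) = h(f(x_0))$, so $\widetilde h = h$. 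I expect the subtlest step to be the passage from set-theoretic invariance on fibres to the factorisation $\psi = h \circ \phi$ as rational functions: it is a standard L\"uroth/degree-count fact but requires a careful treatment of ramification to state rigorously.
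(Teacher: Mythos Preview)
The paper does not give its own proof of this lemma; it simply cites \cite[Theorem~1]{schicho1995note}. So there is no in-paper argument to compare against directly.

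Your plan is sound. The $(\Leftarrow)$ direction via homogenisation and the observation that $P(A,B)Q(C,D)-P(C,D)Q(A,B)$ vanishes on the irreducible hypersurface $AD=BC$ is clean, and your argument that $f_n g_d - g_n f_d$ has no univariate factor is correct (this is also what underlies the squarefreeness used elsewhere in the paper, cf.\ Lemma~\ref{lem:squareFree}).

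For $(\Rightarrow)$, the fibre-invariance step works once you note it holds away from finitely many $x$- and $y$-values. The detour through L\"uroth and an auxiliary generator $w$ is unnecessary, however: you can show $G \in \mathbb{K}(g)$ directly by Galois theory. The extension $\mathbb{K}(y)/\mathbb{K}(g)$ has degree $d=\deg g$ with minimal polynomial $g_n(Y) - g\,g_d(Y)$; in its splitting field the conjugates $y=y_1,\dots,y_d$ of $y$ all satisfy $g(y_i)=g$. Your fibre-invariance, once upgraded to the polynomial divisibility
\[
g_n(Y_1)g_d(Y_2)-g_n(Y_2)g_d(Y_1)\ \bigm|\ G_n(Y_1)G_d(Y_2)-G_n(Y_2)G_d(Y_1)
\]
(using that the left side is squarefree), yields $G(y_i)=G(y_j)$ for all $i,j$. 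Hence $G(y)$ is fixed by $\Gal$ over $\mathbb{K}(g)$, so $G\in\mathbb{K}(g)$. This is exactly the mechanism the paper exploits later when it introduces $t$ and the roots $\alpha_i,\beta_j$ of $f-t$ and $g-t$ in $\overline{\mathbb{K}(t)}$, and it sidesteps the ramification issues you were worried about.
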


\begin{Corollary}\label{cor:minimal}
Any irreducible polynomial $p\in\mathbb{K}[x,y]$ has a near-separated multiple that divides any other near-separated multiple of $p$. It is unique up to multiplicative constants, and referred to as the \textbf{minimal near-separated multiple} of $p$. If $p\in\mathbb{K}[x,y]\setminus\left( \mathbb{K}[x] \cup \mathbb{K}[y]\right)$, then $\mathrm{F}(p) = \mathbb{K}((f,g))$ if and only if $f_ng_d - g_nf_d$ is the minimal near-separated multiple of $p$.
\end{Corollary}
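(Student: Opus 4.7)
The plan is to use Proposition~\ref{cor:simply} together with Lemma~\ref{lem:nearSep} to convert a statement about divisibility of near-separated polynomials into one about containment of subfields of $\mathbb{K}(x)\times\mathbb{K}(y)$.

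If $p\in\mathbb{K}[x]\cup\mathbb{K}[y]$ the claim is almost immediate: $p$ itself is a near-separated polynomial (take $f=p,\,g=0$ or the symmetric choice), and $p$ divides every multiple of $p$, so it is the minimal near-separated multiple, unique up to scalars.

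For $p\in\mathbb{K}[x,y]\setminus(\mathbb{K}[x]\cup\mathbb{K}[y])$ I would first match up the two languages: such a $p$ divides no nonzero element of $\mathbb{K}[x]$ or $\mathbb{K}[y]$, so the condition $p\mid f_ng_d-g_nf_d$ is equivalent to $f-g\in\langle p\rangle$, i.e.\ to $(f,g)\in\mathrm{F}(p)$. Thus the near-separated multiples of $p$ are exactly $\{f_ng_d-g_nf_d : (f,g)\in\mathrm{F}(p)\}$. Now, by Proposition~\ref{cor:simply}, pick $(f_0,g_0)$ with $\mathrm{F}(p)=\mathbb{K}((f_0,g_0))$ and set $m_0:=(f_0)_n(g_0)_d-(g_0)_n(f_0)_d$. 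Every $(F,G)\in\mathrm{F}(p)=\mathbb{K}((f_0,g_0))$ can be written $(h(f_0),h(g_0))$ for some $h\in\mathbb{K}(t)$, and Lemma~\ref{lem:nearSep} then gives $m_0\mid F_nG_d-G_nF_d$. Hence $m_0$ is a near-separated multiple of $p$ dividing every near-separated multiple of $p$, giving existence. Uniqueness up to constants is automatic, since any two elements with this property divide each other.

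The equivalence ``$\mathrm{F}(p)=\mathbb{K}((f,g))$ iff $f_ng_d-g_nf_d$ is the minimal near-separated multiple of $p$'' then splits into two halves. The forward direction is the existence argument above applied with $(f_0,g_0)=(f,g)$. Conversely, if $m:=f_ng_d-g_nf_d$ is minimal, then $m$ and $m_0$ are both minimal and hence associate. Feeding $m\mid m_0$ and $m_0\mid m$ into Lemma~\ref{lem:nearSep} yields $h_1,h_2\in\mathbb{K}(t)$ with $(f,g)=h_1((f_0,g_0))$ and $(f_0,g_0)=h_2((f,g))$, whence $\mathbb{K}((f,g))=\mathbb{K}((f_0,g_0))=\mathrm{F}(p)$.

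The only real conceptual step is the invocation of Lemma~\ref{lem:nearSep}, which translates the universal property of the generator $(f_0,g_0)$ of $\mathrm{F}(p)$ into divisibility-minimality of the corresponding polynomial $m_0$. I do not expect any serious obstacle beyond verifying carefully that denominators remain coprime to $p$ so that the identification of near-separated multiples with pairs in $\mathrm{F}(p)$ is legitimate; once that is in place, everything else is routine bookkeeping.
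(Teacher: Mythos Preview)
Your proposal is correct and follows essentially the same route as the paper: obtain a generator $(f_0,g_0)$ of $\mathrm{F}(p)$ via Proposition~\ref{cor:simply}, then use Lemma~\ref{lem:nearSep} to translate the field-generation property into divisibility-minimality of $(f_0)_n(g_0)_d-(g_0)_n(f_0)_d$. Your write-up is in fact slightly more careful than the paper's (you treat the univariate case explicitly and spell out the converse of the equivalence via both directions of Lemma~\ref{lem:nearSep}), but the underlying argument is the same.
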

\begin{proof}
The first part of the statement is clearly true when $p$ is near-separated. So let us assume that $p$ is not near-separated, and let $(F,G)$ be an element of $\mathbb{K}(x)\times\mathbb{K}(y)$ such that $F_nG_d - G_nF_d$ is a near-separated multiple of $p$. By Proposition~\ref{cor:simply} there is an $(f,g)\in\mathbb{K}(x)\times\mathbb{K}(y)$ such that $\mathrm{F}(p) = \mathbb{K}((f,g))$. Hence there is an $h\in\mathbb{K}(t)$ such that $(F,G) = h((f,g))$. By Lemma~\ref{lem:nearSep}, $f_ng_d - g_nf_d$ is a divisor of $F_nG_d - G_n F_d$. Since the latter was an arbitrary near-separated multiple of $p$, the former is a minimal near-separated multiple of $p$. Lemma~\ref{lem:nearSep} also shows that if $f_ng_d - g_nf_d$ is a minimal near-separated multiple of $p$, then $\mathrm{F}(p) = \mathbb{K}((f,g))$. Finally, minimal near-separated multiplies are unique. If there were another minimal near-separated multiple, they would divide each other, and therefore differ only by a multiplicative constant.
\end{proof}

We can now give the aforementioned reformulation of Problem~\ref{prob:gen}.
\begin{Problem}\label{prob:nearSep}
Given an irreducible polynomial $p\in\mathbb{K}[x,y]$, determine its minimal near-separated multiple.
\end{Problem}
We close this section with an example.
\begin{Example}\label{ex:1}
The polynomial $p=xy-x-y-x^2y^2$ is not near-separated as 
\begin{equation*}
\frac{\partial}{\partial_y}\frac{p}{\mathrm{lc}_x(p)} = -\frac{(1-x)}{y^2} -\frac{2x}{y^3}
\end{equation*}
is not the product of a polynomial in $x$ and a rational function in $y$. However, $p$ is near-separable. Its minimal near-separated multiple is
\begin{equation*}
(x-y) p = (1-x-x^3)y^2 - x^2(1-y-y^3),
\end{equation*}
and its field of separated multiples is
\begin{equation*}
\mathrm{F}(p) = \mathbb{K}\left(\left(\frac{1-x-x^3}{x^2},\frac{1-y-y^3}{y^2}\right)\right).
\end{equation*}
\end{Example}
 
\section{Homogenous case}\label{section:hom}

In this section we explain how to solve Problem~\ref{prob:gen} and Problem~\ref{prob:nearSep} when $p$ is homogenous. In order to do so, we first give some definitions. We introduce the notion of a weight function, explain what we mean by the leading part of a polynomial, and recall the definition of a homogenous polynomial.
\begin{Definition} A real-valued function $\omega$ on the set of terms in $x$ and $y$ is a \textbf{weight function}, if there are $\omega_x$, $\omega_y\in\mathbb{Z}$ such that
\begin{equation*}
\omega (a x^iy^j) = \omega_x i + \omega_y j
\end{equation*}
for all $i$, $j\in\mathbb{Z}$ and $a\in\mathbb{K}\setminus\{0\}$. In this case we write $\omega = (\omega_x, \omega_y)$. Two weight functions $\omega_1$, $\omega_2$ are said to be \textbf{equivalent}, if there is a positive number $c\in\mathbb{R}$ such that $\omega_2 = c\omega_1$. Given a polynomial $p\in\mathbb{K}[x,y]$ and a weight function $\omega$, we write $\omega(p)$ for the weight of a term of $p$ of maximal weight. We denote by $\mathrm{lp}_\omega(p)$ the sum of terms of $p$ of weight maximal weight. It is referred to as the \textbf{leading part} of $p$ with respect to $\omega$. It only depends on the equivalence class of the weight function but not on its representative.   We say that $p$ is \textbf{homogenous} with respect to $\omega$, if $\mathrm{lp}_\omega(p) = p$. 
Given a rational function $r\in\mathbb{K}(x,y)$, we define its leading part with respect to $\omega$ by $\mathrm{lp}_\omega(r) := \mathrm{lp}_\omega(r_n)/\mathrm{lp}_\omega(r_d)$. The \textbf{sign} of $\omega$ is 
\begin{equation*}
\mathrm{sgn}(\omega) := (\mathrm{sgn}(\omega_x), \mathrm{sgn}(\omega_y)),
\end{equation*}
where the sign of a number is either $1$, $-1$ or $0$ depending on whether it is positive, negative or equal to $0$. 
The \textbf{Newton polygon} of $p\in\mathbb{K}[x,y]$ is denoted by $\mathrm{Newt}(p)$. It is the convex hull of the support of $p$, that is, of the exponents of its non-zero terms.
\end{Definition}

Let $\omega = (\omega_x,\omega_y)$ be a non-zero weight function, and let $p$ be a polynomial that is homogenous with respect to it. We can assume that $\omega_x$ and $\omega_y$ are different from zero, otherwise $p$ is the product of a monomial in one and a polynomial in the other variable, and hence already near-separated. If there is a non-zero polynomial $q$ such that $qp$ is near-separated, we may assume that $q$, and therefore also $qp$, is homogenous. If it were not, we could replace $q$ by $\mathrm{lp}_\omega(q)$, since $\mathrm{lp}_\omega(q) p = \mathrm{lp}_\omega(q) \mathrm{lp}_\omega(p) = \mathrm{lp}_\omega(qp)$ and the leading part of a near-separated polynomial is near-separated. Under these assumptions there are $a,b\in\mathbb{K}$ and $k,l,m,n\in\mathbb{N}$ such that
\begin{equation*}
q p = a x^ky^l - b x^m y^n.
\end{equation*}
We can also assume that $qp$ is not a single term, as otherwise $p$ would be a single term as well. If $\omega_x$, $\omega_y  > 0$ and
$k>m$, for instance, then $n>l$, and $a x^ky^l - b x^m y^n$ is the product of $x^m y^l$ and $ax^{k-m} - by^{n-l}$ in $\mathbb{K}[x,y]$. Defining $\tilde{q}$ and $\tilde{p}$ by $q = x^{\mathrm{val}_x q} y^{\mathrm{val}_y q} \tilde{q}$ and $p = x^{\mathrm{val}_x p} y^{\mathrm{val}_y p} \tilde{p}$, we have
\begin{equation*}
\tilde{q} \tilde{p} =  ax^{k-m} - by^{n-l},
\end{equation*}
so $\tilde{p}$ has a non-trivial multiple in $\mathbb{K}[x] + \mathbb{K}[y]$. This is not only a necessary condition for the near-separability of $p$, but clearly also a sufficient one. We summarize these observations in the following proposition.
\begin{Proposition}
Let $p\in\mathbb{K}[x,y]$ be homogenous with respect to $\omega \in\mathbb{Z}_{> 0}^2$. Then $p$ is near-separable if and only if $x^{-\mathrm{val}_x p} y^{-\mathrm{val}_y p} p$ has a non-zero multiple in $\mathbb{K}[x]+\mathbb{K}[y]$.
\end{Proposition}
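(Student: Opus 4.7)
For the $(\Rightarrow)$ implication, I would begin with a non-zero $q$ such that $qp$ is near-separated. Because $p$ is $\omega$-homogeneous, we have $\mathrm{lp}_\omega(q)\cdot p=\mathrm{lp}_\omega(q)\cdot\mathrm{lp}_\omega(p)=\mathrm{lp}_\omega(qp)$, and since the leading part of a near-separated polynomial is again near-separated, we may replace $q$ with $\mathrm{lp}_\omega(q)$ and assume $q$, and therefore $qp$, is $\omega$-homogeneous. The structural claim used in the preceding paragraph then tells us that $qp$ must be either a monomial or a binomial of the form $ax^ky^l-bx^my^n$ with $ab\neq 0$. In the binomial case, homogeneity with $\omega_x,\omega_y>0$ forces (say, for $k>m$) also $n>l$, so $qp$ factors as $x^my^l(ax^{k-m}-by^{n-l})$. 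Dividing both sides by the monomial part $x^{\val_x q+\val_x p}y^{\val_y q+\val_y p}$ of $qp$ then yields $\tilde q\,\tilde p=ax^{k-m}-by^{n-l}\in\mathbb{K}[x]+\mathbb{K}[y]$, a non-zero multiple of $\tilde p$. The monomial case is trivial because then $p$ itself is a monomial, $\tilde p=1$, and $1\cdot\tilde p=1\in\mathbb{K}[x]+\mathbb{K}[y]$.

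For the $(\Leftarrow)$ implication, suppose $\tilde q\,\tilde p=\phi(x)+\psi(y)$ for some non-zero $\tilde q\in\mathbb{K}[x,y]$ and polynomials $\phi\in\mathbb{K}[x]$, $\psi\in\mathbb{K}[y]$. Multiplying both sides by $x^{\val_x p}y^{\val_y p}$ gives
\begin{equation*}
\tilde q\cdot p=\bigl(x^{\val_x p}\phi(x)\bigr)\cdot y^{\val_y p}-\bigl(-y^{\val_y p}\psi(y)\bigr)\cdot x^{\val_x p},
\end{equation*}
which is near-separated with witnesses $f=\phi(x)\in\mathbb{K}(x)$ and $g=-\psi(y)\in\mathbb{K}(y)$, presented through $(f_n,f_d)=(x^{\val_x p}\phi,\,x^{\val_x p})$ and $(g_n,g_d)=(-y^{\val_y p}\psi,\,y^{\val_y p})$. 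Hence $\tilde q\cdot p$ is a near-separated multiple of $p$, so $p$ is near-separable.

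The main obstacle I anticipate is justifying the structural claim used in the forward direction: that a non-zero $\omega$-homogeneous near-separated polynomial, with $\omega\in\mathbb{Z}_{>0}^2$, must be a monomial or a binomial $ax^ky^l-bx^my^n$. The cleanest route seems to be to take the leading terms $A_0,B_0,C_0,D_0$ (each automatically a single monomial, since these are univariate polynomials) of the factors in a representation $qp=AB-CD$ with $A,D\in\mathbb{K}[x]$, $B,C\in\mathbb{K}[y]$, and analyse two cases: either $\mathrm{lp}_\omega(qp)=A_0B_0-C_0D_0$ is already a binomial that, by the homogeneity of $qp$, equals $qp$ itself, or the monomials $A_0B_0$ and $C_0D_0$ coincide and one needs an inductive argument peeling off the leading terms of $A,B,C,D$. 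The remaining bookkeeping -- balancing reduced versus unreduced representations of $f$ and $g$ -- is routine once this binomial normal form is secured.
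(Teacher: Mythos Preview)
Your forward direction is the paper's argument: pass to $\mathrm{lp}_\omega(q)$, use that an $\omega$-homogeneous near-separated polynomial with $\omega\in\mathbb{Z}_{>0}^2$ is at most a binomial, and strip the common monomial. Your inductive sketch for the binomial structure (rewriting $AB-CD=(A-\lambda D)B-(C-\lambda B)D$ when the leading monomials $A_0B_0$ and $C_0D_0$ coincide, and iterating on $\deg A+\deg C$) is correct and fills in a detail the paper leaves implicit.

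The backward direction, however, has a real gap that is not bookkeeping. The paper's definition of ``near-separated'' uses the \emph{reduced} numerator and denominator of $f$ and $g$; your pair $(f_n,f_d)=(x^{\val_x p}\phi,\,x^{\val_x p})$ is not reduced whenever $\val_x p>0$, and after reduction one gets $f=\phi$, $g=-\psi$ with $f_d=g_d=1$, whence $f_ng_d-g_nf_d=\phi+\psi\neq\tilde q\,p$. No alternative representation rescues this: setting $x=0$ in any near-separated $f_ng_d-g_nf_d$ forces $g$ to be constant (otherwise $f_n(0)=f_d(0)=0$, contradicting coprimality), so every near-separated polynomial divisible by $x$ lies in $\mathbb{K}[x]$. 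Hence if $x\mid p$ and $p\notin\mathbb{K}[x]$ then $p$ is not near-separable at all. Concretely, $p=x(x-y)$ is $(1,1)$-homogeneous with $\tilde p=x-y\in\mathbb{K}[x]+\mathbb{K}[y]$, yet $p$ is not near-separable; so the equivalence is false as literally stated. The paper's ``clearly also a sufficient one'' tacitly uses the ambient hypothesis that $p$ is irreducible: then either $p$ is a scalar multiple of $x$ or $y$ (already near-separated), or $\val_x p=\val_y p=0$, in which case $\tilde p=p$ and $\tilde q\,p=\phi+\psi$ is near-separated via $f=\phi$, $g=-\psi$ with $f_d=g_d=1$. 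Your argument should invoke irreducibility here rather than attempt the unreduced witnesses.
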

A similar statement holds in case not both of $\omega_x,\ \omega_y$ are positive. If $\omega_x,\ \omega_y < 0$, then one can replace them by their negative, and argue as before. If only one of them is negative, say $\omega_ y$, one needs to multiply $p$ by a suitable Laurent monomial and substitute $y^{-1}$ for $y$ before one can argue as before.

The question of how to decide whether a polynomial of $\mathbb{K}[x,y]$ has a non-zero multiple in $\mathbb{K}[x] + \mathbb{K}[y]$, and in case it does, how to find it, was discussed and solved in~\cite[Section~3]{buchacher2020separating}. If $q p = ax^m - b y^n$ for some $a,b\in\mathbb{K}\setminus\{0\}$, then $p(x,1)$ is a divisor of $ax^m - b$. Hence the roots of $p(x,1)$ are pairwise distinct and the ratio of every two of them is a root of unity. It turns out that this is also a sufficient condition for $p$ to have a non-zero multiple in $\mathbb{K}[x] + \mathbb{K}[y]$, and that a bound on the degrees of such a multiple can be derived from $p$. The precise statement, for whose proof we refer to~\cite{buchacher2020separating}, is the following.

\begin{Proposition}\label{prop:homog}
   Let $\omega$ be a weight function, and let $p \in \K[x, y] \setminus (\K[x] \cup \K[y])$ satisfy $\mathrm{lp}_{\omega}(p) = p$.
   Then $p$ has a non-zero multiple in $\mathbb{K}[x] + \mathbb{K}[y]$ if and only if
   \begin{enumerate}[label=(\alph*)]
       \item $p$ involves a monomial only in $x$, and
       \item all the roots of $p(x, 1)$ in $\K$ are distinct
       and the ratio of every two of them is a root of unity.
   \end{enumerate}
   Moreover, if $p$ has a non-zero multiple in $\mathbb{K}[x] + \mathbb{K}[y]$ and $N$ is the minimal number such that the ratio of every pair of roots of $p(x, 1)$ is an $N$-th root of unity, then the weight of the minimal (near-)separated multiple of $p$ is $N\omega_x$.
\end{Proposition}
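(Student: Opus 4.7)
The plan is first to reduce, as the paragraph preceding the statement indicates, to the case $\omega_x, \omega_y > 0$ by multiplying $p$ by a suitable Laurent monomial and possibly inverting a variable. For the forward direction, I would take a nonzero $qp \in \mathbb{K}[x] + \mathbb{K}[y]$ and pass to leading parts: by multiplicativity of $\mathrm{lp}_\omega$ and $\mathrm{lp}_\omega(p) = p$, one has $\mathrm{lp}_\omega(q)\,p = \mathrm{lp}_\omega(qp)$, and the right-hand side --- the leading part of an element of $\mathbb{K}[x] + \mathbb{K}[y]$ under a positive weight --- is either a monomial or of the form $ax^m - by^n$ with $a, b \neq 0$. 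The monomial case is excluded by $p \notin \mathbb{K}[x] \cup \mathbb{K}[y]$, so $\mathrm{lp}_\omega(q)\,p = ax^m - by^n$. Viewing this identity in $\mathbb{K}[y][x]$, the leading coefficient in $x$ on the right is the constant $a$, while on the left it is a product of two monomials in $y$ (by homogeneity), so both factors must be scalars; the one coming from $p$ gives condition (a). Specializing to $y = 1$ yields $p(x,1) \mid ax^m - b$, from which (b) is immediate.

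For the reverse direction, I would start from (a) and (b), first noting that (b) tacitly excludes $0$ from being a root of $p(x,1)$, so by the argument symmetric to (a) the polynomial $p$ also has a pure-$y$ monomial. With $N$ as in the statement, the roots of $p(x,1)$ are all $N$-th roots of some $\alpha \in \mathbb{K}^\times$, and therefore $p(x,1) \mid x^N - \alpha$. After normalizing $\omega$ so that $\omega_x, \omega_y$ are coprime, the support of $p$ lies on the line $\omega_x i + \omega_y j = w$ with step vector $(\omega_y, -\omega_x)$, which forces $p(x,1)$ to be a polynomial in $x^{\omega_y}$; it follows that $\omega_y \mid N$, so $M := N\omega_x/\omega_y$ is a positive integer. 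I would then homogenize the cofactor $\tilde q(x) := (x^N - \alpha)/p(x,1)$ at weight $N\omega_x - w$ to produce a polynomial $q(x,y)$ satisfying $q p = x^N - \alpha y^M \in \mathbb{K}[x] + \mathbb{K}[y]$. The main technical point --- and the step I expect to be the main obstacle --- is checking that this homogenization really lands in $\mathbb{K}[x,y]$ rather than its Laurent extension, which comes down to bookkeeping with the support of $\tilde q$ and the arithmetic of $N$ and $\omega$.

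For the minimality clause, I would consider any near-separated multiple $h = f_n g_d - g_n f_d$ of $p$ and pass again to the leading part; multiplicativity of $\mathrm{lp}_\omega$ on each factor shows that $\mathrm{lp}_\omega(h)$ has the shape $a' x^A y^B - b' x^C y^D$ and is still divisible by $p$. Setting $y = 1$ gives $p(x,1) \mid a'x^A - b'x^C$, and since $p(x,1)$ has nonzero constant term this refines (after assuming $A > C$, the other case being symmetric) to $p(x,1) \mid a'x^{A-C} - b'$; the minimality of $N$ then forces $N \mid (A-C)$ and in particular $A \geq N$. Consequently the weight $A\omega_x + B\omega_y$ of $h$ is at least $N\omega_x$, and the multiple constructed in the reverse direction attains this bound.
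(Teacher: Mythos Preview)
The paper does not give its own proof of this proposition: after a one-paragraph sketch of the forward implication (``if $qp=ax^m-by^n$ then $p(x,1)\mid ax^m-b$, hence the roots are distinct with root-of-unity ratios'') it explicitly defers to \cite{buchacher2020separating} for the full argument. So there is no in-paper proof to compare against, and your outline already supplies more than the present paper does; the forward half of your plan is exactly the sketch the paper gives.

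Your plan is sound and would go through. Two points are worth tightening. In the forward direction, the monomial case for $\mathrm{lp}_\omega(qp)$ is not excluded merely by $p\notin\K[x]\cup\K[y]$ (a mixed monomial such as $xy$ satisfies that hypothesis); rather, if $\mathrm{lp}_\omega(q)\,p$ is a single monomial then $p$ itself is a monomial, and a mixed monomial has no nonzero polynomial multiple in $\K[x]+\K[y]$, contradicting the standing assumption that one exists. In the minimality clause, ``multiplicativity of $\mathrm{lp}_\omega$ on each factor'' only yields $\mathrm{lp}_\omega(f_ng_d)-\mathrm{lp}_\omega(g_nf_d)$, which equals $\mathrm{lp}_\omega(h)$ provided the two leading monomials do not cancel; in the degenerate case $\deg f_n=\deg f_d$, $\deg g_n=\deg g_d$ with $\mathrm{lc}(f_n)\mathrm{lc}(g_d)=\mathrm{lc}(g_n)\mathrm{lc}(f_d)$, first replace $(f,g)$ by $(f-\lambda,g-\lambda)$ for the common ratio $\lambda$, which leaves $h=f_ng_d-g_nf_d$ unchanged and drops $\deg f_n$ strictly. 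After this adjustment your binomial shape for $\mathrm{lp}_\omega(h)$ follows and the weight bound $A\omega_x+B\omega_y\geq N\omega_x$ goes through as you wrote.
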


It remains to clarify how to decide whether the quotient of every pair of roots of $p(x,1)$ is a root of unity. If $p(x,1)$ is monic and a divisor of $ax^n - b$, then its constant term equals $(b/a)^{\deg p(x,1) / n}$. We can therefore consider $p(x/c,1)$ for $c = (b/a)^{1/n}$ and check whether it is square-free and its roots are roots of unity. For the former it is sufficient to verify whether $p(x/c,1)$ and its derivative are co-prime. The latter can be done by computing the minimal polynomials of the roots of $p(x/c,1)$ over $\mathbb{K}$, and checking whether they are cyclotomic. If they are and if they are given by $\phi_{n_1}, \dots, \phi_{n_k}$, where $\phi_{n_i}$ is the $n_i$-th cyclotomic polynomial, that is, a divisor of $x^{n_i}-1$ but not of $x^d -1$ for $d< n_i$, then each root of $p(x/c,1)$ is an $N$-th root of unity for $N = \mathrm{lcm}(n_1,\dots,n_k)$.

\begin{Example}
Consider the polynomial $p = x^2y^2+xy+1$ which is homogenous with respect to $\omega = (1,-1)$. It is the product of $y^2$ and
\begin{equation*}
\tilde{p} = x^2 + x y^{-1} + y^{-2} \in\mathbb{K}[x,y^{-1}].
\end{equation*}
The latter has a non-zero multiple in $\mathbb{K}[x] + \mathbb{K}[y^{-1}]$, since $\tilde{p}(x,1)$ is the third cyclotomic polynomial. Its minimal (near-)separated multiple is
\begin{equation*}
(x-y^{-1}) \tilde{p} = x^3 - y^{-3}.
\end{equation*}
Consequently, $p$ is near-separable and its minimal near-separated multiple is
\begin{equation*}
y (x-y^{-1}) y^2 \tilde{p} = (xy-1) p = x^3y^3-1.
\end{equation*}
\end{Example}

\section{Reduction to the homogenous case}\label{section:reduction}

In this section we present a semi-algorithm that solves Problem~\ref{prob:gen} and Problem~\ref{prob:nearSep} and prove its correctness. We begin with presenting two necessary conditions for the near-separability of a polynomial.

\begin{Proposition}\label{lemma:necessary}
If $p\in\mathbb{K}[x,y]$ is near-separable, then so is its leading part $\mathrm{lp}_\omega(p)$ with respect to any weight function $\omega\in\mathbb{Z}^2$.
\end{Proposition}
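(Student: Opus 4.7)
My plan is to push the near-separation of $p$ down to its leading part. Let $q\in\mathbb{K}[x,y]\setminus\{0\}$ together with $f\in\mathbb{K}(x)$ and $g\in\mathbb{K}(y)$ be a witness for the near-separability of $p$, so that $qp = f_n g_d - g_n f_d$. First I would apply $\mathrm{lp}_\omega$ to both sides and exploit its multiplicativity on nonzero polynomials: the left-hand side becomes $\mathrm{lp}_\omega(q)\,\mathrm{lp}_\omega(p)$, while $\mathrm{lp}_\omega(f_n g_d)=\mathrm{lp}_\omega(f_n)\,\mathrm{lp}_\omega(g_d)$ and $\mathrm{lp}_\omega(g_n f_d)=\mathrm{lp}_\omega(g_n)\,\mathrm{lp}_\omega(f_d)$.

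In the main case, where the $\omega$-weights of $f_n g_d$ and $g_n f_d$ agree and the two leading terms do not cancel, I would set
\[
F \;:=\; \frac{\mathrm{lp}_\omega(f_n)}{\mathrm{lp}_\omega(f_d)} \in \mathbb{K}(x), \qquad G \;:=\; \frac{\mathrm{lp}_\omega(g_n)}{\mathrm{lp}_\omega(g_d)} \in \mathbb{K}(y),
\]
each brought into reduced form by cancelling the common (Laurent) monomial factor. A short computation then shows $\mathrm{lp}_\omega(f_n g_d - g_n f_d) = F_n G_d - G_n F_d$, so $\mathrm{lp}_\omega(q)\,\mathrm{lp}_\omega(p)$ is a nonzero near-separated multiple of $\mathrm{lp}_\omega(p)$, which is precisely what near-separability of the leading part requires.

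The main obstacle is handling the degenerate sub-cases. When the two $\omega$-weights differ, only one summand contributes and $\mathrm{lp}_\omega(f_n g_d - g_n f_d)$ reduces to a single product of univariate leading parts. My plan here is to preprocess the witness: replace $(f,g)$ by $(f\alpha, g\beta)$ for suitable Laurent monomials $\alpha(x), \beta(y)$ that equalize the two $\omega$-weights, absorbing any negative-exponent factors into a rescaled $q$, and then invoke the main case. The other degenerate situation, where the two top terms have the same weight but cancel exactly, is dealt with by stripping those cancelling contributions off $f_n, f_d, g_n, g_d$ and iterating the analysis on the next-lower weight stratum; termination is guaranteed because all four supports are finite. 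The most delicate point is the bookkeeping needed to verify that $\mathrm{lp}_\omega(q)$ remains nonzero after each preprocessing or stripping step, so that the identity $\mathrm{lp}_\omega(q)\,\mathrm{lp}_\omega(p) = \mathrm{lp}_\omega(f_n g_d - g_n f_d)$ continues to provide a genuine near-separated multiple of $\mathrm{lp}_\omega(p)$.
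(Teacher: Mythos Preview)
Your plan is exactly the paper's argument: choose $q\neq 0$ with $qp=f_ng_d-g_nf_d$, use $\mathrm{lp}_\omega(qp)=\mathrm{lp}_\omega(q)\,\mathrm{lp}_\omega(p)$, and conclude once you know $\mathrm{lp}_\omega(qp)$ is again near-separated. The paper simply asserts that last step; you try to justify it, and that is where the proposal breaks. In your main case the ``short computation'' does not give $\mathrm{lp}_\omega(f_ng_d-g_nf_d)=F_nG_d-G_nF_d$: when $\omega_x,\omega_y\neq 0$ each of $\mathrm{lp}_\omega(f_n),\mathrm{lp}_\omega(f_d),\mathrm{lp}_\omega(g_n),\mathrm{lp}_\omega(g_d)$ is a single monomial, and putting $F=\mathrm{lp}_\omega(f_n)/\mathrm{lp}_\omega(f_d)$ in reduced form cancels a power of $x$, so one only obtains $\mathrm{lp}_\omega(f_ng_d-g_nf_d)=c\,x^{a}y^{b}(F_nG_d-G_nF_d)$ for some $a,b\ge 0$ and $c\in\mathbb{K}^\times$. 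The degenerate-case fixes are also unsound as written: replacing $(f,g)$ by $(f\alpha,g\beta)$ for Laurent monomials destroys the identity $qp=f_ng_d-g_nf_d$ (there is no reason $f\alpha-g\beta\in\langle p\rangle$), so you lose your witness; and after ``stripping'' in the cancellation case you are no longer working with coprime numerator/denominator pairs.

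This is not merely a bookkeeping issue. Under the paper's literal definition the asserted step actually fails: with $f=x(1+x)/(1+x^2)$ and $g=(1+y^2)/\bigl(y(1+y)\bigr)$ one checks that $p:=f_ng_d-g_nf_d=-1-x^2-y^2+xy+x^2y+xy^2$ is near-separated, yet $\mathrm{lp}_{(1,1)}(p)=x^2y+xy^2=xy(x+y)$ is not even near-separable, since any identity $q'\cdot xy(x+y)=F_nG_d-G_nF_d$ forces (substitute $x=0$, then $y=0$) both $G$ and $F$ to be constants, whence the right-hand side lies in $\mathbb{K}$. What the paper really relies on downstream is the weaker fact that $\mathrm{lp}_\omega(qp)$ always has the shape $A(x)B(y)-C(x)D(y)$, i.e.\ a monomial times a genuine $F_nG_d-G_nF_d$. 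Your main-case identity already delivers exactly this, and the degenerate cases collapse to single monomials or binomials, which are trivially of that shape; if you reformulate the target accordingly, the casework you outline becomes both correct and short.
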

\begin{proof}
Assume that $q\in\mathbb{K}[x,y]\setminus\{0\}$ is such that $qp$ is near-separated. Then $\mathrm{lp}_\omega(qp)$ is near-separated too, and $\mathrm{lp}_\omega(p)$ is near-separable, since $\mathrm{lp}_\omega(qp) = \mathrm{lp}_\omega(q)\mathrm{lp}_\omega(p)$.
\end{proof}

To compute the leading parts of a polynomial it is convenient to inspect its Newton polygon as there is a bijection between its leading parts and the faces of the polygon. The leading part which corresponds to a face is the sum of terms supported on it, that is, of those terms whose exponent vectors lie on it. Those we will be interested in consist of at least two terms. They correspond to the edges of the Newton polygon. The weight functions that give rise to them are the outward pointing normals of these edges. 

It can often be read off from the shape of its Newton polygon that a polynomial is not near-separable.

\begin{figure}
\begin{center}
  \begin{tikzpicture}[scale=.2]
    \draw (-3,-3)--(1,-3)--(1,3)--(-3,3);
    \draw (-3,-3)--(3,-3)--(3,1)--(-3,1);
    \draw[dotted] (3,1)--(1,3);
    \draw[->](-5,-3)--(6.5,-3);
    \draw[->](-3,-5)--(-3,6.5);
    \foreach \x/\y in {
    3/1, 1/3
   } \draw (\x,\y) node {$\bullet$};
  \end{tikzpicture}
  \hfil
   \begin{tikzpicture}[scale=.2]
    \draw (-3,-3)--(3,-3)--(3,3)--(-3,3);
    \draw (-3,-3)--(1,-3)--(1,1)--(-3,1);
    \draw[->](-5,-3)--(6.5,-3);
    \draw[->](-3,-5)--(-3,6.5);
    \foreach \x/\y in {
    3/3, 1/1
   } \draw (\x,\y) node {$\bullet$};
  \end{tikzpicture}
  \hfil
     \begin{tikzpicture}[scale=.2]
    \draw (-3,-3)--(3,-3)--(3,3)--(-3,3);
    \draw (-3,-3)--(1,-3)--(1,3)--(-3,3);
    \draw[->](-5,-3)--(6.5,-3);
    \draw[->](-3,-5)--(-3,6.5);
    \foreach \x/\y in {
    3/3, 1/3
   } \draw (\x,\y) node {$\bullet$};
  \end{tikzpicture}  
\end{center}
\end{figure}

\begin{Proposition}\label{prop:support}
Let $p\in\mathbb{K}[x,y]$ be near-separated, and let $\omega_1$, $\omega_2\in\mathbb{Z}^2$ be the outward-pointing normals of two distinct edges of its Newton polygon. Then $\mathrm{sign}(w_1)$ is different from $\mathrm{sign}(w_2)$.
\end{Proposition}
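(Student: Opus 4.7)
The plan is to write $p = A(x)B(y) - D(x)C(y)$ with $A, D \in \mathbb{K}[x]$ and $B, C \in \mathbb{K}[y]$, and show that for every non-zero sign vector $s \in \{-1,0,1\}^2$ there is at most one edge of the Newton polygon of $p$ whose outward normal has sign vector $s$.

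The heart of the argument is the open-quadrant case; take $s = (1,1)$, the other open quadrants being analogous. For any $\omega = (\omega_x, \omega_y)$ with $\omega_x, \omega_y > 0$, both $\mathrm{lp}_\omega(AB)$ and $\mathrm{lp}_\omega(DC)$ reduce to single monomials located at $(\deg A, \deg B)$ and $(\deg D, \deg C)$ respectively. Since $\mathrm{supp}(p) \subseteq \mathrm{supp}(AB) \cup \mathrm{supp}(DC)$, the maximum-$\omega$-weight points of $\mathrm{supp}(p)$ are contained in these two corners. For $\omega$ to be an edge normal, $\mathrm{lp}_\omega(p)$ must involve at least two distinct monomials, which forces the corners to be distinct, to share the same $\omega$-weight, and to both survive in $p$. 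The equation $\omega \cdot ((\deg A, \deg B) - (\deg D, \deg C)) = 0$ then pins $\omega$ down up to a positive scalar, giving at most one such edge.

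The delicate case is when $(\deg A, \deg B) = (\deg D, \deg C)$ and $\mathrm{lc}(A)\mathrm{lc}(B) = \mathrm{lc}(D)\mathrm{lc}(C)$, so that the nominal top monomial of $p$ cancels. After rescaling $D$ and $C$ so that $\mathrm{lc}(A) = \mathrm{lc}(D)$ and $\mathrm{lc}(B) = \mathrm{lc}(C)$, I exploit the identity $p = AB - DC = (A - D)B + D(B - C) = \alpha B - (-D)\beta$, where $\alpha := A - D$ and $\beta := B - C$ have strictly smaller degree. If $\alpha = 0$ or $\beta = 0$, then $p$ is a product of a polynomial in $x$ and one in $y$, so its Newton polygon is an axis-aligned rectangle and has no open-quadrant edge at all. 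Otherwise this rewriting yields a new near-separated representation of $p$ whose top positions $(\deg \alpha, \deg B)$ and $(\deg D, \deg \beta)$ differ in both coordinates, putting it into the non-cancellation situation handled in the previous paragraph.

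For the axis-aligned sign vectors, such as $s = (1, 0)$, the analysis is more direct: $\mathrm{lp}_\omega(AB)$ and $\mathrm{lp}_\omega(DC)$ are supported on the vertical lines $x = \deg A$ and $x = \deg D$ respectively, so $\mathrm{lp}_\omega(p)$ is supported on a single vertical line, and this forces $\omega$ to be a positive multiple of $(1, 0)$. The main obstacle throughout is the cancellation case in the open quadrants; the key insight needed to get past it is the algebraic identity $AB - DC = (A - D)B + D(B - C)$, which converts $p$ into an equivalent near-separated representation without nominal top cancellation.
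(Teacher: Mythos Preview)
Your argument is correct and takes a somewhat different route from the paper's. The paper writes $p = f_n g_d - g_n f_d$ and first normalises by performing a division with remainder on $f_n$ by $f_d$ (shifting the constant quotient into $g$) so that $\deg f_n \neq \deg f_d$; it then splits into three sub-cases according to the sign of $\deg g_d - \deg g_n$ and reads off the upper-right boundary of the Newton polygon explicitly, finding in each case at most one edge whose outward normal has sign vector $(1,1)$. Your proof is more synthetic: you never use the coprimality of $(A,D)$ or of $(B,C)$, you argue directly that for any positive $\omega$ the leading part is supported on at most the two corner points $(\deg A,\deg B)$ and $(\deg D,\deg C)$, and you dissolve the coincident-corner cancellation case with the identity $AB - DC = (A-D)B + D(B-C)$ rather than a division-with-remainder step. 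This yields a cleaner and slightly more general argument (it applies to any expression $A(x)B(y) - D(x)C(y)$, not only those coming from reduced fractions), while the paper's explicit case analysis gives more detailed information about the actual shape of the Newton polygon. One small point worth tightening in your exposition: the assertion that the maximum-$\omega$-weight points of $\mathrm{supp}(p)$ lie among the two corners tacitly uses that at least one of those corners survives in $p$; this is automatic when the corners are distinct (the higher-weight corner cannot lie in the support of the other product, hence cannot be cancelled), but it deserves a sentence.
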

\begin{proof}
If $p\in \mathbb{K}[x] \cup \mathbb{K}[y]$, then the statement is clearly true. So let us assume that $p\in\mathbb{K}[x,y]\setminus\left( \mathbb{K}[x]\cup \mathbb{K}[y]\right)$, and let $f\in\mathbb{K}(x)$ and $g\in\mathbb{K}(y)$ be such that $p = f_ng_d - g_nf_d$. We will show that the Newton polygon of $p$ has at most one edge whose outward pointing normals have the sign vector $(1,1)$. For $(1,-1)$, $(-1,1)$ and $(-1,-1)$ the statement can be proven analogously after replacing $x$ by $x^{-1}$ and/or $y$ by $y^{-1}$ in $p$ and multiplying by suitable powers of $x$ and/or $y$. For $(1,0)$, $(0,1)$, $(-1,0)$ and $(0,-1)$ it is clearly true, since the Newton polygon of $p$ is convex.

Note that we can assume that $\deg f_n \neq \deg f_d$. If this were not the case we could perform a division with remainder on $f_n$ by $f_d$ and move the quotient, which is just a constant, to $g$. This does neither alter $f-g$, nor its numerator $p$. Wlog we assume that $\deg f_n < \deg f_d$. We can now distinguish three cases (compare with the figure above). If $\deg g_d > \deg g_n$, then the upper-right part of the Newton polygon of $p$ consists of (at most) three edges: possibly a horizontal edge, possibly a vertical one, and an edge whose outward pointing normal has only positive coordinates. They are the convex hulls of the supports of $\mathrm{lt}(g_d) f_n$ and $\mathrm{lt}(f_d) g_n$ and $\mathrm{lt}(f_n)\mathrm{lt}(g_d) - \mathrm{lt}(g_n)\mathrm{lt}(f_d)$. If $\deg g_d < \deg g_n$, then the upper-right part of the Newton polygon of $p$ consists of (at most) two edges, possibly a horizontal edge, and possibly a vertical edge, spanned by the supports of $\mathrm{lt}(g_n)f_d$ and $\mathrm{lt}(f_d)g_n$, respectively. If $\deg g_d = \deg g_n$, then the right part of the Newton polygon of $p$ is spanned by the support of $\mathrm{lt}(f_d) g_n$. By performing a division with remainder on $g_n$ by $g_d$ and moving the quotient to $f$ we can write $p = \tilde{f}_ng_d - \tilde{g}_nf_d$ where $\deg \tilde{f}_n = \deg f_d$ and $\deg g_d > \deg \tilde{g}_n$. The upper part of the Newton polygon of $p$ is therefore spanned by the support of $\mathrm{lt}(g_d)\tilde{f}_n$.
\end{proof}

\begin{Lemma}\label{lem:normals}
If $\omega\in\mathbb{R}^2$ is the outward pointing normal of an edge of the Newton polygon of $p\in\mathbb{K}[x,y]$, then it is the outward pointing normal of an edge of the Newton polygon of any non-zero multiple.
\end{Lemma}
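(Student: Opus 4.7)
The plan is to reduce the lemma to a statement about leading parts and exploit the fact that $\mathbb{K}[x,y]$ is an integral domain. Fix an arbitrary non-zero multiplier $q\in\mathbb{K}[x,y]$. First I will establish the multiplicativity identity
\begin{equation*}
\mathrm{lp}_\omega(qp)=\mathrm{lp}_\omega(q)\,\mathrm{lp}_\omega(p);
\end{equation*}
this follows by observing that the terms of maximal $\omega$-weight in $qp$ are precisely the products of terms of maximal $\omega$-weight in $q$ and $p$, together with the fact that the right-hand side is a product of non-zero polynomials in a domain and so is non-zero.

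Next I will translate the geometric hypothesis into an algebraic one. A vector $\omega\in\mathbb{R}^2$ is the outward pointing normal of an edge of the Newton polygon of a polynomial if and only if its leading part with respect to $\omega$ contains at least two monomials: fewer than two would correspond to a vertex. With this reformulation the lemma reduces to showing that whenever $\mathrm{lp}_\omega(p)$ has at least two terms, so does $\mathrm{lp}_\omega(q)\,\mathrm{lp}_\omega(p)$.

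For this final step I will fix a non-zero integer vector $v$ orthogonal to $\omega$, so that the supports of $\mathrm{lp}_\omega(p)$ and $\mathrm{lp}_\omega(q)$ lie on lines parallel to $v$, and order the monomials of each leading part by their $v$-component. Let $p_+$ and $p_-$ denote the terms of $\mathrm{lp}_\omega(p)$ with maximal and minimal $v$-component, which are distinct by hypothesis, and let $q_+,q_-$ be the analogous (possibly equal) terms for $q$. The products $p_+q_+$ and $p_-q_-$ realise the extremal $v$-components of $\mathrm{lp}_\omega(q)\,\mathrm{lp}_\omega(p)$, are attained uniquely there (no other pair of monomials can compete), and are distinct monomials since $p_+\neq p_-$. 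Hence $\mathrm{lp}_\omega(qp)$ has at least two terms, which is what we wanted.

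The main obstacle I foresee is exactly this extremal argument: one must pick the pair $(p_+,p_-)$ along the edge direction rather than two arbitrary distinct monomials of $\mathrm{lp}_\omega(p)$, because only for the extremes in direction $v$ can one rule out cancellations coming from other pairs of factors in the expansion of $\mathrm{lp}_\omega(q)\,\mathrm{lp}_\omega(p)$.
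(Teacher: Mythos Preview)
Your proof is correct and follows essentially the same approach as the paper: both translate the edge condition into ``$\mathrm{lp}_\omega$ has at least two terms'' and use the multiplicativity $\mathrm{lp}_\omega(qp)=\mathrm{lp}_\omega(q)\,\mathrm{lp}_\omega(p)$. The only difference is in the final step: the paper argues by contrapositive---if $\mathrm{lp}_\omega(qp)$ were a single term, then $\mathrm{lp}_\omega(p)$, being a divisor of a monomial, would be a monomial too---whereas you argue directly via extremal terms along the edge direction; these are equivalent, with the paper's version slightly more concise.
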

\begin{proof}
A vector $\omega\in\mathbb{R}^2$ is an outward pointing normal for an edge of $\mathrm{Newt}(p)$ if and only if $\mathrm{lp}_\omega(p)$ involves at least two terms. If $q\in\mathbb{K}[x,y]\setminus\{0\}$ is such that $\omega$ is not an outward pointing normal for an edge of $\mathrm{Newt}(qp)$, then $\mathrm{lp}_\omega(qp)$ were a single term. But so would then also be $\mathrm{lp}_\omega(p)$, since it is a divisor of $\mathrm{lp}_\omega(qp)$.
\end{proof}

An immediate consequence of this lemma and the previous proposition is the following.

\begin{Corollary}
Let $\omega_1$, $\omega_2\in\mathbb{R}^2$ be the outward-pointing normals of two distinct edges of the Newton polygon of $p\in\mathbb{K}[x,y]$. If $p$ is near-separable, then $\mathrm{sign}(w_1)$ and $\mathrm{sign}(w_2)$ cannot be equal.
\end{Corollary}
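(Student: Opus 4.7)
The corollary should follow almost immediately by combining the preceding Proposition with Lemma~\ref{lem:normals}. The plan is to pass from $p$ to a near-separated multiple $qp$, transport the two normals along, and then invoke the Proposition.

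First I would take $q \in \mathbb{K}[x,y] \setminus \{0\}$ with $qp$ near-separated, which exists by assumption. Then for $i = 1, 2$, Lemma~\ref{lem:normals} ensures that $\omega_i$ remains the outward-pointing normal of an edge of the Newton polygon of $qp$. Second, since distinct edges of a (convex) Newton polygon have distinct outward-pointing normal directions, the edges of the Newton polygon of $qp$ corresponding to $\omega_1$ and $\omega_2$ are themselves distinct (because $\omega_1 \neq \omega_2$ as normals of distinct edges of the polygon of $p$). Finally, since $qp$ is near-separated and $\omega_1, \omega_2$ are outward-pointing normals of two distinct edges of its Newton polygon, the preceding Proposition applied to $qp$ yields $\mathrm{sign}(\omega_1) \neq \mathrm{sign}(\omega_2)$, which is exactly what we want.

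There is really no obstacle here beyond keeping the bookkeeping of ``distinct edges'' versus ``distinct normals'' straight; the main mathematical content has already been extracted into Proposition~\ref{lemma:necessary} (for near-separated polynomials) and Lemma~\ref{lem:normals} (which lifts the property to multiples). The corollary is essentially the combination of these two statements, reading the Proposition contrapositively: multiplying by $q$ cannot destroy the edges of the Newton polygon of $p$, so any obstruction to near-separatedness at the level of edge normals of $p$ is also an obstruction at the level of edge normals of $qp$, and must therefore prevent $qp$ from being near-separated.
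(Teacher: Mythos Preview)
Your argument is correct and is exactly the paper's approach: the paper simply states that the corollary is ``an immediate consequence of this lemma and the previous proposition,'' i.e., pass to a near-separated multiple via Lemma~\ref{lem:normals} and apply the Proposition on sign vectors to $qp$. One small slip: in your closing paragraph you cite Proposition~\ref{lemma:necessary}, but that is the statement about leading parts being near-separable; the proposition you actually use (and correctly describe as ``the preceding Proposition'') is the unlabeled one immediately before Lemma~\ref{lem:normals}.
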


\begin{Example}
The Newton polygon of $p = 1+ x^4+x^3y+y^2$ has two distinct edges whose outward pointing normals have the same sign vector. Hence $p$ is not near-separable.
\begin{figure}
\begin{center}
 \begin{tikzpicture}[scale= 1.0]
\fill[gray] (-3,-3)--(1,-3)--(0,-2)--(-3,-1)--cycle;
    \draw[-](-4,-3)--(2,-3);
    \draw[-](-3,-4)--(-3,0);
    \draw[->](-1.5,-1.5)--(-1.16667,-0.5);
    \draw[->](0.5,-2.5)--(1.20711,-1.79289);
    \foreach \x/\y in {
    -3/-3, -1/-3, 1/-3, 0/-2, -3/-1
} \draw (\x,\y) node {$\bullet$};
    \end{tikzpicture}
    \end{center}
    \begin{center}
\footnotesize{The Newton polygon of $1+x^2+x^4+x^3y+y^2$.}
\end{center}
\end{figure}

\end{Example}

Let $p\in\mathbb{K}[x,y]\setminus\left( \mathbb{K}[x] \cup \mathbb{K}[y] \right)$ be irreducible. We now turn to the question of how to compute a generator of $\mathrm{F}(p)$ when $p$ has a non-trivial rational multiple in $\mathbb{K}(x)+\mathbb{K}(y)$. 

Recall that a \textbf{pole} $s$ of $f\in\mathbb{K}(x)$ is either a finite or $\infty$. It is finite, if it is a root of $f_d$, and it is $\infty$, if $\deg f_n > \deg f_d$. If $s$ is a finite pole of $f$, then its \textbf{multiplicity} is its order as a root of $f_d$. If it is $\infty$, then its order $\deg f_n - \deg f_d$. In any case, its multiplicity is denoted by $\mathrm{m}(s,f)$.
 
 Assume that 
\begin{equation*}
qp = f - g
\end{equation*}
for (unknown) non-zero $q\in\mathbb{K}[x,y]_p$, $f\in\mathbb{K}(x)$ and $g\in\mathbb{K}(y)$. In order to find $f$, $g$ and $q$, it is sufficient to determine the poles of $f$ and $g$ and their multiplicities. For then we know the denominators of~$f$ and~$g$ and the degrees of their numerators, and by the following lemma, also the denominator of $q$ and the degree of its numerator.
\begin{Lemma}
Let $p\in\mathbb{K}[x,y]\setminus\left( \mathbb{K}[x] \cup \mathbb{K}[y]\right)$ be irreducible, and let $q\in\mathbb{K}[x,y]_p\setminus\{0\}$, $f\in\mathbb{K}(x)$ and $g\in\mathbb{K}(y)$ be such that $qp = f-g$. Then $q_d = f_d g_d$.
\end{Lemma}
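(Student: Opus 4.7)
The plan is to put both sides of $qp = f - g$ into reduced form and compare denominators. Writing $q = q_n/q_d$ with $\gcd(q_n, q_d) = 1$, the hypothesis $q\in\mathbb{K}[x,y]_p$ gives $p \nmid q_d$, and since $p$ is irreducible this means $\gcd(p, q_d) = 1$ in $\mathbb{K}[x,y]$. Hence $\gcd(q_n p, q_d) = 1$, so $qp$ in reduced form has numerator $q_n p$ and denominator $q_d$.

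For the right hand side, the natural representative is
\begin{equation*}
f - g = \frac{f_n g_d - g_n f_d}{f_d g_d},
\end{equation*}
and the crux will be to show that its numerator and denominator are already coprime. Suppose for contradiction that some irreducible $h \in \mathbb{K}[x,y]$ divides both. Since $h \mid f_d g_d$ and $h$ is irreducible, either $h \mid f_d$ or $h \mid g_d$; by the symmetric roles of the two variables, assume $h \mid f_d$. Combining this with $h \mid f_n g_d - g_n f_d$ gives $h \mid f_n g_d$, and because $\gcd(f_n, f_d) = 1$ forces $\gcd(h, f_n) = 1$, we conclude $h \mid g_d$. But an irreducible divisor of $f_d \in \mathbb{K}[x]$ must itself lie in $\mathbb{K}[x]$, and similarly $h \mid g_d \in \mathbb{K}[y]$ forces $h \in \mathbb{K}[y]$. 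Thus $h \in \mathbb{K}[x] \cap \mathbb{K}[y] = \mathbb{K}$, contradicting irreducibility. Hence the right hand side in reduced form has denominator $f_d g_d$.

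Since $qp$ and $f - g$ are equal as rational functions and both sides are now in reduced form, equating denominators (up to a unit of $\mathbb{K}$ corresponding to the normalization implicit in the notation $r_d$) gives $q_d = f_d g_d$, as claimed. The only non-routine step is the coprimality argument for $f_n g_d - g_n f_d$ and $f_d g_d$; the rest is bookkeeping with reduced forms.
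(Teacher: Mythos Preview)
Your proof is correct and follows essentially the same approach as the paper: show that $qp$ has reduced denominator $q_d$ since $\gcd(p,q_d)=1$, show that $f-g$ has reduced denominator $f_dg_d$ by arguing any common irreducible factor of $f_ng_d-g_nf_d$ and $f_dg_d$ leads to a contradiction, and equate the two. The only cosmetic difference is in how the contradiction is reached---the paper concludes such a factor of $f_d$ would also divide $f_n$, whereas you conclude it would divide $g_d$ and hence lie in $\mathbb{K}[x]\cap\mathbb{K}[y]=\mathbb{K}$; both are immediate.
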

\begin{proof}
The denominator of $qp$ is $q_d$ since $p$ and $q_d$ are relatively prime. And the denominator of $f-g$ is $f_dg_d$, since $f_n$ and $f_d$, and $g_n$ and $g_d$ are relatively prime. Hence $q_d = f_dg_d$.
\end{proof}

By making an ansatz for the numerators of~$q$ and $f$ and $g$, clearing denominators in $qp = f-g$, and comparing coefficients one finds a system of linear equations for the unknowns of the ansatz. Its non-trivial solutions give rise to non-zero rational functions~$f, g$ and $q$ such that $qp = f-g$.

In the next subsection we explain how to compute the poles of a generator of $\mathrm{F}(p)$, and in the subsection thereafter we show how to determine their multiplicities. The last subsection provides the arguments for the correctness of the resulting semi-algorithm.

\subsection{Poles}\label{sub:poles}

We will see that the poles of $f$ and $g$ appear in pairs and correspond to points on the projective curve associated with $p$. Their computation will define a dynamical system or algebraic correspondence thereon, the pairs of poles of $f$ and $g$ constituting an orbit thereof. The orbits are finite whenever $\mathrm{F}(p)$ is non-trivial, that is, isomorphic to $\mathbb{K}$. When $\mathrm{F}(p)$ is trivial, the orbit of a generic point is infinite. Nevertheless, the orbit of a specific point may be finite.    

Let assume that $\mathrm{F}(p)$ is non-trivial, and let $(f,g)$ be a generator for it. We may assume that $\infty$ is a pole of $f$. If $\deg f_n$ were smaller than $\deg f_d$, then we could replace $(f,g)$ by its reciprocal $(f^{-1},g^{-1})$, and if $\deg f_n$ and $\deg f_d$ were equal, then we could perform a division with remainder on $f_n$ by $f_d$, and move the quotient, which is just a constant, to $g$, and then consider the corresponding reciprocal.

Assuming that $\infty$ is a pole of $f$, we find that $\mathrm{lc}_x(p)$ is a divisor of $g_d$. Hence, each root of $\mathrm{lc}_x(p)$ is a pole of $g$. Furthermore, if $\deg \mathrm{lc}_x(p) < \deg_y p$, then $\infty$ is a pole of $g$. This holds, since if $\deg \mathrm{lc}_x(p) < \deg_y p$, then the Newton polygon of $p$ has an edge whose outward pointing normals have only positive coordinates. By Lemma~\ref{lem:normals}, this is also true for the Newton polygon of $f_ng_d - g_nf_d$. Hence $\deg g_n > \deg g_d$. If $s$ is a finite pole of $f$, then $p(s,y)$ is a divisor of $g_d$ and each root of $p(s,y)$ is a pole of $g$. If $s$ is a pole of $f$ and $\deg p(s,y) < \deg_y p(x+s,y)$, then an argument similar as before shows that $\infty$ is a pole of $g$. Of course, the situation is analogous with the roles of $f$ and $g$ switched.

Let $C$ be the projective curve in $\mathbb{P}^1(\mathbb{K})\times \mathbb{P}^1(\mathbb{K})$ associated with the bi-homogenization 
\begin{equation*}
p^{\mathrm{bihom}}([x_0:x_1],[y_0:y_1]) = x_0^{\deg_x p} y_0^{\deg_y p} p(x_1/x_0,y_1/y_0)
\end{equation*}
of $p$. The observations just made give rise to a procedure for computing the poles of $f$ and $g$ that can be interpreted as a dynamical system on $C$. It starts with those points of $C$ whose first coordinate is $\infty$, takes the horizontal lines through them and intersects them with the curve, then takes the vertical lines through these intersection points and determines their intersections, and continues in this way ad infinitum. The points constructed in this way constitute what we call the orbit of $\infty$.

\begin{Definition}
Let $\sim$ be the smallest equivalence relation on $C$ such that $(a_0,b_0) \sim (a_1,b_1)$ whenever $a_0 = a_1$ or $b_0 = b_1$. The equivalence class of $(a_0,b_0)$ is called the \textbf{orbit} of $(a_0,b_0)$. The orbit of any point whose first coordinate is $\infty$ will be referred to as the orbit at $\infty$, and for notational convenience just denoted by $\mathcal{O}$.
\end{Definition}

Under the assumption that $\mathrm{F}(p) \ncong \mathbb{K}$, there are only finitely many points that can be constructed in this way, because each point encountered in this process is a pair of poles for $f$ and $g$, and $f$ and $g$ have only finitely many poles. If $\mathrm{F}(p) \cong \mathbb{K}$, then there may be infinitely many such points, and then the procedure does not terminate. 

Let $\pi_i: \mathbb{P}^1(\mathbb{K})\times \mathbb{P}^1(\mathbb{K}) \rightarrow \mathbb{P}^1(\mathbb{K})$ denote the projection on the $i$-th coordinate.

\begin{Theorem}
Assume that $\mathrm{F}(p) \ncong \mathbb{K}$. Then there is a generator $(f,g)$ of $\mathrm{F}(p)$ such that the elements of $\pi_1(\mathcal{O})$ and $\pi_2(\mathcal{O})$ are among the poles of $f$ and $g$, respectively. In particular, $\mathcal{O}$ is finite.
\end{Theorem}

It is natural to ask whether $\mathcal{O}$ provides all the poles of a generator of $\mathrm{F}(p)$. We will later see that this is indeed the case, and thereby prove the correctness of the following algorithm.

Let $a\in\mathbb{P}^1(\mathbb{K})$, and define
\begin{equation*}
C(a) = C \cap \left(\{a\}\times \mathbb{P}^1(\mathbb{K})\right) \quad \text{ and } \quad {^{\mathrm{tr}}C}(a) = C \cap \left(\mathbb{P}^1(\mathbb{K}) \times \{a\}\right).
\end{equation*}

\begin{Algorithm}\label{alg:1}
  Input: an irreducible polynomial $p\in\K [x,y]\setminus \left( \mathbb{K}[x] \cup \mathbb{K}[y] \right)$ which has a non-trivial rational multiple in $\mathbb{K}(x) + \mathbb{K}(y)$.\\
  Output: a subset $S$ of $\left( \mathbb{K}\cup \{\infty\}\right)^2$ whose elements are points of the curve defined by $p$ such that $\pi_1(S)$ is the set of poles of some $f\in\mathbb{K}(x)$ and $\pi_2(S)$ is the set of poles of some $g\in\mathbb{K}(y)$ such that $\mathrm{F}(p)= \mathbb{K}((f,g))$.
  \step 10 Compute $S = {^{\mathrm{tr}}C}(\pi_2(C(\infty)))$, and
  \step 20 determine $\tilde{S} = {^{\mathrm{tr}}C}(\pi_2(C(\pi_1(S))))$.
  \step 30 While $\tilde{S} \neq S$, do:
  \step 41 set $S = \tilde{S}$, and
  \step 51 compute $\tilde{S} = {^{\mathrm{tr}}C}(\pi_2(C(\pi_1(S)))$.
  \step 60 Return $S$.
\end{Algorithm}

When $\mathrm{F}(p) \cong \mathbb{K}$, then the orbit of a generic point is infinite~\cite[Theorem~1]{fried1978poncelet}, and Algorithm~\ref{alg:1} may not terminate on input $p$, because $\mathcal{O}$ may not be finite. Proposition~\ref{lemma:necessary} and Proposition~\ref{prop:support} give two simple criteria for $\mathrm{F}(p)$ being trivial. But they do not provide a characterization thereof as~\cite[Theorem~3]{bousquet2010walks} and~\cite[Lemma~2.4]{hardouin2021differentially} show. Therefore Algorithm~\ref{alg:1} cannot be expected to terminate, when these criteria fail to apply. When $\mathcal{O}$, or any other orbit, is finite, we will later explain how a generator of $\mathrm{F}(p)$ can be computed. In particular, we will see how the non-triviality of $\mathrm{F}(p)$ can be decided. In general, however, this is still an open problem. There is some work on the particular case when $p$ is irreducible, of degree $2$ in both $x$ and $y$, and defines a curve of genus~$1$~\cite[Remark~5.1]{hardouin2021differentially}. In that case any orbit has either less than $24$ elements, or it is infinite. 

\subsection{Multiplicities}\label{sub:mult}

We have seen that the poles of $f$ and $g$ appear in pairs that correspond to points on the curve defined by $p$. In the following we will analyze the behavior of $p$ at these points. The guiding idea is that at a given point certain terms of $p$ contribute more than others. By discarding those which are negligible, a homogeneous polynomial remains. In Section~\ref{section:reduction} we explained how Problem~\ref{prob:sep} can be solved for homogeneous polynomials. For each point its solution will provide a $1$-parameter family for the multiplicities of the corresponding poles. These $1$-parameter families will merge to a single $1$-parameter family. We will then explain which parameter gives rise to the multiplicities of the poles of a generator of $\mathrm{F}(p)$.

Let us start with the following lemma.

\begin{Lemma}\label{lemma:weights}
Let $p \in\mathbb{K}[x,y]\setminus\left( \mathbb{K}[x] \cup \mathbb{K}[y] \right)$ be irreducible, and let $(s_1,s_2)\in\{0,\infty\}^2$ be a root (of the bi-homogenization) of $p$. Then there is an $\omega\in\mathbb{R}^2$ whose coordinates $\omega_x$ and $\omega_y$ are positive or negative depending on whether $s_1$ and $s_2$ equals $\infty$ or $0$, respectively, and $\mathrm{lp}_\omega(p)$ involves at least two terms.
\end{Lemma}

Assume that $qp = f - g$ and that $(s_1,s_2)\in\{0,\infty\}^2$ is a pair of poles of $f$ and $g$ that is a root of $p$. Furthermore, let $\omega\in\mathbb{Z}^2$ be a weight function as in Lemma~\ref{lemma:weights}. Since $\mathrm{lp}_\omega(p)$ involves at least two terms, so does $\mathrm{lp}_\omega(qp) = \mathrm{lp}_\omega(q)\mathrm{lp}_\omega(p)$. Therefore,
\begin{equation*}
\mathrm{lp}_\omega(q)\mathrm{lp}_\omega(p) = \mathrm{lp}_\omega(f) - \mathrm{lp}_\omega(g).
\end{equation*}
Hence, if $f_\omega\in\mathbb{K}[x^{\mathrm{sgn}(\omega_x)}]$ and $g_\omega\in\mathbb{K}[y^{\mathrm{sgn}(\omega_y)}]$ are such that
\begin{equation*}
\mathrm{F}(\mathrm{lp}_\omega(p)) = \mathbb{K}((f_\omega,g_\omega)),
\end{equation*}
then there is a positive integer $k \equiv k_{s_1,s_2}$ such that
\begin{equation*}
\mathrm{lp}_\omega(f) - \mathrm{lp}_\omega(g)  = f_\omega^k - g_\omega^k.
\end{equation*}
Up to sign, the degrees of $\mathrm{lp}_\omega(f)$ and $\mathrm{lp}_\omega(g)$ equal $\mathrm{m}(s_1,f)$ and $\mathrm{m}(s_2,g)$, respectively. So the above equation implies that
\begin{equation*}
(\mathrm{m}(s_1,f),\mathrm{m}(s_2,g)) = k \cdot \left(| \deg f_\omega | , | \deg g_\omega | \right).
\end{equation*}
We assumed that $s_1$, $s_2\in\{0,\infty\}$. If this is not the case, we can move the poles to $0$, and consider
\begin{equation*}
p_{s_1,s_2}(x,y) = p\left(x+s_1\cdot [s_1\in \mathbb{K}],y + s_2\cdot [s_2\in \mathbb{K}]\right),
\end{equation*}
where $[s_i\in \mathbb{K}]$ is $1$ or $0$ depending on whether $s_i$ is an element of $\mathbb{K}$ or not, and argue as before.
For each pair of poles determined by Algorithm~\ref{alg:1}, we can therefore compute their multiplicities up to a multiplicative constant $k$. It turns out that we can derive a system of linear equations for these constants from which all but one can be eliminated. If $(s_1,s_2)$ and $(t_1,t_2)$ are two pairs of poles of $(f,g)$ such that $s_1 = t_1$, then
 \begin{equation}\label{eq:mult}
 k_{s_1,s_2} \cdot {| \deg f_{\omega_{s_1,s_2}} |} = k_{t_1,t_2} \cdot {| \deg f_{\omega_{t_1,t_2}} |}.
 \end{equation}
An analogous equation holds when $s_2 = t_2$. So there is a linear relation between the unknowns $k_{s_1,s_2}$ and $k_{t_1,t_2}$ whenever $(s_1,s_2)$ and $(t_1,t_2)$ have a common component. The pairs of poles constituting an orbit implies that the solution space of these equations is at most $1$-dimensional. It is not $0$-dimensional since we assumed that $f$ and $g$ are not constants.

One could hope that each choice of parameter gives rise to the multiplicities of the poles of a multiple of $p$ in $\mathbb{K}(x) + \mathbb{K}(y)$. We will later see that this is indeed the case. We we will later also see which parameter gives rise to the multiplicities of the poles of a generator of $\mathrm{F}(p)$. For now we just formulate the algorithm these observations give rise to and illustrate it with an example.

\begin{Algorithm}\label{alg:2}
  Input: an irreduciblel $p\in\K [x,y]\setminus \left( \mathbb{K}[x] \cup \mathbb{K}[y] \right)$ with 
  $\mathrm{F}(p) \ncong \mathbb{K}$.\\
  Output: a generator $(f,g)$ of $\mathrm{F}(p)$.
  \step 10 Call Algorithm~\ref{alg:1} on $p$, and let $S$ be its output.
  \step 20 Set $M = \emptyset$, $E = \emptyset$.
  \step 30 For each $(s_1,s_2)\in S$ do:
  \step 41 compute a generator $(f_{\omega_{s_1,s_2}}, g_{\omega_{s_1,s_2}})$ of $\mathrm{F}(\mathrm{lp}_{\omega_{s_1,s_2}}(p))$ and enlarge $M$ by the pair consisting of $(s_1,s_2)$ and $k_{s_1,s_2} \cdot \left(| \deg f_{\omega_{s_1,s_2}} | , | \deg g_{\omega_{s_1,s_2}} |\right)$.
  \step 40 For any two elements $(s_1,s_2)$ and $(t_1,t_2)$ of $S$ do:
  \step 51 if $s_1 = t_1$, append equation~\eqref{eq:mult} to $E$, and if $s_2 = t_2$ replace $f_{\omega_{s_1,s_2}}$ and $f_{\omega_{t_1,t_2}}$ therein by $g_{\omega_{s_1,s_2}}$ and $g_{\omega_{t_1,t_2}}$, respectively, and append it to $E$.
  \step 60 Solve $E$ over $\mathbb{N}$, and substitute the generator of its solution space into $M$.
  \step 70 Determine $f_d$, $g_d$ and $q_d$, and make an ansatz for $f_n$, $g_n$ and $q_n$ according to the poles and multiplicities specified in $M$.
  \step 80 Equate the coefficients in $q_n p - f_ng_d + g_nf_d$ to zero and solve the resulting linear system for them.
  \step 90 Determine a non-trivial $(f,g)$ corresponding to a solution and return it.
\end{Algorithm}

\begin{figure}
\begin{center}
\begin{tikzpicture}[scale=2.]

 \draw [line width=0.25mm] plot coordinates {
(0.00651614, -0.975627) (0.00510973, -0.969395)
(0.00752896, -0.964286) (0.00892857, -0.956587) (0.00900228,
-0.955431) (0.00911774, -0.955168) (0.00580395, -0.946429)
(0.00988552, -0.938457) (0.0107257, -0.935703) (0.00765704,
-0.928571) (0.0109253, -0.92164) (0.0120867, -0.916485)
(0.00947163, -0.910714) (0.0120412, -0.904898) (0.0133433,
-0.897371) (0.0132767, -0.892857) (0.0131977, -0.888198)
(0.0145431, -0.878314) (0.0144621, -0.875) (0.0143768, -0.87152)
(0.0157065, -0.859293) (0.0156385, -0.857143) (0.0155687,
-0.854854) (0.0162234, -0.848214) (0.016844, -0.840299)
(0.0168062, -0.839286) (0.0167675, -0.838196) (0.0178571,
-0.822795) (0.0179162, -0.821488) (0.017915, -0.821429)
(0.0179365, -0.821349) (0.0185596, -0.804274) (0.0185352,
-0.803571) (0.0187693, -0.802659) (0.019224, -0.787081)
(0.0191602, -0.785714) (0.0195832, -0.783988) (0.0199062,
-0.769906) (0.0197905, -0.767857) (0.0203837, -0.765331)
(0.0206038, -0.752747) (0.0208811, -0.75) (0.0211751, -0.746682)
(0.0213154, -0.735601) (0.0216303, -0.732143) (0.0219602,
-0.72804) (0.0220067, -0.723214) (0.0220397, -0.718468)
(0.0217164, -0.714286) (0.0217645, -0.700336) (0.0223713,
-0.696429) (0.0230255, -0.69126) (0.0230333, -0.678571)
(0.0230051, -0.665862) (0.0237029, -0.660714) (0.0244425,
-0.654129) (0.0243806, -0.642857) (0.0258542, -0.63486) (0.025837,
-0.63298) (0.0250666, -0.625) (0.0266347, -0.616222) (0.0266309,
-0.615917) (0.0267857, -0.612504) (0.0269698, -0.607143)
(0.0272308, -0.598659) (0.027317, -0.597683) (0.026466, -0.589286)
(0.0277953, -0.581367) (0.0279796, -0.579163) (0.0271802,
-0.571429) (0.02703, -0.562744) (0.0279048, -0.553571) (0.0287927,
-0.542636) (0.0286401, -0.535714) (0.0284779, -0.528478)
(0.0293865, -0.517857) (0.0302952, -0.505419) (0.0301445, -0.5)
(0.0299848, -0.49427) (0.0309143, -0.482143) (0.0318287,
-0.468171) (0.0316962, -0.464286) (0.0315552, -0.460127)
(0.0324903, -0.446429) (0.033392, -0.430894) (0.0332964,
-0.428571) (0.0331936, -0.426051) (0.0341144, -0.410714)
(0.0349797, -0.393592) (0.0349434, -0.392857) (0.0350173,
-0.39216) (0.0357143, -0.376325) (0.0357616, -0.375) (0.0362713,
-0.3577) (0.0363537, -0.357143) (0.0364692, -0.356388) (0.0369537,
-0.340525) (0.0369605, -0.339286) (0.037179, -0.337821)
(0.0375721, -0.323286) (0.0375807, -0.321429) (0.0378973,
-0.319246) (0.0382031, -0.30606) (0.0382124, -0.303571) (0.038621,
-0.300665) (0.038844, -0.288844) (0.0388525, -0.285714)
(0.0393447, -0.282084) (0.0394905, -0.271633) (0.0394959,
-0.267857) (0.0400608, -0.263511) (0.0401361, -0.254422)
(0.0401353, -0.25) (0.0407381, -0.244976) (0.0407594, -0.232143)
(0.041422, -0.226435) (0.041382, -0.219953) (0.0413521, -0.214286)
(0.0420294, -0.207971) (0.0419468, -0.202661) (0.041889,
-0.196429) (0.0425498, -0.189593) (0.0424339, -0.185291)
(0.0423347, -0.178571) (0.0429401, -0.171346) (0.0427948,
-0.167795) (0.0426367, -0.160714) (0.0431406, -0.153288)
(0.0429547, -0.150098) (0.042717, -0.142857) (0.0433103,
-0.135261) (0.0424592, -0.125) (0.0412902, -0.112719) (0.041689,
-0.107143) (0.0419113, -0.100946) (0.0401444, -0.0892857)
(0.0374303, -0.0731446) (0.0374337, -0.0714286) (0.0373575,
-0.0697853) (0.0357143, -0.0636975) (0.0328026, -0.0535714)
(0.0292379, -0.0421907) (0.0256238, -0.0357143) (0.0178571,
-0.021942) (0.0162865, -0.0194278) (0.0150779, -0.0178571)
(0.00854416, -0.00931299) (4.71151*10^-15, -5.1556*10^-15)
(2.22045*10^-16, -2.22045*10^-16) (1.04083*10^-17,
  1.04083*10^-17) (-2.22045*10^-16, 2.22045*10^-16) (-5.1556*10^-15,
   4.71151*10^-15) (-0.00931299, 0.00854416) (-0.0178571,
  0.0150779) (-0.0194278, 0.0162865) (-0.0219304,
  0.0178571) (-0.0267857, 0.0208713) (-0.0305327,
  0.0230387) (-0.0334267, 0.0244982) (-0.0357143,
  0.0256923) (-0.0380014, 0.0267857) (-0.04252,
  0.0289086) (-0.0446429, 0.0296383) (-0.0488996,
  0.0310425) (-0.0535714, 0.0328026) (-0.0636975,
  0.0357143) (-0.0698776, 0.0372653) (-0.0714286,
  0.0374337) (-0.0731446, 0.0374303) (-0.0892857,
  0.0401444) (-0.100946, 0.0419113) (-0.107143,
  0.041689) (-0.112719, 0.0412902) (-0.125, 0.0424592) (-0.135261,
  0.0433103) (-0.142857, 0.042717) (-0.150098,
  0.0429547) (-0.153288, 0.0431406) (-0.160714,
  0.0426367) (-0.167795, 0.0427948) (-0.171346,
  0.0429401) (-0.178571, 0.0423347) (-0.184463,
  0.0416056) (-0.196429, 0.041889) (-0.202661,
  0.0419468) (-0.207971, 0.0420294) (-0.214286,
  0.0413521) (-0.219953, 0.041382) (-0.226435,
  0.041422) (-0.232143, 0.0407594) (-0.2372, 0.0407714) (-0.244956,
   0.0407585) (-0.25, 0.0401353) (-0.254422, 0.0401361) (-0.263511,
   0.0400608) (-0.267857, 0.0394959) (-0.271633,
  0.0394905) (-0.282084, 0.0393447) (-0.285714,
  0.0388525) (-0.288844, 0.038844) (-0.300665,
  0.038621) (-0.303571, 0.0382124) (-0.30606,
  0.0382031) (-0.319246, 0.0378973) (-0.321429,
  0.0375807) (-0.323059, 0.0373443) (-0.339286,
  0.0369605) (-0.340525, 0.0369537) (-0.356388,
  0.0364692) (-0.357143, 0.0363537) (-0.3577, 0.0362713) (-0.375,
  0.0357616) (-0.376325, 0.0357143) (-0.39216,
  0.0350173) (-0.392857, 0.0349434) (-0.393592,
  0.0349797) (-0.410714, 0.0341144) (-0.426051,
  0.0331936) (-0.428571, 0.0332964) (-0.430894,
  0.033392) (-0.446429, 0.0324903) (-0.460127,
  0.0315552) (-0.464286, 0.0316962) (-0.468171,
  0.0318287) (-0.482143, 0.0309143) (-0.49427, 0.0299848) (-0.5,
  0.0301445) (-0.505419, 0.0302952) (-0.517857,
  0.0293865) (-0.528478, 0.0284779) (-0.535714,
  0.0286401) (-0.542636, 0.0287927) (-0.553571,
  0.0279048) (-0.562744, 0.02703) (-0.571429,
  0.0271802) (-0.579163, 0.0279796) (-0.581367,
  0.0277953) (-0.589286, 0.026466) (-0.597683,
  0.027317) (-0.598659, 0.0272308) (-0.607143,
  0.0269698) (-0.612504, 0.0267857) (-0.615917,
  0.0266309) (-0.616222, 0.0266347) (-0.625, 0.0250666) (-0.63298,
  0.025837) (-0.63486, 0.0258542) (-0.642857,
  0.0243806) (-0.654129, 0.0244425) (-0.660714,
  0.0237029) (-0.665862, 0.0230051) (-0.678571,
  0.0230333) (-0.69126, 0.0230255) (-0.696429,
  0.0223713) (-0.700336, 0.0217645) (-0.714286,
  0.0217164) (-0.718468, 0.0220397) (-0.72804,
  0.0219602) (-0.732143, 0.0210682) (-0.735601,
  0.0213154) (-0.746682, 0.0211751) (-0.75, 0.0208811) (-0.752747,
  0.0206038) (-0.765331, 0.0203837) (-0.767857,
  0.0197905) (-0.769906, 0.0199062) (-0.783988,
  0.0195832) (-0.785714, 0.0191602) (-0.787081,
  0.019224) (-0.802659, 0.0187693) (-0.803571,
  0.0185352) (-0.804274, 0.0185596) (-0.821349,
  0.0179365) (-0.821429, 0.017915) (-0.821488,
  0.0179162) (-0.822795, 0.0178571) (-0.838196,
  0.0167675) (-0.839286, 0.0163695) (-0.840299,
  0.016844) (-0.854854, 0.0155687) (-0.857143,
  0.0156385) (-0.859293, 0.0157065) (-0.87152, 0.0143768) (-0.875,
  0.0144621) (-0.878314, 0.0145431) (-0.888198,
  0.0131977) (-0.892857, 0.0132767) (-0.897371,
  0.0133433) (-0.904898, 0.0120412) (-0.910714,
  0.00947163) (-0.916485, 0.0120867) (-0.92164,
  0.0109253) (-0.928571, 0.00765704) (-0.935703,
  0.0107257) (-0.938457, 0.00988552) (-0.946429,
  0.00580395) (-0.955168, 0.00911774) (-0.955431,
  0.00900228) (-0.956587, 0.00892857) (-0.964286,
  0.00752896) (-0.969395, 0.00510973)
(-0.975006, -0.00713674) (-0.970211, -0.00592538) (-0.964286,
-0.00887522) (-0.963986, -0.00892857) (-0.955815, -0.00938614)
(-0.954173, -0.0101125) (-0.946429, -0.00657594) (-0.9392,
-0.0106291) (-0.934293, -0.0121355) (-0.928571, -0.00906655)
(-0.922853, -0.0121389) (-0.914566, -0.0140058) (-0.910714,
-0.0117375) (-0.906701, -0.0138443) (-0.894847, -0.0158674)
(-0.892857, -0.0157968) (-0.890721, -0.0157213) (-0.883929,
-0.0167689) (-0.875083, -0.0177741) (-0.875, -0.01777) (-0.874909,
-0.0177658) (-0.874006, -0.0178571) (-0.858083, -0.0187974)
(-0.857143, -0.0187958) (-0.855762, -0.0192376) (-0.848214,
-0.0196165) (-0.841306, -0.0198778) (-0.839286, -0.0202486)
(-0.836471, -0.0206718) (-0.824625, -0.0210535) (-0.821429,
-0.0215607) (-0.817162, -0.0221233) (-0.808038, -0.0223239)
(-0.803571, -0.0220098) (-0.797819, -0.0236099) (-0.791549,
-0.0236916) (-0.785714, -0.0231871) (-0.778426, -0.0251458)
(-0.775162, -0.0251621) (-0.767857, -0.024426) (-0.758971,
-0.0267431) (-0.758886, -0.0267429) (-0.758427, -0.0267857)
(-0.75, -0.0273217) (-0.74204, -0.0277541) (-0.739783, -0.0280743)
(-0.732143, -0.0271147) (-0.725252, -0.0288236) (-0.720561,
-0.0294385) (-0.714286, -0.0285798) (-0.708545, -0.0299738)
(-0.701288, -0.0308549) (-0.696429, -0.031054) (-0.691925,
-0.0312107) (-0.681952, -0.0323338) (-0.678571, -0.0324502)
(-0.675399, -0.0325422) (-0.662543, -0.0338858) (-0.660714,
-0.0335778) (-0.658978, -0.0339779) (-0.643049, -0.0355223)
(-0.642857, -0.035488) (-0.642673, -0.0355299) (-0.640738,
-0.0357143) (-0.633929, -0.0361966) (-0.625985, -0.0366996)
(-0.625, -0.0367095) (-0.623709, -0.0370052) (-0.616071,
-0.0375031) (-0.609325, -0.0378961) (-0.607143, -0.0381901)
(-0.604321, -0.0385365) (-0.592763, -0.0391919) (-0.589286,
-0.0391772) (-0.584841, -0.0401593) (-0.576313, -0.0405991)
(-0.571429, -0.0405457) (-0.565255, -0.041888) (-0.55999,
-0.0421326) (-0.553571, -0.0420202) (-0.545547, -0.0437388)
(-0.543811, -0.0438105) (-0.535714, -0.0436157) (-0.526364,
-0.0450641) (-0.517857, -0.0453504) (-0.511053, -0.0467678)
(-0.506097, -0.0474745) (-0.5, -0.0472463) (-0.494754, -0.048325)
(-0.486203, -0.049511) (-0.482143, -0.0493304) (-0.478617,
-0.0500459) (-0.466125, -0.0517316) (-0.464286, -0.051636)
(-0.462673, -0.0519589) (-0.450386, -0.0535714) (-0.44676,
-0.0539024) (-0.446429, -0.053984) (-0.445903, -0.0540966)
(-0.428571, -0.0558449) (-0.414206, -0.0570635) (-0.410714,
-0.0579195) (-0.405189, -0.0590967) (-0.392857, -0.0602513)
(-0.384093, -0.0623357) (-0.383792, -0.0623635) (-0.382995,
-0.0625) (-0.375, -0.0636329) (-0.368105, -0.0645335) (-0.362971,
-0.0656009) (-0.357143, -0.065929) (-0.352697, -0.0669829)
(-0.341373, -0.0693416) (-0.339286, -0.0694486) (-0.337685,
-0.0698274) (-0.330017, -0.0714286) (-0.322732, -0.072732)
(-0.321429, -0.0729586) (-0.319227, -0.0736298) (-0.307812,
-0.0756695) (-0.303571, -0.0764154) (-0.296349, -0.0786513)
(-0.293453, -0.0791671) (-0.285714, -0.0805453) (-0.279066,
-0.0826379) (-0.272379, -0.084764) (-0.267857, -0.085579)
(-0.26505, -0.0864787) (-0.25648, -0.0892857) (-0.25146,
-0.0907454) (-0.25, -0.0912056) (-0.246707, -0.0925783)
(-0.238034, -0.095177) (-0.232143, -0.0971244) (-0.225154,
-0.100154) (-0.217855, -0.103573) (-0.214286, -0.104817)
(-0.212685, -0.105542) (-0.209463, -0.107143) (-0.200508,
-0.111222) (-0.196429, -0.113338) (-0.18909, -0.117662) (-0.17984,
-0.123732) (-0.178571, -0.12445) (-0.178235, -0.124664)
(-0.177766, -0.125) (-0.167634, -0.13192) (-0.160714, -0.137415)
(-0.157732, -0.139875) (-0.15455, -0.142857) (-0.148483,
-0.148483) (-0.142857, -0.15455) (-0.139875, -0.157732)
(-0.137415, -0.160714) (-0.13192, -0.167634) (-0.125, -0.177766)
(-0.124664, -0.178235) (-0.12445, -0.178571) (-0.123732, -0.17984)
(-0.117662, -0.18909) (-0.113338, -0.196429) (-0.111222,
-0.200508) (-0.107143, -0.209463) (-0.105542, -0.212685)
(-0.104817, -0.214286) (-0.103573, -0.217855) (-0.100154,
-0.225154) (-0.0971244, -0.232143) (-0.095177, -0.238034)
(-0.0925783, -0.246707) (-0.0912056, -0.25) (-0.0907454, -0.25146)
(-0.0892857, -0.25648) (-0.0864787, -0.26505) (-0.085579,
-0.267857) (-0.084764, -0.272379) (-0.0826379, -0.279066)
(-0.0805453, -0.285714) (-0.0791671, -0.293453) (-0.0786513,
-0.296349) (-0.0764154, -0.303571) (-0.0756695, -0.307812)
(-0.0736298, -0.319227) (-0.0729586, -0.321429) (-0.072732,
-0.322732) (-0.0714286, -0.330017) (-0.0698274, -0.337685)
(-0.0694486, -0.339286) (-0.0693416, -0.341373) (-0.0669829,
-0.352697) (-0.065929, -0.357143) (-0.0656009, -0.362971)
(-0.0645335, -0.368105) (-0.0636329, -0.375) (-0.0625, -0.382995)
(-0.0623635, -0.383792) (-0.0623357, -0.384093) (-0.0602513,
-0.392857) (-0.0590967, -0.405189) (-0.0579195, -0.410714)
(-0.0570635, -0.414206) (-0.0558449, -0.428571) (-0.0540966,
-0.445903) (-0.053984, -0.446429) (-0.0539024, -0.44676)
(-0.0535714, -0.450386) (-0.0519589, -0.462673) (-0.051636,
-0.464286) (-0.0517316, -0.466125) (-0.0500459, -0.478617)
(-0.0493304, -0.482143) (-0.049511, -0.486203) (-0.048325,
-0.494754) (-0.0472463, -0.5) (-0.0474745, -0.506097) (-0.0467678,
-0.511053) (-0.0453504, -0.517857) (-0.045595, -0.525834)
(-0.0453514, -0.527494) (-0.0436157, -0.535714) (-0.0438105,
-0.543811) (-0.0437388, -0.545547) (-0.0420202, -0.553571)
(-0.0421326, -0.55999) (-0.041888, -0.565255) (-0.0405457,
-0.571429) (-0.0405991, -0.576313) (-0.0401593, -0.584841)
(-0.0391772, -0.589286) (-0.0391919, -0.592763) (-0.0385365,
-0.604321) (-0.0381901, -0.607143) (-0.0378961, -0.609325)
(-0.0370052, -0.623709) (-0.0368391, -0.625) (-0.0366996,
-0.625985) (-0.0361966, -0.633929) (-0.0357143, -0.640738)
(-0.0355299, -0.642673) (-0.035488, -0.642857) (-0.0355223,
-0.643049) (-0.0339779, -0.658978) (-0.0335778, -0.660714)
(-0.0338858, -0.662543) (-0.0325422, -0.675399) (-0.0324502,
-0.678571) (-0.0323338, -0.681952) (-0.0312107, -0.691925)
(-0.0301377, -0.696429) (-0.0308549, -0.701288) (-0.0299738,
-0.708545) (-0.0297391, -0.714286) (-0.0294385, -0.720561)
(-0.0288236, -0.725252) (-0.0271147, -0.732143) (-0.0280743,
-0.739783) (-0.0277541, -0.74204) (-0.0257328, -0.75) (-0.0267429,
-0.758886) (-0.0267431, -0.758971) (-0.024426, -0.767857)
(-0.0251621, -0.775162) (-0.0251458, -0.778426) (-0.0231871,
-0.785714) (-0.0236916, -0.791549) (-0.0236099, -0.797819)
(-0.0220098, -0.803571) (-0.0223239, -0.808038) (-0.0221233,
-0.817162) (-0.0215607, -0.821429) (-0.0210535, -0.824625)
(-0.0206718, -0.836471) (-0.0198187, -0.839286) (-0.0198778,
-0.841306) (-0.0192376, -0.855762) (-0.0187958, -0.857143)
(-0.0187974, -0.858083) (-0.0178571, -0.874006) (-0.0177658,
-0.874909) (-0.01777, -0.875) (-0.0177741, -0.875083) (-0.0157213,
-0.890721) (-0.0157968, -0.892857) (-0.0158674, -0.894847)
(-0.0138443, -0.906701) (-0.0117375, -0.910714) (-0.0140058,
-0.914566) (-0.0121389, -0.922853) (-0.00906655, -0.928571)
(-0.0121355, -0.934293) (-0.0106291, -0.9392) (-0.00657594,
-0.946429) (-0.0101125, -0.954173) (-0.00938614, -0.955815)
(-0.00892857, -0.963986) (-0.00887522, -0.964286) (-0.00592538,
-0.970211) (-0.00713674, -0.975006) (-0.00420609, -0.982143)
};
   \draw  [line width=0.25mm] plot coordinates {
};
\draw [color = gray, line width = 0.25mm] (-1, 0) -- (0.75, 0);
\draw [color = gray, line width = 0.25mm] (0, -1) -- (0, 0.75);

\foreach \Point in {(-1,0),  (0,0), (0,-1)}{
    \node [color=red] at \Point {\textbullet};
}

\end{tikzpicture}
\end{center}
\begin{center}
\footnotesize{The curve defined by $xy-x-y-x^2y^2$ and the points on it defined by the dynamical system}
\end{center}
\end{figure}

\begin{Example}\label{ex:separable}
Let us again consider the polynomial $p = xy-x-y-x^2y^2$ from Example~\ref{ex:1}. We already know that it is near-separable, and we know how a generator of $\mathrm{F}(p)$ looks like. Still, let us compute this generator $(f,g)$ again, now using the ideas presented in this section. We first determine the poles of $f$ and $g$. They appear in pairs, and are points on the curve defined by $p$. Among them is $(\infty,0)$, since $\mathrm{lc}_x(p) = - y^2$, and $(0,0)$ and $(0,\infty)$ as $p(x,0) = -x$ and $\deg  p(0,y) < \deg_y p$, respectively. Since $p(0,y) = -y$, there are no further such pairs. See the figure above for a drawing of the curve and the pairs of poles on it. Next, we derive information on their multiplicities. For each pair $(s_1,s_2)$ just found, there is a weight function $\omega$ whose $i$-th component is positive or negative, depending on whether $s_i$ is $\infty$ or not, such that $\mathrm{lp}_{\omega}(p)$ consists of at least two terms. They are $(2,-1)$, $(-1,-1)$ and $(-1,2)$, and the corresponding leading parts are
\begin{equation*}
\mathrm{lp}_{(2,-1)}(p) = -x-x^2y^2, \quad \mathrm{lp}_{(-1,-1)}(p) = -x-y, \quad \mathrm{lp}_{(-1,2)}(p) = -y-x^2y^2.
\end{equation*}
For each of them, we solve Problem~\ref{prob:gen}. Since each leading part $\mathrm{lp}_\omega(p)$ is already near-separated, that is, of the form $f_ng_d - g_nf_d$, we find that $(f,g)$ is a generator of $\mathrm{F}(\mathrm{lp}_\omega(p))$. The fields of separated multiples for the leading parts are
\begin{equation*}
\mathbb{K}((x,-y^{-2})), \quad \mathbb{K}((x,-y)) \quad \text{and} \quad  \mathbb{K}((x^{-2},-y)).
\end{equation*}
Their generators show that there are $k_1,k_2,k_3\in\mathbb{N}$ such that
\begin{equation*}
\mathrm{m}(\infty,0) = k_1 \cdot (1,2), \quad
\mathrm{m}(0,0) = k_2 \cdot (1,1) \quad \text{and} \quad
\mathrm{m}(0,\infty) = k_3 \cdot (2,1).
\end{equation*}
The numbers $k_1,k_2$ and $k_3$ are not independent from each other, there are linear relations between them, since the second components of $\mathrm{m}(\infty,0)$ and $\mathrm{m}(0,0)$ and the first components of $\mathrm{m}(0,0)$ and $\mathrm{m}(0,\infty)$ are the same. We have $2k_1 = k_2$ and $2k_3 = k_2$. The solutions $(k_1,k_2,k_3)$ of these equations over $\mathbb{N}$ are positive multiples of $(1,2,1)$, and so there is a $k\in\mathbb{N}$ such that the multiplicities of $\infty$ and $0$ as poles of both $f$ and $g$ are $k$ and $2k$, respectively.
One could hope that $k = 1$ gives the multiplicities for a generator of $\mathrm{F}(p)$. Indeed, making the ansatz
\begin{equation*}
f = \frac{f_0 + f_1 x + f_2 x^2 + f_3x^3}{x^2} \quad \text{and} \quad g = \frac{g_0 + g_1 y + g_2y^2 + g_3y^3}{y^2}
\end{equation*}
and
\begin{equation*}
q = \frac{q_{00} + q_{10}x + q_{01}y}{x^2y^2},
\end{equation*}
clearing denominators in $qp = f - g$ and comparing coefficients, results in a system of linear equations for the undetermined coefficients whose solutions correspond to the rational functions
\begin{equation*}
f = \frac{u - u x - ux^3}{x^2} \quad \text{and} \quad g = \frac{u - u y - uy^3}{y^2}
\end{equation*}
and
\begin{equation*}
q = \frac{u x - u y}{x^2y^2},
\end{equation*}
for $u\in\mathbb{K}$. In particular, we find that
\begin{equation*}
\frac{x-y}{x^2y^2} p = \frac{1-x-x^3}{x^2} - \frac{1-y-y^3}{y^2}.
\end{equation*}
\end{Example}

We finish this section with another example.

\begin{Example}
The polynomial $p = x^2 + 3xy + y^2$ is not near-separable. Still, the semi-algorithm terminates on input $p$ as the only point determined by Algorithm~\ref{alg:1} is $(\infty,\infty)$. It is a singularity of the curve associated with (the bi-homogenization) of $p$.
\end{Example}

\subsection{Correctness}

In this subsection we prove the correctness of Algorithm~\ref{alg:2}. We show that it does not miss any pole of a generator of $\mathrm{F}(p)$ and that it does not fail in determining their multiplicities. Our arguments generalize the arguments in~\cite[Section~3]{buchacher2020separating}.

Let $f\in\mathbb{K}(x)$ and $g\in\mathbb{K}(y)$ be non-constant rational functions. We study the near-separated polynomial
\begin{equation*}
f_n g_d - g_nf_d
\end{equation*}
by introducing a new variable $t$ and investigating the auxiliary equations
\begin{equation*}
f = t \quad \text{and} \quad g = t.
\end{equation*}
We solve these equations with respect to $x$ and $y$ in $\overline{\mathbb{K}(t)}$, the algebraic closure of $\mathbb{K}(t)$. Let their solutions be $\alpha_0, \dots, \alpha_{m-1}$ and $\beta_0,\dots,\beta_{n-1}$, respectively, where $m = \max\{\deg f_n, \deg f_d\}$ and $n = \max\{\deg g_n, \deg g_d\}$. We will throughout view $\overline{\mathbb{K}(t)}$ as a subfield of $\mathbb{K}\{\{t^{-1}\}\}$, the field of Puiseux series in descending powers of $t$. The $\alpha_i$'s and $\beta_j$'s are therefore of the form
\begin{equation*}
c_1 t^{d_1} + c_2 t^{d_2} + \dots,
\end{equation*}
where $c_i\in\mathbb{K}$ and $d_1 > d_2 > \dots$ are rational numbers which have a common denominator. The construction underlying the Newton-Puiseux algorithm~\cite{walker1950algebraic} shows that their leading terms encode the poles of $f$ and $g$ as well as their multiplicities in the following sense.
\begin{Proposition}\label{prop:newt}
Let $f\in\mathbb{K}(x)$, and let $s\in\mathbb{K}\cup \{\infty\}$ be a pole of multiplicity $m$. 
If $s = \infty$, then for each root $c$ of $\mathrm{lc}(f_d) + \mathrm{lc}(f_n) t^m$ there is a root of $f-t$ in $\mathbb{K}\{\{t^{-1}\}\}$ whose leading term is $c t^{1/m}$.
If $s = 0$, then for each root $c$ of $\mathrm{lc}(f_n(x^{-1})) + \mathrm{lc}(f_d(x^{-1}))t^m$ there is a series root of $f-t$ whose leading term is $c t^{-1 / m}$. 
And if $s$ is neither $0$ nor $\infty$, then there are $m$ series roots whose leading term is $s$. We say that such series are associated with $s$.
\end{Proposition}

Every element $\pi$ of $\mathrm{Gal}(\overline{\mathbb{K}(t)}/\mathbb{K}(t))$, the Galois group of $\overline{\mathbb{K}(t)}$ over $\mathbb{K}(t)$, acts on $\mathbb{Z}_m\times \mathbb{Z}_n$ by
\begin{equation*}
\pi(i,j):= (i',j') \quad :\Longleftrightarrow \quad  (\pi(\alpha_i),\pi(\beta_j))=(\alpha_{i'},\beta_{j'}).
\end{equation*}
Let $G\subseteq \mathrm{S}_m\times \mathrm{S}_n$ be the group of permutations induced on $\mathbb{Z}_m\times\mathbb{Z}_n$ by this action. In the following we study subsets $T\subseteq\mathbb{Z}_m\times \mathbb{Z}_n$ that are invariant under the action of $G$ and investigate how they relate to factors of $f_n g_d - g_nf_d$.

For a subset $T\subseteq \mathbb{Z}_m\times \mathbb{Z}_n$, and $(i,j)\in\mathbb{Z}_m\times\mathbb{Z}_n$, we introduce
\begin{equation*}
T_{i,*} = \{k \mid (i,k)\in T\} \quad \text{and} \quad  T_{*,j} = \{k \mid (k,j)\in T\}.
\end{equation*}
As in~\cite{buchacher2020separating}, we have the following two lemmas.
\begin{Lemma}\label{lem:card}
Let $T\subseteq\mathbb{Z}_m\times \mathbb{Z}_n$ be invariant under the action of $G$. Then
\begin{equation*}
|T_{0,*}| = |T_{1,*}| = \dots = |T_{m-1,*}| \quad \text{and} \quad  |T_{*,0}| = |T_{*,1}| = \dots = |T_{*,n-1}|.
\end{equation*}
\end{Lemma}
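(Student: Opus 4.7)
The plan is to exploit transitivity of the Galois action on each factor separately. Concretely, the proof splits into two essentially identical halves, so I will describe the first equality; the second follows by symmetry.

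First I would show that $\mathrm{Gal}(\overline{\mathbb{K}(t)}/\mathbb{K}(t))$ acts transitively on $\{\alpha_0,\dots,\alpha_{m-1}\}$. The $\alpha_i$ are by definition the roots in $\overline{\mathbb{K}(t)}$ of the polynomial
\[
F(x) := f_n(x) - t f_d(x) \in \mathbb{K}(t)[x].
\]
Viewing $F$ instead as an element of $\mathbb{K}[x][t]$, it is linear in $t$ with coefficients $-f_d(x)$ and $f_n(x)$. Since $f$ is in reduced form, $f_n$ and $f_d$ are coprime in $\mathbb{K}[x]$, so $F$ is primitive over $\mathbb{K}[x]$ and trivially irreducible as a degree-one polynomial in $t$. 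Gauss's lemma then gives irreducibility of $F$ in $\mathbb{K}(t)[x]$, and hence the Galois group acts transitively on its roots. The same argument applies to the $\beta_j$.

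Next I would use this transitivity to build bijections between the slices. Fix $i, i' \in \mathbb{Z}_m$ and choose $\sigma\in\mathrm{Gal}(\overline{\mathbb{K}(t)}/\mathbb{K}(t))$ with $\sigma(\alpha_i)=\alpha_{i'}$. Let $\pi\in G$ be the permutation of $\mathbb{Z}_m\times\mathbb{Z}_n$ induced by $\sigma$. By construction, $\pi$ sends the ``column'' $\{i\}\times\mathbb{Z}_n$ to $\{i'\}\times\mathbb{Z}_n$, and since $T$ is $G$-invariant, $\pi$ restricts to a bijection
\[
T\cap (\{i\}\times \mathbb{Z}_n)\;\longrightarrow\; T\cap(\{i'\}\times \mathbb{Z}_n),
\]
i.e.\ to a bijection $T_{i,*}\to T_{i',*}$. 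In particular $|T_{i,*}|=|T_{i',*}|$, and since $i,i'$ were arbitrary, all the $|T_{i,*}|$ are equal. The equality $|T_{*,0}|=\cdots=|T_{*,n-1}|$ is proved in exactly the same way, using Galois-transitivity on the $\beta_j$.

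The only slightly delicate point is the irreducibility of $f_n - t f_d$; once that is in hand, the rest of the proof is a one-line transport-of-structure argument. I would make sure to flag explicitly that coprimality of $f_n,f_d$ is what makes the Gauss-lemma step go through, since this is the property of $f$ being in reduced form that actually drives the lemma.
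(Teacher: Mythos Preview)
Your proof is correct and follows essentially the same approach as the paper: both establish irreducibility of $f_n - t f_d$ over $\mathbb{K}(t)$ via Gauss's lemma (using coprimality of $f_n,f_d$ and linearity in $t$), deduce transitivity of the Galois action on the $\alpha_i$, and then transport $T_{i,*}$ to $T_{i',*}$ by an automorphism sending $\alpha_i$ to $\alpha_{i'}$. Your version is marginally cleaner in that you obtain a bijection directly from $\pi(T)=T$, whereas the paper shows one inequality and invokes symmetry for the other.
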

\begin{proof}
We only show that $|T_{0, \ast}| = |T_{1, \ast}|$. The other equalities are shown analogously.
Observe that $f_n - t f_d$ is irreducible over $\K(t)$.
  If it were not, it would be reducible over~$\K[t]$ due to Gauss's lemma.
This, however, is impossible, because $f_n - tf_d$ is linear in~$t$, and does not have any non-trivial factors in $\K[x]$ since $f_n$ and $f_d$ are relative prime.
  The irreducibility of $f_n - t f_d$ implies that its Galois group acts transitively on its roots.
  In particular, there exists $\pi\in\mathrm{Gal}(\overline{\mathbb{K}(t)}/\mathbb{K}(t))$ such that $\pi(\alpha_0) = \alpha_1$.
  Hence $\pi$ maps $T_{0, \ast}$ to $T_{1, \ast}$, and we have~$|T_{0, \ast}| \leqslant |T_{1, \ast}|$.
  The reverse inequality is proven analogously.

\end{proof}

\begin{Lemma}\label{lemma:factor}
The map
\begin{equation*}
p \quad \mapsto \quad T:= \{(i,j)\in\mathbb{Z}_m\times \mathbb{Z}_n : p(\alpha_i,\beta_j) = 0\}
\end{equation*}
defines a bijection between the set of factors of $f_ng_d - g_nf_d$ (up to multiplicative constants) and the set of subsets of $\mathbb{Z}_m\times\mathbb{Z}_n$ that are invariant under the action of $G$.
\end{Lemma}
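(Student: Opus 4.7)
The plan is to establish the bijection in three stages: well-definedness, an orbit-by-orbit construction of the inverse, and a squarefreeness-and-degree argument to show the correspondence is exact. For well-definedness, I would verify that $T_p$ is $G$-invariant for every divisor $p$ of $P := f_n g_d - g_n f_d$. For any $\sigma \in \mathrm{Gal}(\overline{\mathbb{K}(t)}/\mathbb{K}(t))$ and any $(i,j) \in T_p$, applying $\sigma$ to $p(\alpha_i, \beta_j) = 0$ yields $p(\sigma(\alpha_i), \sigma(\beta_j)) = 0$, because the coefficients of $p$ lie in $\mathbb{K} \subseteq \mathbb{K}(t)$ and are fixed pointwise by $\sigma$; the image of $(i,j)$ under the induced permutation therefore again lies in $T_p$.

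For the inverse, I would attach to each $G$-orbit $O \subseteq \mathbb{Z}_m \times \mathbb{Z}_n$ an irreducible $p_O \in \mathbb{K}[x, y]$. Pick any $(i_0, j_0) \in O$ and consider the evaluation $\mathrm{ev}_O : \mathbb{K}[x, y] \to \overline{\mathbb{K}(t)}$, $x \mapsto \alpha_{i_0}$, $y \mapsto \beta_{j_0}$. Its kernel $J_O$ contains $P$, hence is nonzero. Both $\alpha_{i_0}$ and $\beta_{j_0}$ are transcendental over $\mathbb{K}$ (since $f(\alpha_{i_0}) = t = g(\beta_{j_0})$ is) and both are algebraic over $\mathbb{K}(t)$, so $\mathrm{trdeg}_\mathbb{K} \mathbb{K}(\alpha_{i_0}, \beta_{j_0}) = 1$. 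This forces $J_O$ to be a height-$1$ prime in the UFD $\mathbb{K}[x, y]$, hence principal: $J_O = (p_O)$ with $p_O$ irreducible. The choice of representative $(i_0, j_0) \in O$ is immaterial, since two representatives are interchanged by an element of $G$ that fixes $\mathbb{K}$. For a $G$-invariant $T = O_1 \sqcup \cdots \sqcup O_r$, setting $p_T := \prod_\ell p_{O_\ell}$ yields a divisor of $P$ satisfying $T \subseteq T_{p_T}$.

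The main obstacle is to promote $T \subseteq T_{p_T}$ to equality, equivalently to show that $T_{p_O} = O$ for every orbit and that the $p_O$ are pairwise distinct. If $(i, j) \in T_{p_O}$, then $(p_O) \subseteq J_{G \cdot (i, j)}$, and since both sides are height-$1$ primes in the UFD $\mathbb{K}[x, y]$ they coincide, so $p_O$ equals $p_{G \cdot (i, j)}$ up to a nonzero scalar. To force $O = G \cdot (i, j)$, I would first establish that $P$ is squarefree: because $\mathbb{K}$ has characteristic zero, $f_n - t f_d$ and $g_n - t g_d$ are separable, the $\alpha_i$ and $\beta_j$ are pairwise distinct, and $P(x, y)$ splits into $m$ distinct linear factors in $\overline{\mathbb{K}(y)}[x]$. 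A bidegree count then shows $\prod_O p_O$ must exhaust $P$ up to a scalar, and unique factorization in $\mathbb{K}[x, y]$ supplies the pairwise distinctness. The bulk of the technical work lies in matching $\deg_x p_O = [\mathbb{K}(\alpha_{i_0}, \beta_{j_0}) : \mathbb{K}(\beta_{j_0})]$ (and its $y$-analogue) orbit-by-orbit against $\deg_x P = m$ and $\deg_y P = n$, which is where I expect the verification to require the most care.
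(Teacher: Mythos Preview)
Your proposal is correct and takes a genuinely different route from the paper. The paper constructs the inverse map explicitly: given a $G$-invariant $T$, it forms $\prod_{j\in T_{0,*}}(y-\beta_j)$, observes via Galois theory that this product lies in $\mathbb{K}(\alpha_0)[y]$, clears denominators to land in $\mathbb{K}[\alpha_0,y]$, and then substitutes $x$ for $\alpha_0$ (using that $\alpha_0$ is transcendental over~$\mathbb{K}$). Bijectivity is then verified directly using transitivity of the Galois action on the~$\alpha_i$, with uniqueness coming from a mutual-divisibility argument. Your approach instead builds the inverse from the orbit decomposition: each orbit $O$ determines a height-one prime $J_O=(p_O)$ in $\mathbb{K}[x,y]$ via a transcendence-degree count, and general invariant sets are handled by products. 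The paper's argument is more elementary and constructive---no ideal theory, and an explicit formula for the factor---while yours is more structural and makes the correspondence between irreducible factors and $G$-orbits transparent. For the final step you flag as delicate, the count does close cleanly: working in a finite Galois extension, orbit--stabilizer gives $|O|=[\mathbb{K}(\alpha_{i_0},\beta_{j_0}):\mathbb{K}(t)]=[\mathbb{K}(\alpha_{i_0},\beta_{j_0}):\mathbb{K}(\beta_{j_0})]\cdot n=n\deg_x p_O$, while independently $|T_{p_O}|=n\deg_x p_O$ because $p_O(x,\beta_j)$ has exactly $\deg_x p_O$ roots among the $\alpha_i$ for every~$j$; hence $O=T_{p_O}$, and the $p_O$ for distinct orbits are pairwise non-associate.
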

\begin{proof}
Let $p$ be a divisor of $f_ng_d - g_nf_d$, and let $T$ be the corresponding subset of $\mathbb{Z}_m\times\mathbb{Z}_n$. If $(i,j)\in T$, then $p(\alpha_i,\beta_j) = 0$, and so $p(\pi(\alpha_i),\pi(\beta_j)) = 0$ for any $\pi\in\mathrm{Gal}(\overline{\mathbb{K}(t)}/\mathbb{K}(t))$. Therefore, $\pi(i,j) \in T$, and $T$ is $G$-invariant.

Let $T$ be a $G$-invariant subset of $\mathbb{Z}_m\times\mathbb{Z}_n$, and let~$T_{0,*} = \{j_1,\dots,j_s\}$. Since $f(\alpha_0)=t$, we have $\mathbb{K}(\alpha_0)\supseteq \mathbb{K}(t)$, so $T$ is invariant with respect to the action of the Galois group $\mathrm{Gal}(\overline{\mathbb{K}(t)}/\mathbb{K}(\alpha_0))$. If $\alpha_0$ is fixed, then $\beta_{j_1},\dots,\beta_{j_s}$ are permuted. Therefore, $(y-\beta_{j_1})(y-\beta_{j_2})\dots(y-\beta_{j_s})$ is invariant under the action of $\mathrm{Gal}(\overline{\mathbb{K}(t)}/\mathbb{K}(\alpha_0))$. Hence, by the fundamental theorem of Galois theory, it is a polynomial in~$\mathbb{K}(\alpha_0)[y]$.
By construction, its numerator divides $f_n(\alpha_0)g_d(y) - g_n(y)f_d(\alpha_0)$ in $\mathbb{K}[\alpha_0,y]$. Replacing $\alpha_0$ by~$x$ therefore results in a polynomial $p\in\mathbb{K}[x,y]$ that divides~$f_ng_d - g_nf_d$ in $\mathbb{K}[x,y]$.

  It remains to show that the two constructions are inverse to each other. We first prove that the invariant set associated with the polynomial $p$ just constructed equals $T$. Let $(i, j) \in \mathbb{Z}_m \times \mathbb{Z}_n$.
  Since $\mathrm{Gal}(\overline{\mathbb{K}(t)}/\mathbb{K}(t))$ acts transitively on the roots of $f - t$, there is an automorphism $\pi$
  with $\pi (\alpha_i) = \alpha_0$.
  Let $\beta_{j'} = \pi(\beta_j)$.
  We then have
  \[
    p(\alpha_i, \beta_j) = 0 \iff p(\alpha_0, \beta_{j'}) = 0 \iff j' \in T_{0, \ast} \iff (i, j) \in T.
  \]
 The first of these equivalences follows from $\pi$ being an automorphism, the second from the construction of $p$, and the third from the invariance of $T$.
We now show that $p$ is the unique factor of $f_ng_d - g_nf_d$ whose associated invariant set is $T$. Assume that $\tilde{p}$ is another divisor such that $\tilde{p}(\alpha_i,\beta_j) = 0$ if and only if $(i,j)\in T$.
  The same argument which proved that the polynomial constructed from $T$ is a divisor of $f_ng_d - g_nf_d$ applies to show that $p$ is a divisor of $\tilde{p}$ in $\K [x,y]$, and vice versa. Hence they only differ by a multiplicative constant.
\end{proof}

\begin{Example}\label{ex:invSet}
There are four invariant subsets of $\mathbb{Z}_4\times \mathbb{Z}_4$ that can be associated with
\begin{equation*}
f_ng_d - g_nf_d = (1-x)^2(1+x+x^2)y(1+y)^2 + (1+y+y^2)^2x^2.
\end{equation*}
The diagram on the left below illustrates the invariant set $T$ that corresponds to $p = xy -1-y-x^2y-x^2y^2$. Its rows are numbered by the roots of $f - t$, and the roots of $g - t$ number its columns. A dot in the $i$-th row and $j$-th column indicates that $p$ annihilates $(\alpha_i,\beta_j)$. The other invariant sets are $\emptyset$, $T^c$ and $\mathbb{Z}_m\times\mathbb{Z}_n$. The first and last correspond to the trivial factors $1$ and $f_ng_d - g_nf_d$, the second one is associated with the complementary factor of $p$.
\begin{center}
\begin{tikzpicture}[scale=0.5]
  \draw[xshift=-.5cm,yshift=-.5cm] (0,0) grid (4,4);
  \foreach\x/\y in {1/0, 2/0, 0/1, 2/1, 1/2, 3/2, 0/3, 3/3} \draw (\x,\y) node {$\bullet$}; 
    \draw (1.5,-.75) node[below] {\vbox{\clap{$\mathstrut xy -1-y-x^2y-x^2y^2$}\kern0pt}};
\end{tikzpicture}
\hfil
\begin{tikzpicture}[scale=0.5]
 \fill[yellow!50!gray] (-0.5,-0.5) rectangle (1.5,1.5);
  \fill[purple!50!gray] (-0.5,1.5) rectangle (1.5,3.5);
  \fill[blue!50!gray] (1.5,-0.5) rectangle (2.5,1.5);
  \fill[black!30!white] (1.5,1.5) rectangle (2.5,3.5);
  \fill[black!30!white] (2.5,-0.5) rectangle (3.5,1.5);
  \fill[green!30!gray] (2.5,1.5) rectangle (3.5,3.5);
  \draw[xshift=-.5cm,yshift=-.5cm] (0,0) grid (4,4);
  \draw (-1,3) node {\tiny$0$} (0,4) node {\tiny$-1$};
  \draw (-1,2) node {\tiny$0$} (1,4) node {\tiny$-1$};
  \draw (-1,1) node {\tiny$\infty$} (2,4) node {\tiny$0$};
  \draw (-1,0) node {\tiny$\infty$} (3,4) node {\tiny$\infty$};
  \foreach\x/\y in {0/1, 0/3} \draw (\x,\y) node {$\bullet$};
  \foreach\x/\y in {1/0, 1/2} \draw (\x,\y) node {$\bullet$};
  \foreach\x/\y in {2/0, 2/1} \draw (\x,\y) node {$\bullet$};
  \foreach\x/\y in {3/2, 3/3} \draw (\x,\y) node {$\bullet$};
   \draw (1.5,-.75) node[below] {\vbox{\clap{$\mathstrut xy -1-y-x^2y-x^2y^2$}\kern0pt}};
\end{tikzpicture}

\end{center}
\end{Example}
\begin{Definition}
Let $p$ be a factor of $f_ng_d - g_nf_d$, and let $T$ be the corresponding invariant set. Let $s_1$ and $s_2$ be poles of $f$ and $g$, respectively, and let $T^{s_1,s_2}\subseteq T$ be such that $(i,j)\in T^{s_1,s_2}$ if and only if $(\alpha_i, \beta_j)$ is associated with $(s_1,s_2)$ in the sense of Proposition~\ref{prop:newt}. We refer to $T^{s_1,s_2}$ as the \textbf{component} of $T$ associated with $(s_1,s_2)$.
\end{Definition}

\begin{Example}\label{ex:comp}
Continuing with Example~\ref{ex:invSet}, the invariant set associated with $xy-1-y-x^2y-x^2y^2$ and $f_ng_d - g_nf_d$ has four non-empty components. The figure on the right above depicts its diagram again. Its rows and columns are not only numbered by the series roots of $f-t$ and $g-t$, respectively, but also labeled by the poles they encode. Its non-empty components are highlighted in color.
\end{Example}

We will see in Lemma~\ref{lemma:component} that the non-empty components of an invariant set associated with a factor of $f_ng_d - g_nf_d$ have an interpretation on the level of their leading parts. Before we present the lemma, we give another definition and another (simple) lemma that will turn out to be useful.
\begin{Definition}
Let $(s_1,s_2)$ be a pair of poles of $(f,g) \in\mathbb{K}(x)\times\mathbb{K}(y)$, and let $(\alpha,\beta)$ be a pair of roots of $f-t$ and $g-t$ in $\mathbb{K}\{\{t^{-1}\}\}$ associated with it. We say that $\omega\in\mathbb{Z}^2$ is associated with $(s_1,s_2)$ and $(f,g)$, if it is a positive multiple of $(\deg \alpha,\deg \beta)$.
\end{Definition}

\begin{Lemma}\label{lemma:set}
Let $S$ and $T$ be two sets, which are disjoint unions of sets $S_1$, $S_2$ and $T_1$, $T_2$, respectively, and let $\varphi: S \rightarrow T$ be a bijective map. If $\varphi$ restricts to injective maps between $S_1$, $T_1$ and $S_2$, $T_2$, respectively, then 
these restrictions are bijections.
\end{Lemma}

\begin{Lemma}\label{lemma:component}
Let $T$ be the invariant set of $p$ and $f_ng_d - g_nf_d$. Furthermore, let $(s_1,s_2)\in\{0,\infty\}^2$ be a pair of poles of $(f,g)$, and let $\omega\in\mathbb{Z}^2$ be associated with it. Then $T^{s_1,s_2}\neq \emptyset$ if and only if $\mathrm{lp}_\omega(p)$ is not a single term. If $T^{s1_,s_2} \neq \emptyset$, then the invariant set of $\mathrm{lp}_\omega(p)$ and $\mathrm{lp}_\omega(f) - \mathrm{lp}_\omega(g)$ can be identified with $T^{s_1,s_2}$.
\end{Lemma}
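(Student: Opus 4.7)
I will establish the identification of $T^{s_1,s_2}$ with the invariant set $U_p$ of $\mathrm{lp}_\omega(p)$ as a factor of the numerator $F$ of $\mathrm{lp}_\omega(f) - \mathrm{lp}_\omega(g)$; the biconditional will then follow. The first task is to describe $F$ explicitly. By multiplicativity of $\mathrm{lp}_\omega$, $\mathrm{lp}_\omega(p)$ divides $\mathrm{lp}_\omega(f_n g_d - g_n f_d)$. The monomials $\mathrm{lp}_\omega(f_n g_d)$ and $\mathrm{lp}_\omega(g_n f_d)$ have equal $\omega$-weight (a direct consequence of $f(\alpha)=g(\beta)=t$, which forces $\omega(f_n)-\omega(f_d)=\omega(g_n)-\omega(g_d)=1$) but different bidegrees, since $(s_1,s_2)$ is a genuine pair of poles, so they do not cancel in the difference and $\mathrm{lp}_\omega(f_ng_d - g_nf_d) = F$. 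Because $\mathrm{lp}_\omega(f)$ and $\mathrm{lp}_\omega(g)$ are Laurent monomials in one variable, $F$ has, up to inverting $x,y$ and multiplying by a monomial (according to the sign vector of $\omega$), the shape $c_f x^{\mathrm{m}_1} - c_g y^{\mathrm{m}_2}$, with $\mathrm{m}_i := \mathrm{m}(s_i)$. This factors as $c_f\prod_{j=0}^{d-1}\bigl(x^{\mathrm{m}_1/d} - \zeta_j c' y^{\mathrm{m}_2/d}\bigr)$ with $d=\gcd(\mathrm{m}_1,\mathrm{m}_2)$, so $F$ is squarefree and has no monomial factors.

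\emph{Bijection and equality $T^{s_1,s_2} = U_p$.} By Proposition~\ref{prop:newt}, the leading Puiseux terms $\tilde\alpha_i$ of the $\alpha_i$ associated with $s_1$ are exactly the $\mathrm{m}_1$ distinct roots of $\mathrm{lp}_\omega(f) - t$, and similarly for the $\tilde\beta_j$. Hence the pairs $(i,j)$ associated with $(s_1,s_2)$ biject with the pairs of roots of $\mathrm{lp}_\omega(f)-t$ and $\mathrm{lp}_\omega(g)-t$, which by Lemma~\ref{lemma:factor} index $U_p$. For the forward inclusion $T^{s_1,s_2}\subseteq U_p$: every monomial $x^ay^b$ of $\mathrm{lp}_\omega(p)$ contributes at the maximal $\omega$-weight, which equals the Puiseux degree of $\alpha_i^a\beta_j^b$, so the leading Puiseux term of $p(\alpha_i,\beta_j)$ equals $\mathrm{lp}_\omega(p)(\tilde\alpha_i,\tilde\beta_j)$, and $p(\alpha_i,\beta_j)=0$ forces $(\tilde\alpha_i,\tilde\beta_j)\in U_p$. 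Applying the same argument to the cofactor $q := (f_n g_d - g_n f_d)/p$ yields $(T^c)^{s_1,s_2}\subseteq U_q$. Now $T^{s_1,s_2}\sqcup (T^c)^{s_1,s_2}$ exhausts the pairs associated with $(s_1,s_2)$ and $U_p\cup U_q$ is the invariant set of $F$; moreover $U_p\cap U_q = \emptyset$, because $\mathrm{lp}_\omega(p),\mathrm{lp}_\omega(q)$ are coprime (as $F$ is squarefree) and two distinct irreducible factors of $F$ meet only at $(0,0)$, which is excluded since $\tilde\alpha_i,\tilde\beta_j\neq 0$. A cardinality comparison then forces $T^{s_1,s_2} = U_p$.

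\emph{Biconditional and main obstacle.} A single-term $\mathrm{lp}_\omega(p)$ dividing the monomial-free polynomial $F$ must be a nonzero constant, for which $U_p = \emptyset$; conversely, if $\mathrm{lp}_\omega(p)$ has more than one term, it contains an irreducible factor of $F$, whose roots $(\gamma,\delta)$ are nonzero in each coordinate, so $U_p \neq \emptyset$. Via the identification just established this is equivalent to $T^{s_1,s_2}\neq\emptyset$. The main delicacy lies in the structural Step~1: verifying the near-separated, squarefree form of $F$ uniformly across the four sign cases of $\omega$, where the cases with $(s_1,s_2)\neq(\infty,\infty)$ are reduced to the standard one by the substitutions $x\leftrightarrow 1/x$, $y\leftrightarrow 1/y$ and clearing denominators. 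After that, the counting argument for the reverse inclusion is the only remaining non-routine step.
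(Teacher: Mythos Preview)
Your proof is correct and follows the same overall scheme as the paper: both rely on the leading-term substitution $(\alpha_i,\beta_j)\mapsto(\tilde\alpha_i,\tilde\beta_j)$ and on the squarefree structure of (the non-monomial part of) $\mathrm{lp}_\omega(f_ng_d-g_nf_d)$. The difference lies in the reverse inclusion. The paper fixes one $\beta$, runs $\alpha$ over the $\mathrm{m}(s_1)$ candidates, and uses the distinctness of their leading terms to locate a pair where $\mathrm{lp}_\omega(p)$ vanishes but its complementary factor does not, concluding $p(\alpha,\beta)=0$. You instead apply the forward inclusion symmetrically to the cofactor $q$, obtain $(T^c)^{s_1,s_2}\subseteq U_q$, and finish by a cardinality count using $U_p\cap U_q=\emptyset$. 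Your route is a bit cleaner and yields the full identification $T^{s_1,s_2}=U_p$ in one stroke, whereas the paper asserts this bijection but only spells out the forward map explicitly.

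One imprecision to fix: the equality $\mathrm{lp}_\omega(f_ng_d-g_nf_d)=F$ is off by the monomial $\mathrm{lp}_\omega(f_d)\mathrm{lp}_\omega(g_d)$ (and analogously in the other sign cases), so $\mathrm{lp}_\omega(p)$ may itself carry a monomial factor and need not divide $F$. This does not damage your counting argument, since monomials never vanish at $(\tilde\alpha_i,\tilde\beta_j)$ and hence contribute nothing to $U_p$ or $U_q$; but the clause ``a single-term $\mathrm{lp}_\omega(p)$ \dots\ must be a nonzero constant'' should be replaced by the direct observation that a monomial has no zeros with nonzero coordinates. Similarly, the claim $\omega(f_n)-\omega(f_d)=1$ depends on the normalization of $\omega$; only the equality $\omega(f_n)-\omega(f_d)=\omega(g_n)-\omega(g_d)$ is needed, and that holds.
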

\begin{proof}
For notational convenience, given a series $\alpha\in\mathbb{K}\{\{t^{-1}\}\}$, we will denote its leading term by $\overline{\alpha}$.

Assume that $T^{s_1,s_2}\neq \emptyset$, and let $(i,j)\in T^{s_1,s_2}$. Then $p(\alpha_i,\beta_j) = 0$, and hence  
$\mathrm{lp}_\omega(p)(\overline{\alpha}_i,\overline{\beta}_j) = 0$ by extraction of terms of maximal degree. Since $\overline{\alpha}_i$, $\overline{\beta}_j\neq 0$, it follows that $\mathrm{lp}_\omega(p)$ involves at least two terms, and since it is a divisor of $\mathrm{lp}_\omega(f_ng_d - g_nf_d)$, so does the latter. Again, because of $\deg \alpha_i$, $\deg \beta_j \neq 0$, it follows that $\mathrm{lp}_\omega(f_ng_d - g_nf_d) = \mathrm{lp}_\omega(f_n)\mathrm{lp}_\omega(g_d) - \mathrm{lp}_\omega(g_n)\mathrm{lp}_\omega(f_d)$. By the construction of invariant sets, and by Proposition~\ref{prop:newt}, taking leading terms of series induces a map from $T^{s_1,s_2}$ to the invariant set of $\mathrm{lp}_\omega(p)$ and $\mathrm{lp}_\omega(f) - \mathrm{lp}_\omega(g)$. It is clearly injective since the series solutions of $f - t$ and $g-t$ associated with $s_1$ and $s_2$, respectively, can be distinguished by their leading terms. To see that it is also surjective, note that these observations do not only hold for $p$, but for any factor of $f_ng_d - g_nf_d$. In particular, it holds for the complementary factor of $p$ in $f_ng_d - g_nf_d$ and for $f_ng_d - g_nf_d$ itself. The set of pairs of series roots of $f-t$ and $g-t$ associated with $(s_1,s_2)$ and the set of pairs of series roots of $\mathrm{lp}_\omega(f) - t$ and $\mathrm{lp}_\omega(g) - t$ have equal size. Their cardinality is the product of $\mathrm{m}(s_1,f)$ and $\mathrm{m}(s_2,g)$. The map induced by taking leading terms is therefore not only injective. It is a surjection between these sets. The former set can be identified with the union of $T^{s_1,s_2}$ and the corresponding component of the invariant set associated with the complementary factor of $p$. The latter can be identified with the union of the invariant sets associated with their leading parts. It follows from Lemma ~\ref{lemma:set} that the restriction to $T^{s_1,s_2}$ is surjective too. The same argument proves the if-part of the statement.
\end{proof}

\begin{Example}\label{ex:comp1}
In Example~\ref{ex:comp} we observed that the invariant set $T$ associated with $p$ and $f_ng_d - g_nf_d$ partitions into four non-empty components. Two of them can be related with the leading parts of $p$ and $f_ng_d - g_nf_d$ with respect to $\omega_1 = (1,-2)$ and $\omega_2 = (-1,2)$. The other two with the leading parts of $p(x,-1+y)$ and $f_n(x)g_d(-1+y) - g_n(-1+y)f_d(x)$ with respect to $\omega_3 = (1,-1)$ and $\omega_4 = (-1,-1)$. The diagrams and the pairs of polynomials they are corresponding to are depicted below.
\begin{center}
\begin{tikzpicture}[scale=0.5]
  \fill[blue!50!gray] (-0.5,-0.5) rectangle (0.5,1.5);
  \draw[xshift=-.5cm,yshift=-.5cm] (0,0) grid (1,2);
  \draw (0,2) node {\tiny$0$};
  \draw (-1,1) node {\tiny$\infty$};
  \draw (-1,0) node {\tiny$\infty$};
  \foreach\x/\y in {0/0, 0/1} \draw (\x,\y) node {$\bullet$};
 \footnotesize
 \draw (0,-.75) node[below] {\vbox{\clap{$\mathstrut x^2+y^{-1} \mid x^2 + y^{-1}$}\kern0pt}};
\end{tikzpicture}
\qquad
\qquad
\hfil
\begin{tikzpicture}[scale=0.5]
 \fill[yellow!50!gray] (-0.5,-0.5) rectangle (1.5,1.5);
  \draw[xshift=-.5cm,yshift=-.5cm] (0,0) grid (2,2);
    \foreach\x/\y in {0/2, 1/2} \draw (\x,\y) node {\tiny$-1$};
  \draw (-1,1) node {\tiny$\infty$};
  \draw (-1,0) node {\tiny$\infty$};
  \foreach\x/\y in {1/0, 0/1} \draw (\x,\y) node {$\bullet$};
   \footnotesize
 \draw (0.65,-.75) node[below] {\vbox{\clap{$\mathstrut x-y^{-1} \mid x^2-y^{-2}$}\kern0pt}};
\end{tikzpicture}
\qquad
\quad
\hfil
\begin{tikzpicture}[scale=0.5]
  \fill[purple!50!gray]  (-0.5,-0.5) rectangle (1.5,1.5);
  \draw[xshift=-.5cm,yshift=-.5cm] (0,0) grid (2,2);
    \foreach\x/\y in {0/2, 1/2} \draw (\x,\y) node {\tiny$-1$};
  \draw (-1,1) node {\tiny$0$};
  \draw (-1,0) node {\tiny$0$};
  \foreach\x/\y in {1/0, 0/1} \draw (\x,\y) node {$\bullet$};
   \footnotesize
 \draw (0.5,-.75) node[below] {\vbox{\clap{$\mathstrut x^{-1}+y^{-1} \mid x^{-2}-y^{-2}$}\kern0pt}};
\end{tikzpicture}
\qquad
\qquad
\hfil
\begin{tikzpicture}[scale=0.5]
  \fill[green!30!gray] (-0.5,-0.5) rectangle (0.5,1.5);
  \draw[xshift=-.5cm,yshift=-.5cm] (0,0) grid (1,2);
  \draw (0,2) node {\tiny$\infty$};
  \draw (-1,1) node {\tiny$0$};
  \draw (-1,0) node {\tiny$0$};
  \foreach\x/\y in {0/0, 0/1} \draw (\x,\y) node {$\bullet$};
 \footnotesize
 \draw (0,-.75) node[below] {\vbox{\clap{$\mathstrut x^{-2} + y \mid x^{-2} + y$}\kern0pt}};
\end{tikzpicture}
\end{center}
\end{Example}

The diagrams of the components above have the same heights and lengths, respectively, when their vertical and horizontal sides are labeled by the same poles. This is not a coincidence.
\begin{Lemma}\label{lem:size}
Let $T$ be the invariant subset associated with $p$ and $f_ng_d - g_nf_d$, and let $s$, $s_1$, $s_2\in\mathbb{K}\cup\{\infty\}$ be such that $T^{s,s_1}$, $T^{s,s_2}\neq \emptyset$. Then
\begin{equation*}
\bigcup_i T^{s,s_1}_{*,i} = \bigcup_j T^{s,s_2}_{*,j}.
\end{equation*}
\end{Lemma}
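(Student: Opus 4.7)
The plan is to prove the stronger statement that both unions are equal to
\[
I_s := \{\,i \in \mathbb{Z}_m : \alpha_i \text{ is associated with } s\,\},
\]
a set that depends only on $s$ and not on the second components $s_1, s_2$. The inclusion $\bigcup_i T^{s,s_1}_{*,i} \subseteq I_s$ is immediate from the definition of a component, so the content of the argument is the reverse: every row index associated with $s$ must already show up in any nonempty $T^{s,s_1}$.

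To establish this, I would first reduce to the case $(s,s_1) \in \{0,\infty\}^2$ by translating any finite pole of the pair to zero, exactly as in Subsection~\ref{sub:mult}. Next, choose a weight function $\omega \in \mathbb{Z}^2$ associated with $(s,s_1)$. The hypothesis $T^{s,s_1} \ne \emptyset$ lets me invoke Lemma~\ref{lemma:component}: $\mathrm{lp}_\omega(p)$ is not a single term, and the invariant set $T'$ associated with $\mathrm{lp}_\omega(p)$ and $\mathrm{lp}_\omega(f) - \mathrm{lp}_\omega(g)$ is identified, via the leading-term map on Puiseux series, with $T^{s,s_1}$. Under this identification the rows of $T'$ correspond bijectively to $I_s$, since the $\mathrm{m}(s)$ Puiseux roots of $\mathrm{lp}_\omega(f)-t$ are exactly the leading terms of the $\mathrm{m}(s)$ Puiseux roots of $f-t$ associated with $s$ (and they are distinct because they are obtained from a fixed value by multiplication with the $\mathrm{m}(s)$-th roots of unity, as in Proposition~\ref{prop:newt}).

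Now Lemma~\ref{lem:card}, applied to $T'$, yields that all of its rows have the same cardinality. Since $T'$ is nonempty, no row can be empty, and therefore every index in $I_s$ appears as a first coordinate of an element of $T^{s,s_1}$. This proves $\bigcup_i T^{s,s_1}_{*,i} = I_s$. Running the identical argument with a weight function associated with $(s,s_2)$ gives $\bigcup_j T^{s,s_2}_{*,j} = I_s$, and so the two unions agree.

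The most delicate step is confirming that the hypotheses of Lemma~\ref{lem:card} apply to $T'$: this requires the irreducibility of $\mathrm{lp}_\omega(f)_n - t\,\mathrm{lp}_\omega(f)_d$ over $\mathbb{K}(t)$. Since $\mathrm{lp}_\omega(f)$ is a monomial, this boils down to the irreducibility of a binomial $c\,x^{\mathrm{m}(s)} - t$ in $\mathbb{K}(t)[x]$, which follows from the Vahlen--Capelli criterion together with the fact that $t$ is not a proper power in $\mathbb{K}(t)$ (and symmetrically for $g$). Getting the bookkeeping right between Puiseux roots, pole-associations and the identification of Lemma~\ref{lemma:component} is the part of the argument that needs the most care.
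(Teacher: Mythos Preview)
Your argument is correct and follows essentially the same route as the paper: reduce to poles in $\{0,\infty\}$, invoke Lemma~\ref{lemma:component} to identify $T^{s,s_k}$ with the invariant set of $\mathrm{lp}_{\omega_k}(p)$ inside $\mathrm{lp}_{\omega_k}(f)-\mathrm{lp}_{\omega_k}(g)$, observe that $\mathrm{lp}_{\omega_k}(f)$ depends only on $s$, and apply Lemma~\ref{lem:card} to conclude that every row index in $I_s$ is hit. Your framing via the auxiliary set $I_s$ and your explicit verification that Lemma~\ref{lem:card} applies (irreducibility of the binomial $c\,x^{\mathrm{m}(s)}-t$) are details the paper leaves implicit, but the structure of the argument is the same.
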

\begin{proof}
Wlog we assume that $s, s_1, s_2 \in\{0,\infty\}$. Let $(i_k,j_k)\in T^{s,s_k}$, and define $\omega_k = (\deg \alpha_{i_k}, \deg \beta_{j_k})$, $k \in \{1,2\}$. By Lemma~\ref{lemma:component}, $T^{s,s_k}$ is the invariant set associated with $\mathrm{lp}_{\omega_k}(p)$ and $\mathrm{lp}_{\omega_k}(f) - \mathrm{lp}_{\omega_k}(g)$. Since $\deg \alpha_{i_k}$ is independent of $k$, so is $\mathrm{lp}_{\omega_k}(f)$. The statement now follows from the construction of invariant sets and Lemma~\ref{lem:card}.
\end{proof}

Lemma~\ref{lemma:factor} showed that there is a bijection between factors of $f_ng_d - g_nf_d$ and $G$-invariant subsets of $\mathbb{Z}_m\times\mathbb{Z}_n$. We next give a characterization of near-separated factors of $f_ng_d - g_nf_d$ in terms of properties of the invariant subsets associated with them.

\begin{Definition}
A subset $T\subseteq \mathbb{Z}_m\times\mathbb{Z}_n$ is called \textbf{separated} if
\begin{equation*}
\forall\ i,j\in\mathbb{Z}_m : (T_{i,*}\cap T_{j,*} = \emptyset) \text{ or } (T_{i,*} = T_{j,*}).
\end{equation*}
\end{Definition}

\begin{Lemma}\label{lem:combinatorial_separatedness}
 Let $p$ be a factor of $f_ng_d - g_nf_d$, and let $T\subseteq \mathbb{Z}_m\times \mathbb{Z}_n$ be the corresponding invariant set. Then $p$ is near-separated if and only if $T$ is separated.
\end{Lemma}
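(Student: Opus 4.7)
The plan is to prove the two implications separately. The forward direction follows quickly from Lemma~\ref{lem:nearSep} applied to the given factorization, while the reverse direction requires constructing the relevant $F\in\mathbb{K}(x)$ and $G\in\mathbb{K}(y)$ via a Galois-theoretic argument and then identifying $F_nG_d - G_nF_d$ with $p$ through Lemma~\ref{lemma:factor}.

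For the forward direction, write $p = F_nG_d - G_nF_d$ with $F\in\mathbb{K}(x)$, $G\in\mathbb{K}(y)$. Since $p$ divides $f_ng_d - g_nf_d$, Lemma~\ref{lem:nearSep} yields $h\in\mathbb{K}(t)$ with $h(F)=f$ and $h(G)=g$. Factoring $p(\alpha_i,\beta_j) = F_d(\alpha_i)G_d(\beta_j)(F(\alpha_i)-G(\beta_j))$, one sees that $(i,j)\in T$ iff $F(\alpha_i)=G(\beta_j)$. Applying $h$ forces both sides to lie in the finite set $\{s_1,\ldots,s_r\}$ of roots of $h(u)-t$ in $u$, so $T = \bigsqcup_{l=1}^r A_l \times B_l$ with $A_l = \{i : F(\alpha_i)=s_l\}$ and $B_l = \{j : G(\beta_j)=s_l\}$. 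Each $T_{i,*}$ is therefore a single $B_l$, and two such sets either coincide or are disjoint, which is the separatedness of $T$.

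For the reverse direction, the row-equivalence $i\sim i'$ iff $T_{i,*}=T_{i',*}$ and the analogous column-equivalence give partitions $\mathbb{Z}_m = \bigsqcup_k C_k$ and $\mathbb{Z}_n = \bigsqcup_k D_k$ standing in a canonical bijection $C_k\leftrightarrow D_k$, with $T = \bigsqcup_k C_k\times D_k$; because $T$ is $G$-invariant, both partitions and the matching are $G$-equivariant, and (using $G$-transitivity on $\{\alpha_i\}$ and $\{\beta_j\}$) the degenerate case of an empty class reduces to $T=\emptyset$, where $p$ is a nonzero constant and trivially near-separated. Otherwise, set $H_\alpha = \mathrm{Stab}_G(\alpha_0)$ and, after reindexing the $\beta_j$ so that $\beta_0\in D_0$, the block matched to the class $C_0\ni 0$, set $H_\beta = \mathrm{Stab}_G(\beta_0)$; let $H'$ be the setwise stabilizer of $C_0$ (equivalently of $D_0$). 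The $G$-equivariance of the matching forces $H_\alpha,H_\beta\subseteq H'$, so by the Galois correspondence $L := \mathrm{Fix}(H')$ is a common intermediate field between $\mathbb{K}(t)$ and both $\mathbb{K}(\alpha_0)\cong\mathbb{K}(x)$ and $\mathbb{K}(\beta_0)\cong\mathbb{K}(y)$. A L\"uroth generator $\xi$ of $L$ over $\mathbb{K}(t)$ produces $F\in\mathbb{K}(x)$, $G\in\mathbb{K}(y)$ and a single $h\in\mathbb{K}(t)$ with $h(F)=f$ and $h(G)=g$; then Lemma~\ref{lem:nearSep} shows $F_nG_d - G_nF_d \mid f_ng_d - g_nf_d$, and a direct check via the coset description $F(\alpha_i)=G(\beta_j) \Leftrightarrow \pi H' = \pi' H'$ (where $\pi(\alpha_0)=\alpha_i$ and $\pi'(\beta_0)=\beta_j$) shows the invariant set of this divisor is exactly $T$, so by Lemma~\ref{lemma:factor} it equals $p$ up to a multiplicative constant.

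The main obstacle lies in the reverse direction: making both Galois correspondences, on the $\alpha$- and the $\beta$-side, speak to the same intermediate subgroup $H'$, so that a single L\"uroth generator $\xi$ simultaneously produces $F$ and $G$. This is precisely where separatedness of $T$ is essentially used, via the combinatorial fact that the row-class and column-class partitions are $G$-equivariantly matched; without it, one could not glue the two Galois pictures into a single $L\supseteq \mathbb{K}(t)$ sitting inside both $\mathbb{K}(x)$ and $\mathbb{K}(y)$.
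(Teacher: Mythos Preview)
Your argument is correct, but it diverges from the paper's in both directions, and the contrast is instructive.

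For the forward direction you take a detour through Lemma~\ref{lem:nearSep} to obtain $h$ and then partition $T$ via the fibres of $h$. This works, but it is more than needed: once you have $(i,j)\in T\iff F(\alpha_i)=G(\beta_j)$ (which, as you note, holds because $F_d(\alpha_i)$ and $G_d(\beta_j)$ are nonzero, the $\alpha_i,\beta_j$ being transcendental over~$\mathbb{K}$), separatedness is immediate: $(i,j),(i,j'),(i',j)\in T$ forces $F(\alpha_{i'})=G(\beta_j)=F(\alpha_i)=G(\beta_{j'})$, hence $(i',j')\in T$. That is precisely the paper's one-line argument, and it avoids invoking the composition lemma.

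For the converse your route is genuinely different. The paper argues directly on the coefficients: writing $p=\sum_i a_i(x)y^i$, it uses that $a_i(\alpha_j)/a_s(\alpha_j)$ is a symmetric function of $\{\beta_k:k\in T_{j,*}\}$, and a root-count shows every $a_i$ lies in the $\mathbb{K}$-span of two fixed polynomials $a_{i_0}$ and $a_s$, whence $p$ is visibly near-separated. Your approach is structural: separatedness gives a $G$-equivariant block decomposition $T=\bigsqcup C_k\times D_k$, the common setwise stabiliser $H'$ of $C_0$ and $D_0$ dominates both point stabilisers, so its fixed field $L$ sits inside $\mathbb{K}(\alpha_0)$ and $\mathbb{K}(\beta_0)$ simultaneously, and a L\"uroth generator of $L$ produces $F$ and $G$ at once. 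The coset computation identifying the invariant set of $F_nG_d-G_nF_d$ with $T$ is clean (the two bijections $\pi H'\mapsto\pi(C_0)$ and $\pi H'\mapsto\pi(D_0)$ are compatible exactly because the matching $C_k\leftrightarrow D_k$ is $G$-equivariant), and Lemma~\ref{lemma:factor} closes the loop. Your proof explains \emph{why} near-separated factors correspond to intermediate fields common to $\mathbb{K}(x)$ and $\mathbb{K}(y)$ over $\mathbb{K}(t)$; the paper's proof is more elementary and yields the near-separated form explicitly without any Galois bookkeeping.
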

\begin{proof}
If $p = \tilde{f}_n\tilde{g}_d - \tilde{g}_n\tilde{f}_d$, then $(i,j)\in T$ if and only if $\tilde{f}(\alpha_i) = \tilde{g}(\beta_j)$. Hence, if $(i,j)$, $(i,j')$, $(i',j)\in T$ then $(i',j')\in T$. This shows the only-if part of the statement. 

Let us now assume that $T$ is separated, and let us show that the polynomial
\begin{equation*}
p(x,y) = a_s(x)y^s + a_{s-1}(x)y^{s-1} + \dots + a_0(x),
\end{equation*}
that corresponds to it is near-separated. By construction $a_i(\alpha_j)/a_s(\alpha_j)$ is, up to sign, the $(s-i)$-th elementary symmetric polynomial in $\{\beta_k : k\in T_{j,*}\}$ for each $0\leq  i < s$ and $0\leq j <m$. By assumption, $T$ is separated. If $k\in T_{j,*} \cap T_{j',*}$, then $T_{j,*} = T_{j',*}$, and therefore $a_i(\alpha_j)/a_s(\alpha_j) = a_i(\alpha_{j'})/a_s(\alpha_{j'})$ by construction. We will crucially make use of this observation in a moment.

Let us assume for the moment that $\val a_s > 0$. If $i_0$ is such that $\val a_{i_0}=0$, then there are $c_i\in\mathbb{K}$ such that $\val(a_i - c_i a_{i_0}) > 0$. The number of non-zero roots of
\begin{equation*}
\frac{a_i(x) - c_i a_{i_0}(x)}{a_s(x)} - \frac{a_i(\alpha_0) - c_i a_{i_0}(\alpha_0)}{a_s(\alpha_0)}
\end{equation*}
is at most
\begin{equation*}
\max\{ \deg (a_i - c_i a_{i_0}), \deg a_s\} - \min\{ \val (a_i - c_i a_{i_0}), \val a_s\},
\end{equation*}
and therefore smaller than $\deg_x p$. If $j\in T_{0,*}$, then for each $k\in T_{*,j}$ the series $\alpha_k$ is a root. Since these roots are non-zero and pairwise distinct, and because there are $\deg_x p$ of them (see the proof of Lemma~\ref{lemma:factor}), the rational function is identically zero. Hence there are $d_i\in\mathbb{K}$ such that
\begin{equation*}
a_i(x) = c_i a_{i_0}(x) + d_i a_s(x).
\end{equation*}
Consequently,
\begin{equation*}
p(x,y) = \sum_{i=0}^s a_i(x) y^i = a_{i_0}(x) \sum_{i=0}^s c_i y^i + a_s(x) \sum_{i=0}^s d_i y^i,
\end{equation*}
that is, $p$ is near-separated. If $\val a_s = 0$ and $a_s$ is not just a single term, then it has a root $c\in\mathbb{K}$. The leading coefficient of $p(x+c,y)$ with respect to $y$ has positive valuation, and we can argue as before to show that $p(x+c,y)$, and hence also $p$, is near-separated. If $\val a_s = 0$ and $a_s$ is just a single term, then $a_s$ is a constant, and $\deg a_s < \deg_x p$. Choosing $i_0$ such that $\deg a_{i_0} = \deg_x p$ and $c_i\in\mathbb{K}$ such that $\deg(a_i - c_i a_{i_0}) < \deg_x p$, we can argue as before to show that $a_i$ is a linear combination of $a_{i_0}$ and $a_s$ to conclude that $p$ is near-separated.
\end{proof}

We present another definition and another lemma before we come to the main theorem. The proof of Lemma~\ref{lem:closure_invariance} is taken literally from~\cite[Lemma~3.13.]{buchacher2020separating} and included here for convenience of the reader.
\begin{Definition}
Let $T$ be an invariant subset of $\mathbb{Z}_m\times \mathbb{Z}_n$. The \textbf{separated closure} of $T$ is
\begin{equation*}
T^{\mathrm{sep}} := \bigcap_{\substack{S\supseteq T\\ S \text{ sep}}} S.
\end{equation*}
\end{Definition}

\begin{Lemma}\label{lem:closure_invariance}
    Let $T \subseteq \mathbb{Z}_m \times \mathbb{Z}_n$ be invariant with respect to $G \subseteq \mathrm{S}_m \times \mathrm{S}_n$.
    Then $T^{\sep}$ is also $G$-invariant.
\end{Lemma}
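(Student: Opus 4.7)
My plan is to exploit the fact that $T^{\sep}$ is defined as an intersection over the family $\mathcal{F} := \{S \subseteq \mathbb{Z}_m \times \mathbb{Z}_n : S \supseteq T,\ S \text{ separated}\}$, and show directly that $\mathcal{F}$ is closed under the action of $G$. From this, $G$-invariance of $T^{\sep}$ follows by the standard fact that a bijection commutes with intersections: for any $\pi \in G$,
\begin{equation*}
\pi(T^{\sep}) = \pi\!\left(\bigcap_{S \in \mathcal{F}} S\right) = \bigcap_{S \in \mathcal{F}} \pi(S) = \bigcap_{S' \in \mathcal{F}} S' = T^{\sep}.
\end{equation*}

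The first step of the plan is to verify that separatedness is preserved by the action of $G \subseteq \mathrm{S}_m \times \mathrm{S}_n$. Writing $\pi = (\sigma,\tau)$ and $\pi(S) = \{(\sigma(i),\tau(j)) : (i,j)\in S\}$, one has the identity $\pi(S)_{\sigma(i),\ast} = \tau(S_{i,\ast})$. Because $\tau$ is a bijection of $\mathbb{Z}_n$, the dichotomy ``$S_{i,\ast} \cap S_{j,\ast} = \emptyset$ or $S_{i,\ast} = S_{j,\ast}$'' transfers verbatim to ``$\pi(S)_{\sigma(i),\ast} \cap \pi(S)_{\sigma(j),\ast} = \emptyset$ or $\pi(S)_{\sigma(i),\ast} = \pi(S)_{\sigma(j),\ast}$''. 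As $(\sigma(i),\sigma(j))$ ranges over $\mathbb{Z}_m \times \mathbb{Z}_m$ when $(i,j)$ does, $\pi(S)$ is separated whenever $S$ is.

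The second step uses the hypothesis that $T$ is $G$-invariant: if $S \supseteq T$, then $\pi(S) \supseteq \pi(T) = T$. Combining with the first step, $\pi$ maps $\mathcal{F}$ into itself, and since $\pi^{-1} \in G$ does the same, $\pi$ actually permutes $\mathcal{F}$. The third step is then the displayed chain of equalities above, where the crucial middle equality holds precisely because $\pi$ is a bijection of the underlying set $\mathbb{Z}_m \times \mathbb{Z}_n$, so it commutes with arbitrary intersections.

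There is no real obstacle here; the argument is purely set-theoretic. The only point that deserves a brief check is the first step, where one must confirm that the row-and-column permutation structure of elements of $G$ preserves the defining condition of separatedness — and this is immediate because the condition is phrased entirely in terms of equalities and disjointness of row-fibers, both of which are preserved by bijections.
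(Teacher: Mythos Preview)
Your proof is correct and rests on the same key observation as the paper's: an element $\pi=(\sigma,\tau)\in G$ sends separated sets to separated sets (via the identity $\pi(S)_{\sigma(i),\ast}=\tau(S_{i,\ast})$), and sends supersets of $T$ to supersets of $T$ by $G$-invariance of $T$. The packaging differs, however. The paper argues by contradiction: assuming $\pi(T^{\sep})\neq T^{\sep}$, it forms $S=T^{\sep}\cap\pi(T^{\sep})$, notes this is a separated superset of $T$ strictly contained in $T^{\sep}$, and contradicts minimality. Your argument is direct: you show $\pi$ permutes the family $\mathcal{F}$ of separated supersets of $T$ and then use that bijections commute with intersections. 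Your route has the small advantage that it never needs to invoke that $T^{\sep}$ itself is separated or that the intersection of two separated sets is separated (both true, but not explicitly verified in the paper); you only apply the separatedness-preservation to members of $\mathcal{F}$, which are separated by definition.
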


\begin{proof}
    Let $\pi = (\sigma, \tau) \in \mathrm{S}_m \times \mathrm{S}_n$, and let
    $S \subseteq \mathbb{Z}_m \times \mathbb{Z}_n$ be a separated set.
    Since $\pi(S)_{i, \ast} = \tau(S_{\sigma(i), \ast})$,
    we find that $\pi(S)$ is separated as well.

    Assume that $T^{\sep}$ is not $G$-invariant, that is, there exists a $\pi \in G$
    such that $\pi(T^{\sep}) \neq T^{\sep}$.
    As we have shown, $\pi(T^{\sep})$ is separated, hence so is $S := T^{\sep} \cap \pi(T^{\sep})$.
    Observe that, since $\pi(T^{\sep}) \neq T^{\sep}$, $S \subsetneq T^{\sep}$.
    Since $T$ is $G$-invariant, $T \subseteq \pi(T^{\sep})$, so $T \subseteq S$.
    This contradicts the minimality of $T^{\sep}$.
\end{proof}

We are finally able to prove the correctness Algorithm~\ref{alg:2}. 

\begin{Theorem}~\label{theorem:main}
Let $p\in\mathbb{K}[x,y]\setminus\left(\mathbb{K}[x]\cup \mathbb{K}[y]\right)$ be irreducible, and assume that $\mathrm{F}(p)$ is non-trivial, that is, not isomorphic to $\mathbb{K}$. Then Algorithm~\ref{alg:2} terminates on input $p$, and outputs a generator of $\mathrm{F}(p)$.
\end{Theorem}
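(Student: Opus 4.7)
The plan is to follow the minimal near-separated multiple guaranteed by Corollary~\ref{cor:minimal} through each stage of Algorithm~\ref{alg:2}. Since $\mathrm{F}(p)\not\cong\mathbb{K}$, Corollary~\ref{cor:minimal} furnishes $(f,g)\in\mathbb{K}(x)\times\mathbb{K}(y)$ with $\mathrm{F}(p)=\mathbb{K}((f,g))$ and $P:=f_ng_d-g_nf_d=q_np$ the minimal near-separated multiple of $p$; after the M\"obius-type adjustment explained at the beginning of Subsection~\ref{sub:poles}, we may assume that $\infty$ is a pole of $f$. The goal is to show that Algorithm~\ref{alg:1} returns the finite set $\mathcal{S}=\{(s_1,s_2)\in C:s_1\text{ pole of }f,\ s_2\text{ pole of }g\}$, that the multiplicity step recovers $(\mathrm{m}(s_1,f),\mathrm{m}(s_2,g))$ up to a single global scalar, and that the final linear system has a non-trivial solution which generates $\mathrm{F}(p)$.

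The divisibility identities of Subsection~\ref{sub:poles} show inductively that every pair added to $S$ lies in $\mathcal{S}$, so Algorithm~\ref{alg:1} terminates. For the reverse inclusion, I invoke the combinatorial picture of the Correctness subsection. Let $T_p\subseteq\mathbb{Z}_m\times\mathbb{Z}_n$ be the invariant subset of $p$ as a factor of $P$ (Lemma~\ref{lemma:factor}). The transitive action of $\mathrm{Gal}(\overline{\mathbb{K}(t)}/\mathbb{K}(t))$ on the Puiseux roots of $f-t$ and of $g-t$ (as in the proof of Lemma~\ref{lem:card}) together with $p\mid P$ give $\pi_1(T_p)=\mathbb{Z}_m$ and $\pi_2(T_p)=\mathbb{Z}_n$. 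Lemmas~\ref{lem:combinatorial_separatedness} and~\ref{lem:closure_invariance} identify the separable closure $T_p^{\sep}$ with the invariant set of the smallest near-separated multiple of $p$, namely $P$, forcing $T_p^{\sep}=\mathbb{Z}_m\times\mathbb{Z}_n$. A direct combinatorial description of $T^{\sep}$---as the disjoint union, over the connected components of the bipartite graph $G_{T_p}$ with vertex set $\mathbb{Z}_m\sqcup\mathbb{Z}_n$ and edges given by $T_p$, of the corresponding full rectangles---then forces $G_{T_p}$ to be connected. The pole graph (vertices: poles of $f$ and of $g$; edges: non-empty components $T_p^{s_1,s_2}$) is the contraction of $G_{T_p}$ obtained by merging rows (resp.\ columns) that share a Puiseux pole label, hence also connected. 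Each iteration of Algorithm~\ref{alg:1} traverses one edge of the pole graph in each direction, so the iteration, started from the non-empty initial set, exhausts $\mathcal{S}$.

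For the multiplicity step, Lemma~\ref{lemma:component} identifies each non-empty $T_p^{s_1,s_2}$ with the invariant set of $\mathrm{lp}_\omega(p)$ as a factor of $\mathrm{lp}_\omega(f)-\mathrm{lp}_\omega(g)$ for $\omega=\omega_{s_1,s_2}$; Proposition~\ref{prop:homog} and Section~\ref{section:hom} produce a generator $(f_\omega,g_\omega)$ of $\mathrm{F}(\mathrm{lp}_\omega(p))$, and Lemma~\ref{lem:nearSep} supplies $h_{s_1,s_2}\in\mathbb{K}(t)$ of degree $k_{s_1,s_2}$ with $h_{s_1,s_2}((f_\omega,g_\omega))=(\mathrm{lp}_\omega(f),\mathrm{lp}_\omega(g))$, so $\mathrm{m}(s_1,f)=k_{s_1,s_2}|\deg f_\omega|$ and the analogous formula holds for $g$. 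Matching multiplicities at shared poles gives exactly equations~\eqref{eq:mult}, whose solution space over $\mathbb{N}$ is one-dimensional by the connectivity established above. The actual multiplicities of $(f,g)$ must be the primitive generator of that ray: a common factor $c>1$ would make the ansatz solvable with those multiplicities divided by $c$, producing a strict divisor of $P$ that is still a near-separated multiple of $p$, contradicting the minimality of $P$. Consequently $(f,g)$ itself is a non-trivial solution of the linear system produced by the ansatz, so that system is consistent; and for any non-trivial solution $(f',g',q')$ the polynomial $f'_ng'_d-g'_nf'_d$ is a near-separated multiple of $p$ of the same Newton size as $P$, hence a scalar multiple of $P$ by minimality, whereupon Corollary~\ref{cor:minimal} concludes $\mathbb{K}((f',g'))=\mathrm{F}(p)$. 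The main obstacle throughout is the connectivity step in the second paragraph: it is what translates the global minimality of $P$ into the combinatorial connectedness that validates the local pole-discovery dynamics of Algorithm~\ref{alg:1}.
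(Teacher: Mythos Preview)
Your overall architecture matches the paper's: work inside the minimal near-separated multiple $P=f_ng_d-g_nf_d$, use the invariant set $T_p$ of Lemma~\ref{lemma:factor}, and exploit $T_p^{\sep}=\mathbb{Z}_m\times\mathbb{Z}_n$ (a consequence of minimality via Lemmas~\ref{lem:combinatorial_separatedness} and~\ref{lem:closure_invariance}) to force the pole-discovery dynamics to be exhaustive. Your bipartite-connectivity phrasing of the Algorithm~\ref{alg:1} argument is a clean restatement of the paper's ``block-decomposition $T=T_0\cup T_1$'' contradiction; the content is the same.

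The gap is in the multiplicity step. You assert that if the multiplicities of the minimal generator $(f,g)$ were a non-trivial multiple $c>1$ of the primitive solution of~\eqref{eq:mult}, then ``the ansatz [would be] solvable with those multiplicities divided by $c$, producing a strict divisor of $P$''. Nothing you have established justifies solvability of the ansatz at the lower level. The equations~\eqref{eq:mult} are only \emph{necessary} compatibility conditions; a positive integer solution of~\eqref{eq:mult} does not by itself produce a near-separated multiple of $p$, so you cannot manufacture a smaller one to contradict minimality. This is precisely the point where the paper does real work: it analyses, for each $k$, the invariant set $T_k$ assembled from the components $T_k^{s_1,s_2}$ of $\mathrm{lp}_\omega(p)$ inside $f_\omega^k-g_\omega^k$, and shows via the explicit Puiseux roots $\alpha_{k,i},\beta_{k,j}$ and the Galois automorphism $t^{1/(km_1m_2)}\mapsto e^{2\pi i/(km_1m_2)}t^{1/(km_1m_2)}$ that $T_k$ is, after a row/column permutation, the block diagonal $\diag_k(T_1)$. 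Hence $T_k^{\sep}=\diag_k(T_1^{\sep})$ is a proper subset of $\mathbb{Z}_{km'}\times\mathbb{Z}_{kn'}$ whenever $k>1$. Since the actual $T$ is one of the $T_k$ and has full separable closure, $k=1$, i.e.\ the multiplicities of $(f,g)$ are the primitive solution. Your argument needs this (or an equivalent) structural statement; the shortcut you wrote is circular.

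Your final paragraph is fine once the multiplicity step is fixed: any non-trivial solution of the ansatz yields a near-separated multiple of $p$ of bidegree at most $(m,n)$, hence a constant times $P$ by minimality, and then Corollary~\ref{cor:minimal} gives the conclusion.
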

\begin{proof}
Let $f_ng_d-g_nf_d$ be the minimal near-separated multiple of $p$ such that $\deg f_n > \deg f_d$, and let $T$ be the invariant set associated with them. By Lemma~\ref{lem:combinatorial_separatedness}, $T^{\mathrm{sep}}$, the separated closure of $T$, is all of $\mathbb{Z}_m\times\mathbb{Z}_n$.

In order to prove the correctness of Algorithm~\ref{alg:2}, we first draw our attention to Algorithm~\ref{alg:1}, which is used as a subroutine, and show that it does not miss any poles of $f$ and $g$. We will prove that if Algorithm~\ref{alg:1} finds a pole $s_1$ of $f$, and if $s_2$ is a pole of $g$ such that $T^{s_1,s_2} \neq \emptyset$, then it also finds $s_2$. This will also hold with the roles of $s_1$ and $s_2$ interchanged. This will then imply that, if Algorithm~\ref{alg:1} did find some but not all poles of $f$ and $g$, then $T$ were the union of two-nonempty sets $T_0$ and $T_1$, which, after a permutation of its rows and columns, can be assumed to be subsets of $\{0,1,\dots,m_0\}\times \{0,1,\dots,n_0\}$ and $\{m_0+1,\dots,m-1\}\times \{n_0+1,\dots,n-1\}$, respectively. This would then imply that $T^\mathrm{sep}$ equals $T_0^\mathrm{sep} \cup T_1^\mathrm{sep}$, a proper subset of $\mathbb{Z}_m\times\mathbb{Z}_n$, and thereby contradict the assumption that $f_ng_d - g_nf_d$ is the minimal separated multiple of $p$. 

Let $s_1$ be a pole of $f$ that has been determined by Algorithm~\ref{alg:1}, and let $s_2$ be a pole of $g$ such that $T^{s_1,s_2} \neq \emptyset$. We can assume that $s_1$ is either $0$ or $\infty$. We assume that it is $0$. The other case is treated analogously.  If $(i,j)\in T^{s_1,s_2}$, then $p(\alpha_i,\beta_j) = 0$, and therefore $\mathrm{lp}_\omega(p)(\overline{\alpha}_i,\overline{\beta}_j) = 0$ for $\omega = (\deg \alpha_i, \deg \beta_j)$. Since $\alpha_i$ and $\beta_j$ are different from zero, the leading part $\mathrm{lp}_\omega(p)$ involves at least two terms. Therefore, $\omega$ is an outward pointing normal of an edge of the Newton polygon of $p$. 
If $s_2 = \infty$, then $\mathrm{sgn}(w) = (-1,1)$, and so $\deg p(0,y) < \deg_y p$. If $s_2 \in\mathbb{K}$, then $\mathrm{sgn}(w)$ equals either $(-1,0)$ or $(-1,-1)$, depending on whether $s_2$ is zero or not. In any case, $s_2$ is a root of $p(0,y)$. Altogether, we see that Algorithm~\ref{alg:1} succeeds in finding $s_2$. 

We show that the multiplicities computed by Algorithm~\ref{alg:2} are indeed the multiplicities of the poles of $f$ and $g$.
Let $T^{s_1,s_2}$ be a non-empty component of $T$, and assume it corresponds to the invariant set associated with $\mathrm{lp}_\omega(p)$ and $\mathrm{lp}_\omega(f_n) \mathrm{lp}_\omega(g_d) - \mathrm{lp}_\omega(g_n)\mathrm{lp}_\omega(f_d)$ as explained in Lemma~\ref{lemma:component}. If $(f_\omega,  g_\omega)$ is a generator of $\mathrm{F}(\mathrm{lp}(p))$, then there is an integer $k$ such that $(\mathrm{lp}_\omega(f), \mathrm{lp}_\omega(g)) = (f_\omega^k, g_\omega^k)$. So the invariant set~$T_k^{s_1,s_2}$ associated with $\mathrm{lp}_\omega(p)$ and $f_\omega^k - g_\omega^k$ can be identified with $T^{s_1,s_2}$. If for each pair $(s_1,s_2)$ of poles, $k \equiv k(s_1,s_2)$ is chosen such that $T_k^{s_1,s_2}$ can be identified with $T^{s_1,s_2}$, then Lemma~\ref{lem:size} shows that the diagrams associated with $T_k^{s_1,s_2}$ need to be compatible in terms of size. This compatibility does not uniquely determine the $k$'s but gives rise to a $1$-parameter family of sets $T_k$ only one of which can be identified with $T$.
Again, since $f_ng_d - g_nf_d$ is the minimal near-separated multiple of~$p$, the separated closure of~$T$ equals $\mathbb{Z}_m\times \mathbb{Z}_n$. We claim that the only $k$ for which the separated closure of $T_k \subseteq \mathbb{Z}_{km'}\times\mathbb{Z}_{kn'}$ equals $\mathbb{Z}_{km'}\times\mathbb{Z}_{kn'}$ is $1$.
To see this, we will compare $T_1^{s_1, s_2}$ to $T^{s_1,s_2}_k$ and $T_1$ to $T_k$. Let
\begin{equation*}
\alpha_{k,i} = \mathrm{exp}\left(\frac{2\pi\mathrm{i} i}{km'}\right)a^{-\frac{1}{m'}} t^\frac{1}{km'}
\quad \text{and} \quad
\beta_{k,j} = \mathrm{exp}\left(\frac{2\pi \mathrm{i} j}{kn'}\right)b^{-\frac{1}{n'}} t^\frac{1}{kn'}
\end{equation*}
be the solutions of the auxiliary equations~$f_\omega^k - t = 0$ and $g_\omega^k - t =0$. Since
\begin{equation*}
t^\frac{1}{km'n'} \quad \mapsto \quad \mathrm{exp}\left(\frac{2\pi\mathrm{i}}{km'n'}\right) t^\frac{1}{km'n'}
\end{equation*}
is an element of $\mathrm{Gal}(\mathbb{K}(t^\frac{1}{km'n'})/\mathbb{K}(t))$, we see that $(\alpha_{k,i},\beta_{k,j})$ is a root of $\mathrm{lp}_\omega(p)$ if and only if $(\alpha_{k,i+1 \bmod km'},\beta_{k,j+1 \bmod kn'})$ is one. Furthermore, $(\alpha_{k,ki},\beta_{k,kj})$ is a root of $\mathrm{lp}_\omega(p)$ if and only if $(\alpha_{1,i},\beta_{1,j})$ is one, since
\begin{equation*}
\alpha_{k,ki}(t) = \alpha_{1,i}(t^{1/k}) \quad \text{and} \quad \beta_{k,kj}(t) = \beta_{1,j}(t^{1/k}).
\end{equation*}
From these two observations it is not too difficult to deduce that the permutation of $\mathbb{Z}_{km'} \times \mathbb{Z}_{kn'}$ given by
\begin{equation*}
(u_1 k + v_1, u_2 k + v_2) \mapsto (v_1 m' + u_1, v_2 n' +u_2),
\end{equation*}
where $u_1 \in \{0,\dots,m'-1\}$, $u_2\in\{0,\dots,n'-1\}$ and $v_1,v_2\in\{0,\dots,k-1\}$ permutes the rows and columns of $T_k^{s_1,s_2}$ such that the associated diagram is of block diagonal form with each block equal to the diagram associated with $T_1^{s_1,s_2}$. These permutations indexed by the pairs $(s_1,s_2)$ of poles of $(f,g)$ make up a permutation of the rows and columns of $T_k$ such that the diagram of the component associated with $(s_1,s_2)$ is of block diagonal form as above. These blocks again can be permuted such that the corresponding diagram is of block diagonal form with each block equal to the diagram associated with $T_1$. Let us write $\diag_k(T_1)$ for it, or more generally for a diagram in block diagonal form, consisting of $k$ blocks of a diagram $T_1$. Then the diagram of the separated closure is $\diag_k(T^{\mathrm{sep}})$. Obviously, if $k>1$, the separated closure of $T_k$ is a proper subset of $\mathbb{Z}_{km'}\times\mathbb{Z}_{kn'}$. Hence $T = T_1$, and the proof is finished.
 \end{proof}

\begin{Example}
In Example~\ref{ex:comp1} we computed the invariant sets associated with $x^2+y^{-1}$, $x^{-2}+y$, $x-y^{-1}$ and $x^{-1}+y^{-1}$ as factors of $x^2+y^{-1}$, $x^{-2}+y$, $x^2-y^{-2}$ and $x^{-2}-y^{-2}$, respectively, and observed that they are the components of the invariant set associated with $p=xy-1-y-x^2y-x^2y^2$ and $f_ng_d-g_nf_d = (1-x)^2(1+x+x^2)y(1+y)^2+(1+y+y^2)^2x^2$. The invariant sets associated with $x^2+y^{-1}$, $x^{-2}+y$, $x-y^{-1}$ and $x^{-1}+y^{-1}$ as factors of $x^4+y^{-2}$, $x^{-4}+y^2$, $x^4-y^{-4}$ and $x^{-4}-y^{-4}$, respectively, are depicted below.
\begin{center}
\begin{tikzpicture}[scale=0.35]
  \fill[blue!50!gray] (-0.5,-0.5) rectangle (1.5,3.5);
  \draw[xshift=-.5cm,yshift=-.5cm] (0,0) grid (2,4);
  \draw (0,4) node {\tiny$0$};
  \draw (1,4) node {\tiny$0$};
  \draw (-1,3) node {\tiny$\infty$};
  \draw (-1,2) node {\tiny$\infty$};
  \draw (-1,1) node {\tiny$\infty$};
  \draw (-1,0) node {\tiny$\infty$};
  \foreach\x/\y in {0/3, 0/1, 1/2, 1/0} \draw (\x,\y) node {$\bullet$};
   \footnotesize
 \draw (0.5,-.75) node[below] {\vbox{\clap{$\mathstrut  x^2+y^{-1}  \mid x^4+ y^{-2}$}\kern0pt}};
\end{tikzpicture}
\qquad
\qquad
\hfil
\begin{tikzpicture}[scale=0.35]
 \fill[yellow!50!gray] (-0.5,-0.5) rectangle (3.5,3.5);
  \draw[xshift=-.5cm,yshift=-.5cm] (0,0) grid (4,4);
    \foreach\x/\y in {0/4, 1/4, 2/4, 3/4} \draw (\x,\y) node {\tiny$-1$};
  \draw (-1,3) node {\tiny$\infty$};
  \draw (-1,2) node {\tiny$\infty$};
  \draw (-1,1) node {\tiny$\infty$};
  \draw (-1,0) node {\tiny$\infty$};
  \foreach\x/\y in {0/3, 1/2, 2/1, 3/0} \draw (\x,\y) node {$\bullet$};
   \footnotesize
 \draw (1.75,-.75) node[below] {\vbox{\clap{$\mathstrut x-y^{-1} \mid x^4-y^{-4}$}\kern0pt}};
\end{tikzpicture}
\qquad
\quad
\hfil
\begin{tikzpicture}[scale=0.35]
 \fill[purple!50!gray] (-0.5,-0.5) rectangle (3.5,3.5);
  \draw[xshift=-.5cm,yshift=-.5cm] (0,0) grid (4,4);
    \foreach\x/\y in {0/4, 1/4, 2/4, 3/4} \draw (\x,\y) node {\tiny$-1$};
  \draw (-1,3) node {\tiny$0$};
  \draw (-1,2) node {\tiny$0$};
  \draw (-1,1) node {\tiny$0$};
  \draw (-1,0) node {\tiny$0$};
  \foreach\x/\y in {0/3, 1/2, 2/1, 3/0} \draw (\x,\y) node {$\bullet$};
   \footnotesize
 \draw (1.5,-.75) node[below] {\vbox{\clap{$\mathstrut x^{-1}+y^{-1} \mid x^{-4}+y^{-4}$}\kern0pt}};
\end{tikzpicture}
\qquad
\quad
\hfil
\begin{tikzpicture}[scale=0.35]
  \fill[green!30!gray] (-0.5,-0.5) rectangle (1.5,3.5);
  \draw[xshift=-.5cm,yshift=-.5cm] (0,0) grid (2,4);
  \draw (0,4) node {\tiny$\infty$};
  \draw (1,4) node {\tiny$\infty$};
  \draw (-1,3) node {\tiny$0$};
  \draw (-1,2) node {\tiny$0$};
  \draw (-1,1) node {\tiny$0$};
  \draw (-1,0) node {\tiny$0$};
  \foreach\x/\y in {0/3, 0/1, 1/2, 1/0} \draw (\x,\y) node {$\bullet$};
   \footnotesize
 \draw (0.7,-.75) node[below] {\vbox{\clap{$\mathstrut  x^{-2} + y  \mid x^{-4}+ y^2$}\kern0pt}};
\end{tikzpicture}
\end{center}
They make up the invariant set $T_2\subseteq\mathbb{Z}_{8}\times\mathbb{Z}_{8}$ associated with $p$ and $f^2 - g^2$ (see the diagram to the left below). A permutation of the rows and columns of $T_2$ results in a diagram of block diagonal form with its two blocks corresponding to $T$ (see the diagram to the right below).

\begin{center}
\begin{tikzpicture}[scale=0.35]
 \fill[yellow!50!gray] (-0.5,-0.5) rectangle (3.5,3.5);
 \fill[purple!50!gray] (-0.5,3.5) rectangle (3.5,7.5);
 \fill[blue!50!gray] (3.5,-0.5) rectangle (5.5,3.5);
 \fill[lightgray] (5.5,-0.5) rectangle (7.5,3.5);
 \fill[lightgray] (3.5,3.5) rectangle (5.5,7.5);
 \fill[green!30!gray] (5.5,3.5) rectangle (7.5,7.5);
  \draw[xshift=-.5cm,yshift=-.5cm] (0,0) grid (8,8);
  \draw (-1,7) node {\tiny$0$} (3,8) node {\tiny$-1$};
  \draw (-1,6) node {\tiny$0$} (2,8) node {\tiny$-1$};
  \draw (-1,5) node {\tiny$0$} (1,8) node {\tiny$-1$};
  \draw (-1,4) node {\tiny$0$} (0,8) node {\tiny$-1$};
  \foreach\x/\y in {4/8, 5/8} \draw (\x,\y) node {\tiny$0$};
  \foreach\x/\y in {6/8, 7/8} \draw (\x,\y) node {\tiny$\infty$};
  \foreach\x/\y in {-1/0, -1/1, -1/2, -1/3} \draw (\x,\y) node {\tiny$\infty$};
  \foreach\x/\y in {3/0, 2/1, 1/2, 0/3} \draw (\x,\y) node {$\bullet$};
  \foreach\x/\y in {3/4, 2/5, 1/6, 0/7} \draw (\x,\y) node {$\bullet$};
  \foreach\x/\y in {4/3, 4/1, 5/2, 5/0} \draw (\x,\y) node {$\bullet$};
  \foreach\x/\y in {6/7, 6/5, 7/6, 7/4} \draw (\x,\y) node {$\bullet$};
\end{tikzpicture}
\hfil
\begin{tikzpicture}[scale=0.35]
 \fill[lightgray] (-0.5,-0.5) rectangle (3.5,3.5);
 \fill[lightgray] (3.5,3.5) rectangle (7.5,7.5);
 \fill[purple!50!gray] (-0.5,5.5) rectangle (1.5,7.5);
 \fill[lightgray] (1.5,5.5) rectangle (2.5,7.5);
 \fill[green!20!gray] (2.5,5.5) rectangle (3.5,7.5);
 \fill[yellow!50!gray] (-0.5,3.5) rectangle (1.5,5.5);
 \fill[blue!50!gray] (1.5,3.5) rectangle (2.5,5.5);
 \fill[lightgray] (2.5,3.5) rectangle (3.5,5.5);

 \fill[purple!50!gray] (3.5,1.5) rectangle (5.5,3.5);
 \fill[lightgray] (5.5,1.5) rectangle (6.5,3.5);
 \fill[green!20!gray] (6.5,1.5) rectangle (7.5,3.5);
 \fill[yellow!50!gray] (3.5,-0.5) rectangle (5.5,1.5);
 \fill[blue!50!gray] (5.5,-0.5) rectangle (7.5,1.5);
 \fill[lightgray] (6.5,-0.5) rectangle (7.5,1.5);

  \fill[purple!50!gray] (-0.5,1.5) rectangle (1.5,3.5);
 \fill[lightgray] (1.5,1.5) rectangle (2.5,3.5);
 \fill[green!20!gray] (2.5,1.5) rectangle (3.5,3.5);
 \fill[yellow!50!gray] (-0.5,-0.5) rectangle (1.5,1.5);
 \fill[blue!50!gray] (1.5,-0.5) rectangle (3.5,1.5);
 \fill[lightgray] (2.5,-0.5) rectangle (3.5,1.5);

  \fill[purple!50!gray] (3.5,5.5) rectangle (5.5,7.5);
 \fill[lightgray] (5.5,5.5) rectangle (6.5,7.5);
 \fill[green!20!gray] (6.5,5.5) rectangle (7.5,7.5);
 \fill[yellow!50!gray] (3.5,3.5) rectangle (5.5,5.5);
 \fill[blue!50!gray] (5.5,3.5) rectangle (7.5,5.5);
 \fill[lightgray] (6.5,3.5) rectangle (7.5,5.5);

  \draw[xshift=-.5cm,yshift=-.5cm] (0,0) grid (8,8);
  \draw (-1,7) node {\tiny$0$} (1,8) node {\tiny$-1$};
  \draw (-1,6) node {\tiny$0$} (0,8) node {\tiny$-1$};
  \draw (-1,5) node {\tiny$\infty$} (2,8) node {\tiny$0$};
  \draw (-1,4) node {\tiny$\infty$} (3,8) node {\tiny$\infty$};
  \draw (-1,3) node {\tiny$0$} (4,8) node {\tiny$-1$};
  \draw (-1,2) node {\tiny$0$} (5,8) node {\tiny$-1$};
  \draw (-1,1) node {\tiny$\infty$} (6,8) node {\tiny$0$};
  \draw (-1,0) node {\tiny$\infty$} (7,8) node {\tiny$\infty$};
  \foreach\x/\y in {0/7, 3/7, 1/6, 3/6, 0/5, 2/5, 1/4, 2/4, 4/3, 7/3, 5/2, 7/2, 4/1, 6/1, 5/0, 6/0} \draw (\x,\y) node {$\bullet$};
\end{tikzpicture}
\end{center}

\end{Example}

\section{Open problem}\label{section:questions}

We have discussed a semi-algorithm that takes as input an irreducible polynomial $p\in\mathbb{K}[x,y]\setminus\left(\mathbb{K}[x]\cup \mathbb{K}[y]\right)$ and outputs a generator of $\mathrm{F}(p)$ whenever it terminates. We have observed that it does terminate, when $p$ is near-separable, and we have shown that it may not if $p$ is not near-separable. 

We have seen several necessary conditions for a polynomial to be near-separable: the sign vectors of the outward pointing normals of any two distinct edges of its Newton polygon need to be different; the orbit of $\infty$ (and of any other point) is necessarily finite; its leading parts have to be near-separable; and the linear equations for the coefficients of the ansatz of a near-separated multiple need to have a non-trivial solution. The only condition that is difficult to verify is the (in)finiteness of the orbit of a point. Though~\cite[Remark~5.1]{hardouin2021differentially} provides an answer how this can be done it certain situations, it remains an open question how it can be addressed in general.

\begin{Problem}
Given $p\in\mathbb{K}[x,y]$, decide whether the orbit $\mathcal{O}$ of $\infty$ is (in)finite.
\end{Problem}

\section{Acknowledgements}

Thanks go to the Johann Radon Institute for Computational and Applied Mathematics of the Austrian Academy of Sciences at which the author was employed while part of this work was carried out. Thanks also go to the Johannes Kepler University Linz which supported this work with the grant LIT-2022-11-YOU-214.

\bibliographystyle{plain}
\bibliography{nearSeparationCleanUp}

\end{document}